\documentclass[11pt]{article}%
\usepackage{amsfonts}
\usepackage{amsmath}
\usepackage{amssymb}
\usepackage{graphicx}
\usepackage{epsfig}
\usepackage{enumerate}
\usepackage{color}
\usepackage{hyperref}

\setcounter{MaxMatrixCols}{30}
\providecommand{\U}[1]{\protect\rule{.1in}{.1in}}
\newtheorem{theorem}{Theorem}

\newtheorem{definition}[theorem]{Definition}

\newtheorem{lemma}[theorem]{Lemma}

\newtheorem{remark}[theorem]{Remark}

\setlength{\textwidth}{155mm} \setlength{\textheight}{217mm}
\setlength{\topmargin}{0mm} \setlength{\oddsidemargin}{2.5mm}
\setlength{\evensidemargin}{-2.5mm}
\newenvironment{proof}[1][Proof]{\noindent\textbf{#1.} }{\ \rule{0.5em}{0.5em}}

\renewcommand{\thefootnote}{\fnsymbol{footnote}}
\begin{document}

\author{Rainer Buckdahn$\;^{a,b}$, Tianyang Nie$\;^{a,c,\ast,1}$\bigskip\\{\small ~$^{a}$School of Mathematics, Shandong University, Jinan, Shandong
250100, China}\\{\small $^{b}$Laboratoire de Math\'{e}matiques, Universit\'{e} de Bretagne Occidentale,}\\{\small 29285 Brest C\'{e}dex 3, France}\\{\small ~$^{c}$School of Mathematics and Statistics, University of Sydney, NSW 2006, Australia}}
\title{Generalized Hamilton-Jacobi-Bellman equations with Dirichlet boundary and stochastic exit time optimal control problem}
\maketitle
\date{}

\begin{abstract}
We consider a kind of stochastic exit time optimal control problems, in which the cost function is defined through a nonlinear backward stochastic differential equation. We study the regularity of the value function for such a control problem. Then extending Peng's backward semigroup method, we show the dynamic programming principle. Moreover, we prove that the value function is  a viscosity solution to the following generalized Hamilton-Jacobi-Bellman equation with Dirichlet boundary:
\[
\left\{
\begin{array}
[c]{l}
\inf\limits_{v\in V}\left\{\mathcal{L}(x,v)u(x)+f(x,u(x),\nabla u(x) \sigma(x,v),v)\right\}=0, \quad x\in D,\medskip\\
u(x)=g(x),\quad x\in \partial D,
\end{array}
\right.
\]
where $D$ is a bounded set in $\mathbb{R}^{d}$, $V$ is a compact metric space in $\mathbb{R}^{k}$, and for $u\in C^{2}(D)$ and $(x,v)\in D\times V$,
\[\mathcal{L}(x,v)u(x):=\frac{1}{2}\sum_{i,j=1}^{d}(\sigma\sigma^{\ast})_{i,j}(x,v)\frac{\partial^{2}u}{\partial x_{i}\partial x_{j}}(x)
+\sum_{i=1}^{d}b_{i}(x,v)\frac{\partial u}{\partial x_{i}}(x).
\]
\end{abstract}
\footnotetext[1]{{\scriptsize Corresponding author.}}
\renewcommand{\thefootnote}{\arabic{footnote}}
\footnotetext[1]{{\scriptsize The work of this author is supported under Australian Research Council's Discovery Projects funding scheme (DP120100895)}}
\renewcommand{\thefootnote}{\fnsymbol{footnote}}
\footnotetext{\textit{{\scriptsize E-mail addresses:}}
{\scriptsize rainer.buckdahn@univ-brest.fr (Rainer\ Buckdahn); nietianyang@163.com (Tianyang\ NIE)}}

\textbf{AMS Subject Classification: }60H10, 60H30\medskip

\textbf{Keywords:} Stochastic exit time, Optimal control, Backward stochastic differential equations, Hamilton-Jacobi-Bellman equations, Viscosity solutions.

\section{Introduction}
Crandall and Lions introduced the notion of viscosity solution for first order partial differential equations (PDEs) in \cite{CL-83}, and then it was extended to second order PDEs by Lions \cite{L-1983}. In the later work \cite{CIL-92} Crandall et al. gave a systematic investigation of this notion. Viscosity solution provides a powerful tool to study second order PDEs and related problems.

It is by now well known that the classical Hamilton-Jacobi-Bellman (HJB) equation is connected to stochastic optimal control problem, see, e.g. \cite{FR-1975, K-1980}. The reader is referred to \cite{YZ-1999} for a systematic theory of HJB  equations and stochastic control.
For generalized HJB equations as
\[
\left\{
\begin{array}
[c]{l}
\frac{\partial u}{\partial t}+\inf\limits_{v\in V}\left\{\mathcal{L}(x,v)u+f(x,u,\nabla u \sigma(x,v),v)\right\}=0, \quad (t,x)\in (0,T)\times \mathbb{R}^{d},\medskip\\
u(T,x)=g(x),
\end{array}
\right.
\]
Peng \cite{P-1992} was the first to give a stochastic interpretation of the solution to above HJB equation; he did it by investigating a certain optimal control problem in which the cost function is described by a nonlinear backward stochastic differential equation (BSDE) based on the pioneering work of Pardoux and Peng \cite{PP-90}. Moreover, Peng \cite{P-1992} established the dynamic programming principle for the control problem and proved that the value function  is a viscosity solution to above generalized HJB equation. The results were extended by Peng \cite{P-97} with the help of the notion of backward semigroup. The reader is referred to \cite{BBP-97,BL-2008,MY-19992,N-2014,N-20142,PP-92} for further research. Recently, Dumitrescu et al. \cite{DQS-2014} studied combined optimal stopping and stochastic control problems with $\mathcal{E}^f$-expectations defined through BSDEs with jumps, and they investigated their connection with an obstacle problem for an HJB equation. Let us point out that the approach in \cite{DQS-2014} is different from Peng's method and allows the authors to prove, in the case when the reward terminal function is only Borelian, a weak dynamic programming principle.

Motivated by \cite{P-1992, P-97}, we study the following HJB equation with Dirichlet boundary:
\begin{equation}\label{HJB equation in introduction}
\left\{
\begin{array}
[c]{l}
\inf\limits_{v\in V}\left\{\mathcal{L}(x,v)u(x)+f(x,u(x),\nabla u(x) \sigma(x,v),v)\right\}=0, \quad x\in D,\medskip\\
u(x)=g(x),\quad x\in \partial D,
\end{array}
\right.
\end{equation}
where $D$ is a bounded set in $\mathbb{R}^{d}$. In particular, if $f=f(x,v)$, equation (\ref{HJB equation in introduction}) reduces to the Dirichlet problem for the  HJB equation  studied, for example, by Lions and Menaldi \cite{LM-1982}. In \cite{LM-1982}, it was shown that the optimal cost of a control problem belongs to $W^{1,\infty}(D)$ and it is the maximum solution of the HJB equation with Dirichlet boundary. For further research, the reader is referred to \cite{EF-1979, L-19791, L-19792}.

In this paper, we extend the results of \cite{LM-1982} to give a stochastic representation for the viscosity solution of the HJB equation (\ref{HJB equation in introduction}). To do this, we investigate the following stochastic exit time optimal control problem: Consider the stochastic differential equation (SDE)
\[
\left\{
\begin{array}
[c]{l}
dX_{s}^{0,x,v}=b(X_{s}^{0,x,v},v_{s})ds+\sigma(X_{s}^{0,x,v},v_{s})dB_{s}, \quad s\ge 0, \medskip\\
X_{0}^{0,x,v}=x\in\mathbb{R}^{d},
\end{array}
\right.
\]
where $B$ is an $\mathbb{R}^{m}$-valued Brownian motion, $b$ and $\sigma$ are given functions satisfying suitable assumptions, and $v=\{v_{s}\}$ is an admissible control taking values in a compact metric space $V\in\mathbb{R}^{k}$. Let $D$ be a bounded set of $\mathbb{R}^{d}$ and $\tau_{x,v}$ be the first exit time of $X^{0,x,v}$ from $\overline{D}$. To define our cost function, we introduce the nonlinear BSDE with random terminal time:
\[
Y_{t}^{0,x,v}=g(X_{\tau_{x,v}}^{0,x,v})+\int_{t\wedge\tau_{x,v}}^{\tau_{x,v}}f(X_{s}^{0,x,v},Y_{s}^{0,x,v},Z_{s}^{0,x,v},v_{s})ds
-\int_{t\wedge\tau_{x,v}}^{\tau_{x,v}}Z_{s}^{0,x,v}dB_{s},
\]
where $f$ and $g$ are given functions defined on $\mathbb{R}^{d}\times\mathbb{R}\times\mathbb{R}^{m}\times V$ and $\mathbb{R}^{d}$, respectively. The well-posedness of above BSDE was established first by Peng \cite{P-1991} and later extended by Darling and Pardoux \cite{DP-1997}; see also \cite{BH-1998,P-1998,R-2004}. Now we define the cost function $J(x,v):=Y_{0}^{0,x,v}$ and the value function $u(x):=\inf\limits_{v}J(x,v)$ for our stochastic exit time optimal control problem.

Our objective is to prove that the value function $u$ defined above is the viscosity solution of the HJB equation (\ref{HJB equation in introduction}). The first step is to show some regularity results for $u$. Let us first recall the results for the case $f=f(s,x,v)$. In general, when $D$ is bounded,  the continuity of $u$ is not always true, see \cite{KD-2001} page 278-279. Fleming and Soner \cite{FS-2006} found a sufficient conditions such that $u$ is continuous (see Theorem 2.1 \cite{FS-2006}) and Bayraktar et al. \cite{BSY-2011} weakened the assumptions of  \cite{FS-2006}. If $f=f(x,v)$ and $\sigma$ is non-degenerate, under some suitable assumptions on $D$, the Lipschitz continuity of $u$ was obtained by Lions and Menaldi \cite{LM-1982}. They also extended the results to the degenerate case in \cite{LM-19822}. We mention that the results of \cite{LM-1982} were generalized by \cite{BB-1995,BCI-2008,IL-1990,K-2009}  under weaker assumptions. In this paper, motivated by \cite{LM-1982},  we prove for non-degenerate $\sigma$, that our value function $u$ defined above is $\frac{1}{2}$-H\"{o}lder continuous. Since our value function is defined through a nonlinear BSDE with random terminal time, it is more general than that in \cite{LM-1982}. To show the regularity, we need the stability property of BSDE w.r.t. the perturbations, see the proof of our Theorem \ref{regularity of u}. Instead of the Lipschitz continuity as in \cite{LM-1982}, we get in our framework the $\frac{1}{2}$-H\"{o}lder continuity of $u$.

In a second step we study the dynamic programming principle (DPP). As by now well known, for $f=f(s,x,v)$, the DPP holds, see e.g. \cite{FS-2006} and \cite{MY-1999}.  For a cost function defined by a BSDE with deterministic terminal time, the DPP was first shown by Peng \cite{P-1992}. Then it was proven again by Peng \cite{P-97} using the method of backward semigroup. We emphasise that we cannot just follow the procedure of \cite{P-97} to prove the DPP for our value function $u$, because the terminal time of our BSDE (see (\ref{BSDE coupled with SDE})) is  the stochastic exit time of SDE (\ref{SDE}). This stochastic exit time depends not only on the initial date $x$ but also on the control process $v\in\mathcal{V}$. We have to establish the following relation (see Lemma \ref{Lemma for equivalent form of value function})
\[
u(x)=\inf\limits_{v\in\mathcal{V}}Y_{0}^{0,x,v}=\mbox{essinf}_{v\in\mathcal{V}}Y_{\Theta}^{\Theta,x,v},
\]
which is not obviously at all. To prove this, we introduce the time-shift operator and make a subtle analysis. For more details, see Section 4. With the help of above relation and Peng's backward semigroup method, we can show that the DPP is also satisfied, see Theorem \ref{DPP}.

In Section 5, using the regularity property of the value function $u$ and the dynamic programming principle, we can show that $u$ is the viscosity solution of the HJB equation (\ref{HJB equation in introduction}). We emphasise that the random terminal time makes the application of the procedure of Peng \cite{P-97} more complicate, and so we need a special subtle approach, see e.g. Lemma \ref{lemma 3 for proving viscosity solution}.

The paper is organised  as follows: In Section 2 we formulate the problem. We introduce our assumptions and recall existing essential results on BSDE with random terminal time. Section 3 is devoted to the study of the value function, and in particular, its regularity.  In Section 4 the dynamic programming principle is established. Section 5 is devoted to the proof that the function $u$ is a viscosity solution of the HJB equation (\ref{HJB equation in introduction}) and we also have the uniqueness of the viscosity solution for such HJB equation.

\section{Formulation of the problem}
Let $(\Omega,\mathcal{F},\mathbb{P})$ be the classical Wiener space: $\Omega:=C_{0}(\mathbb{R}_{+};\mathbb{R}^{m})$ is the set of all continuous functions from $\mathbb{R}_{+}$ to $\mathbb{R}^{m}$ starting from $0$, $\mathcal{F}$ is the Borel $\sigma$-algebra over $\Omega$, completed by the Wiener measure $\mathbb{P}$. In this probability space, the coordinate process $B_{s}(\omega)=\omega(s)$, $s\ge 0$, $\omega\in\Omega$, is an $\mathbb{R}^{m}$-valued Brownian motion. We denote by $\mathbb{F}:=\{\mathcal{F}_{t},t\ge0\}$ the filtration generated by the Brownian motion $B$ and augmented by $\mathcal{N}_{\mathbb{P}}$ (the class of $\mathbb{P}$-null sets of $\mathcal{F}$).

Through the paper, for $d,m\ge1$, we use the notations $|x|^{2}:=\sum\limits_{i=1}^{d}x_{i}^{2}$, for $x\in\mathbb{R}^{d}$, and $|A|^{2}:=\sum\limits_{i=1}^{d}\sum\limits_{j=1}^{m}a_{ij}^{2}$, for $A\in \mathbb{R}^{d\times m}$.

For $x\in\mathbb{R}^{d}$, we consider the following SDE with control:
\begin{equation}\label{SDE}
\left\{
\begin{array}
[c]{l}
dX_{s}^{0,x,v}=b(X_{s}^{0,x,v},v_{s})ds+\sigma(X_{s}^{0,x,v},v_{s})dB_{s}, \quad s\ge 0,\medskip\\
X_{0}^{0,x,v}=x\in\mathbb{R}^{d},
\end{array}
\right.
\end{equation}
where $v=\{v_{s}\}$ is an $\{\mathcal{F}_{s}\}$-adapted process taking its values in a compact set $V\subset \mathbb{R}^{k}$. The coefficients $b:\mathbb{R}^{d}\times V\rightarrow \mathbb{R}^{d}$ and $\sigma: \mathbb{R}^{d}\times V\rightarrow \mathbb{R}^{d\times m}$ are supposed to be continuous and to satisfy the following assumptions:
\begin{itemize}
\item[$\left(  H_{1}\right)  $] There exists a positive constant $L$ such that for all $x,x_{1},x_{2}\in\mathbb{R}^{d}$, $v\in V$,
\[
\begin{array}
[c]{rl}
(i) & |b(x_{1},v)-b(x_{2},v)|+|\sigma(x_{1},v)-\sigma(x_{2},v)|\leq
L|x_{1}-x_{2}|,\medskip\\
(ii)& |b(x,v)|+|\sigma(x,v)|\leq L(1+|x|).
\end{array}
\]
\end{itemize}

We denote by $\mathcal{V}$ the set of admissible control processes composed of all $V$-valued $\{\mathcal{F}_{s}\}$-progressively measurable processes. Then we know that under assumption $(H_{1})$, equation (\ref{SDE}) has a unique strong solution for each given $v\in\mathcal{V}$.

Let $D\subset\mathbb{R}^{d}$ be a bounded domain. For each $(x,v)\in D\times \mathcal{V}$, we define the first exit time $\tau_{x,v}$ of $X^{0,x,v}$ from the bounded domain $\overline{D}$:
\begin{equation}\label{exit time}
\tau_{x,v}:=\inf\{t\ge0: X_{t}^{0,x,v}\notin\overline{D}\}.
\end{equation}
From the right continuity of $\{\mathcal{F}_{s}\}$, we know that $\tau_{x,v}$ is a stopping time w.r.t. $\{\mathcal{F}_{s}\}$, see, e.g. Dyknin \cite{D-1965}.

Given $(x,v)\in\mathbb{R}^{d}\times \mathcal{V}$, let us consider the nonlinear BSDE with random terminal time :
\begin{equation}\label{BSDE coupled with SDE}
Y_{t}^{0,x,v}=g(X_{\tau_{x,v}}^{0,x,v})+\int_{t\wedge\tau_{x,v}}^{\tau_{x,v}}f(X_{s}^{0,x,v},Y_{s}^{0,x,v},Z_{s}^{0,x,v},v_{s})ds
-\int_{t\wedge\tau_{x,v}}^{\tau_{x,v}}Z_{s}^{0,x,v}dB_{s},
\end{equation}
where $f$ and $g$ are given functions satisfying the following assumptions:
\begin{itemize}

\item[$\left(H_{2}\right)$] The function $g: \mathbb{R}^{d}\rightarrow \mathbb{R}$ is continuous.

\item[$\left(H_{3}\right)$] $f:\mathbb{R}^{d}\times\mathbb{R}\times\mathbb{R}^{m}\times V\rightarrow\mathbb{R}$ is a continuous function which restriction on $\overline{D}\times\mathbb{R}\times\mathbb{R}^{m}\times V$ is such that, for some constants $L\ge0$, $\beta\ge0$ and $\alpha$ (positive or negative), such that, for all $x,x_{1},x_{2}\in \overline{D}$, $y,y_{1},y_{2}\in\mathbb{R}$, $z,z_{1}.z_{2}\in \mathbb{R}^{1\times m}$, $v\in V$,
    \[
    \begin{array}[c]{rl}
      (i) & |f(x,y,z,v)|\leq |f(x,0,z,v)|+L(1+|y|),  \medskip\\
      (ii)& |f(x_{1},y,z_{1},v)-f(x_{2},y,z_{2},v)|\leq \beta(|x_{1}-x_{2}|+|z_{1}-z_{2}|),\medskip\\
      (iii)& (y_{1}-y_{2})(f(x,y_{1},z,v)-f(x,y_{2},z,v))\leq -\alpha|y_{1}-y_{2}|^{2}.
      \end{array}
    \]
\end{itemize}
\begin{remark}\label{remark for boundedness of the coefficient}
From $(H_{1})$-$(H_{3})$ it follows easily  that the functions $b,\sigma,g$ and $f(\cdot,0,0,\cdot)$ are bounded in $\overline{D}\times V$.
\end{remark}

In addition to $(H_{1})$-$(H_{3})$ we need some technical assumptions:
\begin{itemize}

\item[$\left(H_{4}^{\prime}\right)$] For each $v\in\mathcal{V}$, the set of regular points
$\Gamma:=\left\{x\in\partial D: \mathbb{P}(\tau_{x,v}>0)=0\right\}$
is closed. Moreover, there exists some $\mu\in\mathbb{R}$, such that $\sup\limits_{x\in\overline{D},v\in \mathcal{V}}E[\exp(\mu\tau_{x,v})]<\infty.$
\item[$\left(H_{5}\right)$] For $\mu$ introduced in $(H_{4}^{\prime})$, we assume that $\mu>\gamma:=\beta^{2}-2\alpha$.
\end{itemize}

In our paper, we focus on the case that $\sigma$ is non-degenerate and $D$ satisfies a uniform exterior sphere condition, which means
\begin{itemize}
\item[$\left(H_{4}\right)$] (1) (Non-degeneracy) There exists a real number $\lambda>0$, s.t.
\[
\sum_{i,j=1}^{d}\left(\sigma\sigma^{\ast}(x,v)\right)_{ij}a_{i}a_{j}\ge \lambda|a|^{2}, \text{ for all } a\in\mathbb{R}^{d},\ x\in \overline{D } \text{ and } v\in V.
\]
(2)(Uniform exterior sphere condition) There exists a constant $\rho>0$, such that
\[
\text{ for all } y\in \partial D, \text{ there exists } \tilde{y}\in \mathbb{R}^{d}\setminus D, \text{ s.t. }
\overline{D}\cap\{z\in\mathbb{R}^{d}:|\tilde{y}-z|\leq \rho\}=\{y\}.
\]
\end{itemize}
\begin{remark}
Using the results of Khasminskii \cite{K-2012} or Lions and Menaldi \cite{LM-1982}, we know that $(H_{4})$ is stronger than $(H_{4}^{\prime})$. Indeed, $(H_{4})$ implies the existence of a positive $\mu$ such that $(H_{4}^{\prime})$ holds. For the readers' convenience, we give details in next section.
\end{remark}
Now we apply the results of Darling and Pardoux; see Theorem 3.4 \cite{DP-1997}, or Lemma \ref{Wellposedness Darling and Pardoux} below (For the readers' convenience, we recall some results of \cite{DP-1997} at the end of this section). Considering Remark \ref{remark for boundedness of the coefficient}, we have
\begin{theorem}\label{theorem for wellposedness of BSDE coupled with SDE}
Suppose $(H_{1})$-$(H_{5})$ (i.e. also $(H_4)$). Then, for each $x\in D$ and $v\in\mathcal{V}$, BSDE (\ref{BSDE coupled with SDE}) has a unique solution $(Y^{0,x,v},Z^{0,x,v})\in M_{\gamma}^{2}(0,\tau_{x,v};\mathbb{R})\times M_{\gamma}^{2}(0,\tau_{x,v};\mathbb{R}^{m})$.
Moreover, the solution belongs to
$M_{\mu}^{2}(0,\tau_{x,v};\mathbb{R})\times M_{\mu}^{2}(0,\tau_{x,v};\mathbb{R}^{m})$ and
$E[\sup\limits_{0\leq s\leq \tau_{x,v}}e^{\mu s}|Y^{0,x,v}_{s}|^{2}]<\infty$.
Here, for any real number $\theta$, any stopping time $\tau$, and any Euclidean space $U$, $M_{\theta}^{2}(0,\tau;U)$ denotes the Hilbert space of progressively measurable processes $\{\eta(s)\}$ s.t.
\[
\|\eta\|_{\theta}^{2}=E\left[\int_{0}^{\tau}e^{\theta s}|\eta(s)|^{2}ds\right]<\infty.
\]
\end{theorem}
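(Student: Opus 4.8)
The plan is to recognize BSDE (\ref{BSDE coupled with SDE}) as a backward equation with random terminal time $\tau_{x,v}$, terminal datum $\xi:=g(X^{0,x,v}_{\tau_{x,v}})$ and driver $F(s,\omega,y,z):=f(X^{0,x,v}_{s}(\omega),y,z,v_{s}(\omega))$, and then simply to verify that the hypotheses of the Darling--Pardoux result (Lemma \ref{Wellposedness Darling and Pardoux}) are met for the weight $\gamma=\beta^{2}-2\alpha$; the conclusion will then follow by quoting that lemma. Thus the real content of the proof is the checking of the data and driver conditions, which reduces to combining the boundedness furnished by Remark \ref{remark for boundedness of the coefficient} with the exponential moment bound of $(H_{4}^{\prime})$ and the gap inequality $(H_{5})$.

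First I would record the structural properties of $F$. The monotonicity assumption $(H_{3})$(iii) gives the monotonicity of $F$ in $y$ with constant $\alpha$, and $(H_{3})$(ii) gives its Lipschitz continuity in $z$ with constant $\beta$; since $f$ is continuous, $F$ is continuous in $(y,z)$. Combining $(H_{3})$(i) with $(H_{3})$(ii) yields the linear growth $|f(x,y,z,v)|\le |f(x,0,0,v)|+\beta|z|+L(1+|y|)$ on $\overline{D}$, whose constant part $f(\cdot,0,0,\cdot)$ is bounded there by Remark \ref{remark for boundedness of the coefficient}. These are exactly the $y$-monotonicity, $z$-Lipschitz, continuity and growth requirements entering Lemma \ref{Wellposedness Darling and Pardoux}, with the combined coefficient $\gamma=\beta^{2}-2\alpha$.

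Next I would verify the weighted integrability of the data. The key elementary observation is that, by the very definition (\ref{exit time}) of $\tau_{x,v}$, one has $X^{0,x,v}_{s}\in\overline{D}$ for every $s\le\tau_{x,v}$ and $X^{0,x,v}_{\tau_{x,v}}\in\partial D\subset\overline{D}$; consequently only the restriction of $f$ and $g$ to $\overline{D}\times V$ is relevant along the trajectory, and both are bounded there by Remark \ref{remark for boundedness of the coefficient}. Writing $\|g\|_{\infty}$ and $\|f(\cdot,0,0,\cdot)\|_{\infty}$ for the corresponding bounds, I obtain
\[
E\!\left[e^{\mu\tau_{x,v}}|\xi|^{2}\right]\le \|g\|_{\infty}^{2}\,E\!\left[e^{\mu\tau_{x,v}}\right],
\qquad
E\!\left[\int_{0}^{\tau_{x,v}} e^{\mu s}|f(X^{0,x,v}_{s},0,0,v_{s})|^{2}\,ds\right]\le \|f(\cdot,0,0,\cdot)\|_{\infty}^{2}\,E\!\left[\int_{0}^{\tau_{x,v}} e^{\mu s}\,ds\right],
\]
and both right-hand sides are finite because $(H_{4}^{\prime})$ guarantees $\sup_{x,v}E[e^{\mu\tau_{x,v}}]<\infty$. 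Since $(H_{5})$ gives $\mu>\gamma$, the same bounds hold a fortiori with the weight $e^{\gamma s}$ in place of $e^{\mu s}$ (distinguishing the cases $\gamma\le0$ and $0<\gamma<\mu$, and using $E[\tau_{x,v}]<\infty$ when $\gamma=0$), so the data lie in the $\gamma$-weighted spaces required by Lemma \ref{Wellposedness Darling and Pardoux}.

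With all hypotheses verified, Lemma \ref{Wellposedness Darling and Pardoux} yields a unique solution $(Y^{0,x,v},Z^{0,x,v})\in M_{\gamma}^{2}\times M_{\gamma}^{2}$, while the improved integrability $M_{\mu}^{2}\times M_{\mu}^{2}$ together with $E[\sup_{0\le s\le\tau_{x,v}}e^{\mu s}|Y^{0,x,v}_{s}|^{2}]<\infty$ comes from the a priori estimate part of that lemma, which is precisely where the strict gap $\mu>\gamma$ of $(H_{5})$ is used. The only genuinely delicate point, and the one I would treat most carefully, is this translation of the weighted-integrability conditions: one must check that the single exponential moment bound of $(H_{4}^{\prime})$, read against the sign of $\gamma$ and the inequality $\mu>\gamma$, simultaneously controls both the terminal value and the free term of the driver in the correct weighted norms, since otherwise the cited lemma cannot be invoked.
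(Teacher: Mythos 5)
Your proposal is correct and follows exactly the route the paper takes: the paper proves this theorem simply by invoking the Darling--Pardoux result (Lemma \ref{Wellposedness Darling and Pardoux}) together with Remark \ref{remark for boundedness of the coefficient}, and your argument is just a careful spelling-out of the hypothesis-checking that the paper leaves implicit (structural conditions on the driver from $(H_{3})$, boundedness of $g$ and $f(\cdot,0,0,\cdot)$ along the stopped trajectory, the exponential moment from $(H_{4}^{\prime})$, and the gap $\mu>\gamma$ from $(H_{5})$). The only superfluous step is your separate verification of the $\gamma$-weighted integrability of the data: the cited lemma only requires the $\mu$-weighted condition and already delivers the solution in both $M_{\gamma}^{2}$ and $M_{\mu}^{2}$.
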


Now let us introduce the cost function of our stochastic exit time control problem. Motivated by Peng \cite{P-1991,P-1992}, we define our recursive cost functional on $\mathbb{R}^{d}\times \mathcal{V}$ as
\[
J(x,v):=Y_{0}^{0,x,v}=E\left[g(X_{\tau_{x,v}}^{0,x,v})+\int_{0}^{\tau_{x,v}}f(X_{s}^{0,x,v},Y_{s}^{0,x,v},Z_{s}^{0,x,v},v_{s})ds\right],\quad
(x,v)\in\mathbb{R}^{d}\times \mathcal{V},
\]
where $(Y^{0,x,v},Z^{0,x,v})$ is the unique solution of BSDE (\ref{BSDE coupled with SDE}),
and we introduce the value function as
\begin{equation}\label{value function}
u(x):=\inf\limits_{v\in\mathcal{V}}J(x,v)=\inf\limits_{v\in\mathcal{V}}Y_{0}^{0,x,v},\quad x\in\mathbb{R}^{d}.
\end{equation}

One of our main objectives is to show that the value function $u$ defined above is a viscosity solution of the following generalised Hamilton-Jacobi-Bellman equation with Dirichlet boundary:
\[
\left\{
\begin{array}
[c]{l}
\inf\limits_{v\in V}\left\{\mathcal{L}(x,v)u(x)+f(x,u(x),\nabla u(x) \sigma(x,v)),v\right\}=0, \quad x\in D,\medskip\\
u(x)=g(x),\quad x\in \partial D,
\end{array}
\right.
\]
where, for $u\in C^{2}(D)$ and $(x,v)\in D\times V$,
\[
\mathcal{L}(x,v)u(x):=\frac{1}{2}\sum_{i,j=1}^{d}(\sigma\sigma^{\ast})_{i,j}(x,v)\frac{\partial^{2}u}{\partial x_{i}\partial x_{j}}(x)
+\sum_{i=1}^{d}b_{i}(x,v)\frac{\partial u}{\partial x_{i}}(x).
\]
For this end, we will first investigate the regularity of $u$; see Section 3.

Finally, at the end of this section, we recall some essential results of \cite{DP-1997}. Let us first recall the following well-posedness results for BSDEs with random terminal time; see Theorem 3.4 \cite{DP-1997}:
\begin{lemma}\label{Wellposedness Darling and Pardoux}
Let $\tau$ be an $\{\mathcal{F}_{s}\}$-stopping time and $\xi$ be an $\mathcal{F}_{\tau}$-measurable random variable in $\mathbb{R}^{n}$. Let $h:\Omega\times\mathbb{R}_{+}\times\mathbb{R}^{n}\times\mathbb{R}^{n\times m}\rightarrow\mathbb{R}^{n}$ be a function satisfying the following assumptions:
\begin{itemize}
\item[$\left(A_{1}\right)$] The map $y\mapsto h(s,y,z)$ is continuous. There exist constants $L\ge0$, $\beta\ge0$, $\alpha$ (positive or negative) s.t., for all $y,y_{1},y_{2}\in\mathbb{R}$, $z,z_{1}.z_{2}\in \mathbb{R}^{n\times m}$, a.s.,
    \[
    \begin{array}[c]{rl}
      (i) & |h(s,y,z)|\leq |h(s,0,z)|+L(1+|y|), \medskip\\
      (ii)& |h(s,y,z_{1})-h(s,y,z_{2})|\leq \beta|z_{1}-z_{2}|,\medskip\\
      (iii)& \langle y_{1}-y_{2},h(s,y_{1},z)-h(s,y_{2},z)\rangle\leq -\alpha|y_{1}-y_{2}|^{2}.
      \end{array}
    \]
\end{itemize}
We also assume that, for some $\mu>\gamma=\beta^{2}-2\alpha$,
\[
E\left[e^{\mu\tau}(|\xi|^{2}+1)+\int_{0}^{\tau}e^{\mu s}|h(s,0,0)|^{2}ds\right]<\infty.
\]
Then there exists a unique solution $(Y,Z)\in M_{\gamma}^{2}(0,\tau;\mathbb{R}^{n})\times M_{\gamma}^{2}(0,\tau;\mathbb{R}^{n\times m})$ of the BSDE:
\begin{equation}\label{general BSDE with random terminal time}
Y_{t}=\xi+\int_{t\wedge\tau}^{\tau}h(s,Y_{s},Z_{s})ds
-\int_{t\wedge\tau}^{\tau}Z_{s}dB_{s}, \quad t\ge0.
\end{equation}
Moreover, this solution belongs to
$M_{\mu}^{2}(0,\tau;\mathbb{R}^{n})\times M_{\mu}^{2}(0,\tau;\mathbb{R}^{n\times m})$, and
$E[\sup\limits_{0\leq s\leq \tau}e^{\mu s}|Y_{s}|^{2}]<\infty$.
\end{lemma}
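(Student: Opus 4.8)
The statement is the Darling--Pardoux well-posedness theorem for a monotone BSDE on a random, possibly unbounded, horizon, and the plan is to prove it by the weighted-space method: first the a priori estimate and uniqueness, then existence for generators that are in addition Lipschitz in $y$, and finally the genuinely monotone case by approximation, closing with the improved integrability.

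The engine of the whole argument is It\^o's formula applied to $s\mapsto e^{\theta s}|Y_s|^2$ on the stochastic interval $[t\wedge\tau,\tau]$, using $Y_\tau=\xi$ and $dY_s=-h(s,Y_s,Z_s)\,ds+Z_s\,dB_s$ on $\{s<\tau\}$. Since $\tau$ may be unbounded, each such computation must be localized by a sequence of stopping times $\tau_k\uparrow\tau$ reducing the local martingale $\int e^{\theta s}\langle Y_s,Z_s\,dB_s\rangle$, after which one passes to the limit $k\to\infty$ using the integrability hypothesis $E[e^{\mu\tau}(|\xi|^2+1)+\int_0^\tau e^{\mu s}|h(s,0,0)|^2\,ds]<\infty$. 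The algebraic core is the pointwise bound $2\langle Y_s,h(s,Y_s,Z_s)\rangle\le -2\alpha|Y_s|^2+2\beta|Y_s||Z_s|+2|Y_s||h(s,0,0)|$, obtained by writing $h(s,Y_s,Z_s)$ as $h(s,Y_s,Z_s)-h(s,0,Z_s)$ (controlled by the monotonicity $(A_1)$(iii)), plus $h(s,0,Z_s)-h(s,0,0)$ (controlled by the Lipschitz bound $(A_1)$(ii)), plus $h(s,0,0)$; Young's inequality $2\beta|Y_s||Z_s|\le\beta^2|Y_s|^2+|Z_s|^2$ then lets the produced $|Z_s|^2$ cancel the quadratic-variation term from It\^o. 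Carrying this out with weight $\theta=\mu>\gamma=\beta^2-2\alpha$ leaves a strictly negative zero-order coefficient $(\gamma-\mu)|Y_s|^2$ which, together with a further Young splitting of the $h(s,0,0)$ term, yields the quantitative estimate controlling $\|Y\|_\mu^2+\|Z\|_\mu^2$ by the data. For uniqueness I would run the identical computation on the difference $(\hat Y,\hat Z)=(Y-Y',Z-Z')$ of two solutions with the same terminal value, now with weight $\theta=\gamma$: the inhomogeneous terms disappear and the zero-order coefficient is exactly $\gamma+2\alpha-\beta^2=0$, so the integrand is nonnegative and $E[e^{\gamma(t\wedge\tau)}|\hat Y_{t\wedge\tau}|^2]\le0$, forcing $\hat Y\equiv0$ and then $\hat Z\equiv0$.

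For generators that are in addition globally Lipschitz in $y$, I would build the solution by a contraction argument carried out directly on $[0,\tau]$ in $M_\mu^2(0,\tau)$, with no truncation of $\tau$. Given $(U,W)\in M_\mu^2$, the frozen generator $g_s:=h(s,U_s,W_s)$ again lies in $M_\mu^2$ by $(A_1)$(i)--(ii); I would form the square-integrable martingale $N_t=E[\,\xi+\int_0^\tau g_s\,ds\mid\mathcal{F}_t\,]$, extract the integrand $Z$ from the martingale representation theorem, and set $Y_{t\wedge\tau}=E[\,\xi+\int_{t\wedge\tau}^\tau g_s\,ds\mid\mathcal{F}_{t\wedge\tau}\,]$, which defines a map $\Phi(U,W)=(Y,Z)$ whose fixed points are exactly the solutions. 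The a priori estimate of the previous step, applied to the difference of two images, shows that some iterate of $\Phi$ is a strict contraction on $M_\mu^2$, the dissipativity gap $\mu-\gamma>0$ furnishing the contraction factor, so Banach's theorem gives a unique solution in the Lipschitz case. For the genuinely monotone $h$ (continuous but not Lipschitz in $y$), I would regularize in the $y$-variable to produce generators $h_k$ that are Lipschitz in $y$ and still satisfy $(A_1)$(i)--(iii) with the same constants $L,\beta,\alpha$, for instance through the Yosida-type approximation associated with the monotone map $y\mapsto-h(s,y,z)$. Each approximate equation is solved by the Lipschitz step, yielding $(Y^k,Z^k)$, and the a priori estimates bound these uniformly in $k$ in $M_\mu^2$. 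Applying the weighted It\^o computation to the differences $(Y^k-Y^\ell,Z^k-Z^\ell)$ and using $h_k\to h$ (locally uniformly in $y$, by construction and the assumed continuity) shows $(Y^k,Z^k)$ is Cauchy in $M_\gamma^2$; the limit $(Y,Z)$ then solves the equation, the nonlinear term passing to the limit by the continuity of $y\mapsto h$ together with the strong convergence. Finally, to obtain the stated membership in $M_\mu^2$ and the pathwise estimate $E[\sup_{0\le s\le\tau}e^{\mu s}|Y_s|^2]<\infty$, I would re-run the weighted It\^o formula at weight $\mu$ on the solution itself and apply the Burkholder--Davis--Gundy inequality to the martingale part.

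The main obstacle throughout is the random and possibly unbounded terminal time: every use of It\^o's formula, of the martingale representation, and of the fixed-point map must be justified by localization in the $\tau$-stopping, with all limits controlled solely by the exponential-weight integrability of the data. The structural fact that makes the scheme close is the strict inequality $\mu>\gamma=\beta^2-2\alpha$, which turns the borderline cancellation underlying uniqueness into a strictly dissipative zero-order term, thereby supplying both the contraction constant in the Lipschitz step and the uniform, transferable bounds needed in the monotone limit; a secondary delicate point is identifying the limiting equation in the approximation step from continuity and monotonicity in $y$ alone, rather than from Lipschitz continuity.
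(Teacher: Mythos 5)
The paper does not prove Lemma \ref{Wellposedness Darling and Pardoux} at all: it is recalled verbatim from Darling and Pardoux \cite{DP-1997} (their Theorem 3.4), so there is no internal proof to measure your attempt against. Judged against the cited source and the follow-up literature \cite{BH-1998,R-2004}, your proposal reconstructs the standard architecture correctly: the weighted It\^{o} computation on $e^{\theta s}|Y_s|^2$ with the three-term decomposition of $h$ through $(A_1)$(iii) and $(A_1)$(ii), uniqueness at the borderline weight $\theta=\gamma$, a fixed point in $M^2_{\mu}$ for $y$-Lipschitz generators, an approximation for the merely monotone case, and BDG for the final sup-estimate.

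Two genuine gaps remain, however, relative to the statement as given. First, the hypothesis only requires $\mu>\gamma=\beta^2-2\alpha$, and since $\gamma$ may be negative, $\mu$ may be zero or negative; then $E[e^{\mu\tau}]<\infty$ does not force $\tau<\infty$ a.s., and your fixed-point map $\Phi$ is not well defined: the random variable $\xi+\int_0^\tau g_s\,ds$ need not lie in $L^2$, because the bound $\int_0^\tau |g_s|\,ds\le\bigl(\int_0^\tau e^{-\mu s}\,ds\bigr)^{1/2}\bigl(\int_0^\tau e^{\mu s}|g_s|^2\,ds\bigr)^{1/2}$ requires $\mu>0$. Darling and Pardoux circumvent this by solving BSDEs on deterministic horizons $[0,n]$ with generator $1_{\{s\le\tau\}}h$ and suitably projected terminal data, then showing the sequence is Cauchy in the weighted norms; your single global martingale-representation contraction is valid only in the regime $\mu>0$ (which, to be fair, is the regime this paper actually exploits, cf.\ Lemma \ref{lemma for assumption}, but it does not prove the lemma in its stated generality). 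Second, in the monotone step your claim that $(Y^k,Z^k)$ is Cauchy in $M^2_\gamma$ ``using $h_k\to h$ locally uniformly'' is under-justified: the weighted It\^{o} computation applied to $Y^k-Y^\ell$ leaves the error term $E\bigl[\int_0^\tau e^{\gamma s}\langle Y^k_s-Y^\ell_s,\,h_k(s,Y^k_s,Z^k_s)-h_\ell(s,Y^k_s,Z^k_s)\rangle\,ds\bigr]$, and local uniform convergence only helps on events where $Y^k$ stays in a compact set, so one needs uniform pathwise or higher-moment bounds on $Y^k$ together with a truncation argument --- precisely where the published proofs invest their technical effort. Two smaller repairs: with Young's inequality at the exact weight, $2\beta|Y||Z|\le\beta^2|Y|^2+|Z|^2$, the $|Z|^2$ term cancels identically, so to actually bound $\|Z\|_\mu$ you must take $2\beta|Y||Z|\le\lambda^{-1}\beta^2|Y|^2+\lambda|Z|^2$ with $\lambda<1$, which the strict inequality $\mu>\gamma$ permits; and in the uniqueness argument the localized inequality $E[e^{\gamma(t\wedge\tau_k)}|\hat Y_{t\wedge\tau_k}|^2]\le E[e^{\gamma\tau_k}|\hat Y_{\tau_k}|^2]$ only closes once you show $E[e^{\gamma\tau_k}|\hat Y_{\tau_k}|^2 1_{\{\tau_k<\tau\}}]\to0$ along a subsequence, which for solutions known merely to lie in $M^2_\gamma$ is a Fatou-type extraction rather than automatic.
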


Let us also recall the stability w.r.t. perturbations and the comparison theorem for BSDEs with random terminal time; see Proposition 4.4 and Corollary  4.4.2 \cite{DP-1997}. Here we adopt the convention that
$Y_{s}=Y_{\tau}=\xi$, $Z_{s}=0$ and $f(s,y,z)=0$ on $\{s>\tau\}$.

\begin{lemma}\label{Stability w.r.t. perturbation}
Suppose the triples $(\tau,\xi,h)$ and $(\tau^{\prime},\xi^{\prime},h^{\prime})$ satisfy the conditions in Lemma \ref{Wellposedness Darling and Pardoux} with the same $\alpha$, $\beta$ and $\mu>\beta^{2}-2\alpha$. Then, for all $\beta^{2}-2\alpha<\theta\leq\mu$, for the unique solution $(Y,Z)\in M_{\mu}^{2}(0,\tau;\mathbb{R}^{n})\times M_{\mu}^{2}(0,\tau;\mathbb{R}^{n\times m})$ (resp., $(Y^{\prime},Z^{\prime})\in M_{\mu}^{2}(0,\tau^{\prime};\mathbb{R}^{n})\times M_{\mu}^{2}(0,\tau^{\prime};\mathbb{R}^{n\times m})$) of BSDE (\ref{general BSDE with random terminal time}) related to $(\tau,\xi,h)$ (resp., $(\tau^{\prime},\xi^{\prime},h^{\prime})$), if we denote $\Delta Y=Y-Y^{\prime}$ and $\Delta Z=Z-Z^{\prime}$, we have that
\[
\begin{array}[c]{ll}
|\Delta Y(0)|^{2}+C_{1}E\left[\displaystyle\int_{0}^{\tau\vee\tau^{\prime}}e^{\theta s}\left(|\Delta Y(s)|^{2}+|\Delta Z(s)|^{2}\right)ds\right] \medskip\\
\leq E\left[\left|\xi e^{\theta\tau/2}-\xi^{\prime}e^{\theta\tau^{\prime}/2}\right|^{2}\right]
+C_{2}^{-1}E\left[\displaystyle\int_{0}^{\tau\vee\tau^{\prime}}e^{\theta s}\left|h(s,Y(s),Z(s))-h^{\prime}(s,Y(s),Z(s))\right|^{2}ds\right].
\end{array}
\]
Here $C_1,C_2>0$ are constant depending on the constants introduced in assumption $(A_1)$.
\end{lemma}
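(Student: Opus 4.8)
The plan is to run It\^o's formula on an exponentially weighted square of the difference and to exploit the monotonicity and Lipschitz structure in $(A_{1})$(ii)--$(A_{1})$(iii) to produce a coercive estimate; the standing hypothesis $\theta>\beta^{2}-2\alpha$ is exactly what makes the resulting quadratic form sign--definite and thereby generates the constants $C_{1},C_{2}>0$ in terms of $\alpha,\beta$ (and $\theta$). Before starting I would invoke Lemma~\ref{Wellposedness Darling and Pardoux}: both pairs $(Y,Z)$ and $(Y^{\prime},Z^{\prime})$ exist, are unique, and satisfy the integrability $(Y,Z)\in M_{\mu}^{2}\times M_{\mu}^{2}$ together with $E[\sup_{s}e^{\mu s}|Y_{s}|^{2}]<\infty$ (and symmetrically for the primed pair). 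Since $\theta\le\mu$, this is the integrability that will later legitimise discarding the stochastic integrals in expectation.

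First I would write, using the stated convention ($Y_{s}=\xi$, $Z_{s}=0$, $h(s,\cdot,\cdot)=0$ on $\{s>\tau\}$, and symmetrically for the primed data), the dynamics of $\Delta Y$ in the form $d(\Delta Y_{s})=-\Phi_{s}\,ds+\Delta Z_{s}\,dB_{s}$ with $\Phi_{s}=h(s,Y_{s},Z_{s})-h^{\prime}(s,Y^{\prime}_{s},Z^{\prime}_{s})$. The algebraic heart of the argument is the three--term splitting
\begin{equation*}
\Phi_{s}=\bigl[h(s,Y_{s},Z_{s})-h^{\prime}(s,Y_{s},Z_{s})\bigr]+\bigl[h^{\prime}(s,Y_{s},Z_{s})-h^{\prime}(s,Y^{\prime}_{s},Z_{s})\bigr]+\bigl[h^{\prime}(s,Y^{\prime}_{s},Z_{s})-h^{\prime}(s,Y^{\prime}_{s},Z^{\prime}_{s})\bigr].
\end{equation*}
The middle bracket is controlled by the monotonicity $(A_{1})$(iii) of $h^{\prime}$, giving $\langle\Delta Y_{s},\cdot\rangle\le-\alpha|\Delta Y_{s}|^{2}$; the last bracket is controlled by the Lipschitz property $(A_{1})$(ii) of $h^{\prime}$, giving the bound $\beta|\Delta Z_{s}|$; and the first bracket is precisely the perturbation $h(s,Y_{s},Z_{s})-h^{\prime}(s,Y_{s},Z_{s})$ evaluated at the solution $(Y,Z)$ of the first equation, which is the integrand appearing on the right--hand side of the claim. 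This ordering of the decomposition is what forces the perturbation to be measured at $(Y,Z)$ rather than at $(Y^{\prime},Z^{\prime})$.

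Applying It\^o's formula to the weighted square and collecting terms, after taking expectations the identity reads $|\Delta Y(0)|^{2}+E\int e^{\theta s}\bigl[\theta|\Delta Y_{s}|^{2}-2\langle\Delta Y_{s},\Phi_{s}\rangle+|\Delta Z_{s}|^{2}\bigr]\,ds=E[\text{terminal}]$, where the $|\Delta Z_{s}|^{2}$ comes from the quadratic variation and the martingale $\int e^{\theta s}\langle\Delta Y_{s},\Delta Z_{s}\,dB_{s}\rangle$ has zero mean by the integrability from Lemma~\ref{Wellposedness Darling and Pardoux} (rigorously one localises by stopping times and passes to the limit, the uniform integrability following from $E[\sup_{s}e^{\mu s}|Y_{s}|^{2}]<\infty$ and the $M_{\mu}^{2}$ bounds on $Z,Z^{\prime}$, using $\theta\le\mu$). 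Inserting the splitting, the monotone term contributes $+2\alpha|\Delta Y_{s}|^{2}$, which combines with the weight's $\theta|\Delta Y_{s}|^{2}$; the Lipschitz cross--term $2\beta|\Delta Y_{s}||\Delta Z_{s}|$ is absorbed by Young's inequality against $|\Delta Z_{s}|^{2}$, and the perturbation cross--term by a second Young split. Positivity of the coefficient of $|\Delta Y_{s}|^{2}$ is exactly the requirement $\theta+2\alpha-\beta^{2}>0$, i.e.\ $\theta>\beta^{2}-2\alpha$; this fixes $C_{1}$ (the common lower bound on the $|\Delta Y|^{2}$ and $|\Delta Z|^{2}$ coefficients) and $C_{2}$ (from the Young constant on the perturbation).

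The main obstacle, and the reason the statement does not reduce to the classical fixed--horizon stability estimate, is the presence of two distinct random terminal times $\tau\neq\tau^{\prime}$: one cannot integrate a single It\^o identity to a common deterministic horizon, and the specific boundary term $E[\,|\xi e^{\theta\tau/2}-\xi^{\prime}e^{\theta\tau^{\prime}/2}|^{2}]$ discounts each datum to its \emph{own} exit time rather than to $\tau\vee\tau^{\prime}$. My approach would be to run the weighted It\^o computation above only up to the common time $\tau\wedge\tau^{\prime}$, where both equations are genuinely active and the coercive estimate applies cleanly, and then to treat the residual interval $(\tau\wedge\tau^{\prime},\tau\vee\tau^{\prime}]$ --- on which only one process still evolves, the other being frozen at its terminal value by the convention --- by the single--equation a priori estimate for whichever process remains active; combining the two contributions and completing squares is what produces the asymmetric terminal term. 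Verifying that the coercive estimate is not destroyed on this mismatch region, and that all martingale arguments survive under the random horizon (where the exponential--moment control underlying Lemma~\ref{Wellposedness Darling and Pardoux} is essential), is the delicate bookkeeping step; this is precisely the content of Proposition~4.4 of Darling and Pardoux, whose argument I would follow.
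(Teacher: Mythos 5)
Your proposal is correct and follows essentially the same route as the paper, which offers no proof of its own but recalls this lemma directly from Proposition 4.4 (and Corollary 4.4.2) of Darling and Pardoux \cite{DP-1997} --- precisely the argument you sketch and explicitly defer to for the delicate two-terminal-times bookkeeping. Your outline of the mechanism (It\^o's formula on the exponentially weighted square, the three-term splitting of the driver measured at $(Y,Z)$, Young absorption made coercive by $\theta>\beta^{2}-2\alpha$, localization justified by the $M_{\mu}^{2}$ bounds with $\theta\leq\mu$, and the own-exit-time discounting that produces the terminal term $E\left[\left|\xi e^{\theta\tau/2}-\xi^{\prime}e^{\theta\tau^{\prime}/2}\right|^{2}\right]$) is consistent with that source.
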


\begin{lemma}\label{comparison theorem}
Under the assumptions of Lemma \ref{Stability w.r.t. perturbation}, for the case $n=1$, $\tau=\tau^{\prime}$, $h\leq h^{\prime}$ and $\xi\leq \xi^{\prime}$, we have $Y(t)\leq Y^{\prime}(t)$, a.s.
\end{lemma}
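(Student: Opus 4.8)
The plan is to prove the statement by a sign argument on the difference of the two solutions, adapted to the random horizon and to the merely monotone (rather than Lipschitz) dependence of $h$ on $y$. Write $\Delta Y = Y - Y'$, $\Delta Z = Z - Z'$ and $\Delta\xi = \xi - \xi'$. Subtracting the two copies of BSDE~(\ref{general BSDE with random terminal time}), which are posed on the same horizon $\tau = \tau'$, gives, for $t\le\tau$,
\[
\Delta Y_t = \Delta\xi + \int_{t\wedge\tau}^{\tau}\bigl(h(s,Y_s,Z_s)-h'(s,Y'_s,Z'_s)\bigr)\,ds - \int_{t\wedge\tau}^{\tau}\Delta Z_s\,dB_s .
\]
Since $\Delta\xi\le 0$ and the conclusion to be reached is $\Delta Y\le 0$, the quantity I would control is the positive part $(\Delta Y)^{+}$.

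The key step is to apply It\^{o}'s formula to $e^{\theta s}\bigl((\Delta Y_s)^{+}\bigr)^{2}$ on $[t\wedge\tau,\tau]$, with $\theta$ chosen so that $\gamma=\beta^{2}-2\alpha<\theta\le\mu$ (admissible by the hypotheses inherited from Lemma~\ref{Stability w.r.t. perturbation}). Using the splitting $h(s,Y_s,Z_s)-h'(s,Y'_s,Z'_s)=[h(s,Y_s,Z_s)-h(s,Y'_s,Z_s)]+[h(s,Y'_s,Z_s)-h(s,Y'_s,Z'_s)]+[h(s,Y'_s,Z'_s)-h'(s,Y'_s,Z'_s)]$, I would bound the product $(\Delta Y_s)^{+}$ times each bracket on the set $\{\Delta Y_s>0\}$: the first by the monotonicity $(A_{1})(iii)$, giving at most $-\alpha\bigl((\Delta Y_s)^{+}\bigr)^{2}$; the second by the Lipschitz bound $(A_{1})(ii)$ together with Young's inequality, giving at most $\tfrac12\beta^{2}\bigl((\Delta Y_s)^{+}\bigr)^{2}+\tfrac12\mathbf{1}_{\{\Delta Y_s>0\}}|\Delta Z_s|^{2}$; and the third by $h\le h'$, which is $\le 0$ because $(\Delta Y_s)^{+}\ge 0$. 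The terminal contribution vanishes since $(\Delta\xi)^{+}=0$. After the factor $2$ from It\^{o}'s formula, the $|\Delta Z|^{2}$ terms cancel against the quadratic-variation term, and the running integral of $\bigl((\Delta Y_s)^{+}\bigr)^{2}$ comes with the factor $\theta-\gamma$ (this is precisely where $\theta>\gamma$ is needed). I am then left with
\[
e^{\theta(t\wedge\tau)}\bigl((\Delta Y_{t\wedge\tau})^{+}\bigr)^{2}+(\theta-\gamma)\int_{t\wedge\tau}^{\tau}e^{\theta s}\bigl((\Delta Y_s)^{+}\bigr)^{2}\,ds \le -2\int_{t\wedge\tau}^{\tau}e^{\theta s}(\Delta Y_s)^{+}\Delta Z_s\,dB_s .
\]

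Taking expectations and discarding the nonnegative running integral would yield $E\bigl[e^{\theta(t\wedge\tau)}\bigl((\Delta Y_{t\wedge\tau})^{+}\bigr)^{2}\bigr]\le 0$, whence $(\Delta Y_{t\wedge\tau})^{+}=0$ a.s., that is $Y_{t\wedge\tau}\le Y'_{t\wedge\tau}$; combined with the convention $Y_s=\xi\le\xi'=Y'_s$ for $s>\tau$, this gives $Y_t\le Y'_t$ a.s. for every $t$. The step that needs care, and which I expect to be the main obstacle, is the vanishing of the expectation of the stochastic integral over the possibly unbounded random horizon $\tau$: its integrand is not a priori in $M^{2}$, so I would introduce a localizing sequence $\sigma_n\uparrow\tau$, take expectations up to $\sigma_n$ to kill the local-martingale part, and then let $n\to\infty$. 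This passage to the limit is exactly where the exponential-weight integrability supplied by Lemma~\ref{Wellposedness Darling and Pardoux}, namely $(Y,Z),(Y',Z')\in M_{\mu}^{2}$ and $E[\sup_{s}e^{\mu s}|Y_s|^{2}]<\infty$ with $\theta\le\mu$, is invoked to secure uniform integrability and to control the boundary term at $\sigma_n$. An alternative route would be to linearize and use the strictly positive adjoint weight $\Gamma$ solving $d\Gamma_s=\Gamma_s(a_s\,ds+b_s\,dB_s)$, reducing the problem to a linear BSDE whose solution is manifestly of the right sign; however, the linearized drift coefficient $a_s$ is there only bounded above (by $-\alpha$) and not below, so the $(\Delta Y)^{+}$ argument above is cleaner for the monotone generator.
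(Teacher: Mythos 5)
Your argument is correct, but note that the paper does not actually prove this lemma: it is recalled verbatim from Darling and Pardoux (Corollary 4.4.2 of \cite{DP-1997}), alongside the stability estimate of Lemma \ref{Stability w.r.t. perturbation}, so there is no in-paper proof to match against. What you have written is the standard self-contained proof for monotone generators with random terminal time, and all the delicate points check out: applying It\^{o}'s formula to $e^{\theta s}\bigl((\Delta Y_s)^{+}\bigr)^{2}$ is legitimate since $y\mapsto (y^{+})^{2}$ is $C^{1}$ with absolutely continuous derivative and no local-time contribution at $0$; your three-way splitting pairs the monotonicity $(A_1)(iii)$, the Lipschitz bound in $z$ from $(A_1)(ii)$, and the sign condition $h\le h'$ exactly with the factor $(\Delta Y_s)^{+}\ge 0$, and the indicator $\mathbf{1}_{\{\Delta Y_s>0\}}$ you keep on the $|\Delta Z|^{2}$ term is precisely what is needed for it to cancel against the quadratic-variation term; the coefficient $\theta-\gamma>0$ with $\gamma=\beta^{2}-2\alpha$ comes out right; and the localization plus the $M^{2}_{\mu}$-integrability and $E[\sup_s e^{\mu s}|Y_s|^2]<\infty$ from Lemma \ref{Wellposedness Darling and Pardoux} (with $\theta\le\mu$) do suffice, via Burkholder--Davis--Gundy or uniform integrability, to kill the stochastic integral in expectation. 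Your closing remark is also apt: linearizing in $y$ would produce a drift coefficient only bounded above, so the $(\Delta Y)^{+}$ route is indeed the cleaner one here. In short, you have supplied a complete proof where the paper only supplies a citation.
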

\section{Regularity of value function}
We begin with the following lemma; see Lions and Menaldi \cite{LM-1982}. For the convenience of the reader, considering the importance of the lemma, we give its proof here.
\begin{lemma}\label{lemma for assumption}
Under assumption $(H_{1})$, if we have $(H_{4})$, then there exists a positive constant $\mu$ such that $(H_{4}^{\prime})$ holds.
\end{lemma}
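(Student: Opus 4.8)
The plan is to verify the two components of $(H_{4}^{\prime})$ separately: the closedness of the regular set $\Gamma$, and the uniform exponential moment bound $\sup_{x\in\overline{D},v\in\mathcal{V}}E[\exp(\mu\tau_{x,v})]<\infty$ for some $\mu>0$. Both rest on constructing smooth auxiliary functions on $\overline{D}$ whose image under the generator $\mathcal{L}(x,v)$ has a definite sign, uniformly in $v\in V$, exploiting the non-degeneracy in $(H_{4})(1)$ and the boundedness of $b,\sigma$ on $\overline{D}\times V$ recorded in Remark \ref{remark for boundedness of the coefficient}.

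For the moment bound I would first reduce the problem to finding $\phi\in C^{2}$ with $1\le\phi\le C$ on $\overline{D}$ and $\mathcal{L}(x,v)\phi(x)\le -c<0$ for all $(x,v)\in\overline{D}\times V$. Granting such a $\phi$, set $\mu:=c/C>0$; then $\mathcal{L}(x,v)\phi+\mu\phi\le -c+\mu C=0$, and Itô's formula applied to $s\mapsto e^{\mu(s\wedge\tau_{x,v}\wedge t)}\phi(X^{0,x,v}_{s\wedge\tau_{x,v}\wedge t})$ exhibits a supermartingale (the drift is nonpositive, and, truncated at $t$, the stochastic integral is a genuine martingale because $\phi,\nabla\phi,\sigma$ are bounded on $\overline{D}$). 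Hence $E[e^{\mu(t\wedge\tau_{x,v})}]\le\phi(x)\le C$, and letting $t\to\infty$ (Fatou) gives $E[e^{\mu\tau_{x,v}}]\le C$ uniformly in $x\in\overline{D}$ and $v\in\mathcal{V}$. To build $\phi$, after a translation I may assume $\overline{D}\subset\{0\le x_{1}\le R_{0}\}$ and take $\phi(x)=e^{\gamma R_{0}}+1-e^{\gamma x_{1}}$. Since $(\sigma\sigma^{\ast})_{11}(x,v)\ge\lambda$ by $(H_{4})(1)$ and $|b_{1}|\le B$ on $\overline{D}\times V$, one computes $\mathcal{L}(x,v)\phi=-e^{\gamma x_{1}}\gamma\bigl(\tfrac12(\sigma\sigma^{\ast})_{11}\gamma+b_{1}\bigr)\le-\gamma\bigl(\tfrac{\lambda}{2}\gamma-B\bigr)$, which is $\le-c<0$ once $\gamma>2B/\lambda$, while $1\le\phi\le e^{\gamma R_{0}}=:C$. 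This step uses only non-degeneracy and boundedness, not the exterior sphere.

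The exterior sphere condition $(H_{4})(2)$ enters in proving that every boundary point is regular, so that $\Gamma=\partial D$ is closed. Fix $x_{0}\in\partial D$ and let $\tilde{y}$ be the associated exterior centre, so $|x_{0}-\tilde{y}|=\rho$ and $|x-\tilde{y}|\ge\rho$ for every $x\in\overline{D}$. I would take the barrier $w(x):=e^{-\theta\rho^{2}}-e^{-\theta|x-\tilde{y}|^{2}}$, which satisfies $w\ge0$ on $\overline{D}$ and $w(x_{0})=0$. Writing $\psi=e^{-\theta|x-\tilde{y}|^{2}}$ and using $(x-\tilde{y})^{\ast}\sigma\sigma^{\ast}(x-\tilde{y})\ge\lambda|x-\tilde{y}|^{2}\ge\lambda\rho^{2}$ together with the boundedness of $\mathrm{tr}(\sigma\sigma^{\ast})$ and of $b\cdot(x-\tilde{y})$ on $\overline{D}$, the leading $2\theta^{2}\lambda\rho^{2}$ term dominates for large $\theta$, giving $\mathcal{L}(x,v)\psi>0$, hence $\mathcal{L}(x,v)w\le-\delta<0$ on $\overline{D}\times V$. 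Applying Itô to $w(X^{0,x_{0},v}_{t\wedge\tau_{x_{0},v}})$ yields $0\le E[w(X_{t\wedge\tau_{x_{0},v}})]\le-\delta\,E[t\wedge\tau_{x_{0},v}]$, forcing $\tau_{x_{0},v}=0$ a.s.; thus $x_{0}\in\Gamma$, and $\Gamma=\partial D$ is closed.

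The main obstacle is the construction of the two auxiliary functions with the correct, uniform-in-$v$ sign of $\mathcal{L}(x,v)$: the exponential profiles must be tuned ($\gamma$ large, $\theta$ large) so that the non-degenerate second-order term overwhelms the bounded first-order contribution, and one must check that the resulting constants $c,C,\delta$ can be chosen independently of $v\in V$ and, in the barrier case, of $x_{0}\in\partial D$ — here the uniformity of $\rho$ in $(H_{4})(2)$ is essential. The remaining points, namely the supermartingale and localization details in Itô's formula and the passage $t\to\infty$, are routine.
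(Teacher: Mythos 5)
Your proposal is correct and takes essentially the same route as the paper: the closedness of $\Gamma$ is obtained by showing $\Gamma=\partial D$ via the exterior-sphere barrier $w(x)=e^{-k\rho^{2}}-e^{-k|x-\tilde{y}|^{2}}$ with $-\mathcal{L}(x,v)w\ge\tilde{\mu}>0$ for $k$ large, followed by It\^{o}'s formula and $w(y,y)=0$, which is exactly the paper's argument. The only difference is that for the uniform bound $\sup_{x\in\overline{D},v\in\mathcal{V}}E[e^{\mu\tau_{x,v}}]<\infty$ the paper simply cites Corollary 3.3 of Khasminskii or Lemma 2.4 of Lions--Menaldi, whereas you reprove it with the standard Lyapunov function $\phi(x)=e^{\gamma R_{0}}+1-e^{\gamma x_{1}}$; that construction is sound and is precisely the proof underlying the citation.
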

\begin{proof}
First, from Corollary 3.3 \cite{K-2012}  or  Lemma 2.4 \cite{LM-1982} we know that under the assumptions $(H_{1})$ and $(H_{4})$, there exists a constant $\mu>0$ such that $\sup_{x\in\overline{D},v\in \mathcal{V}}Ee^{\mu \tau_{x,v}}<\infty$.

To prove $(H_{4}^{\prime})$, we also have to show that $\Gamma:=\left\{x\in\partial D: \mathbb{P}(\tau_{x,v}>0)=0\right\}$ is closed. We claim that we even have $\Gamma=\partial D$. Indeed, for any fixed $y\in\partial D$, due to $(H_{4})$, there exists $\tilde{y}\in \mathbb{R}^{d}/D$, s.t. $\overline{D}\cap\{z:|\tilde{y}-z|\leq \rho\}=\{y\}$. Now we introduce the function $w(x,y):=e^{-k\rho^{2}}-e^{-k|x-\tilde{y}|^{2}}$, $x\in \overline{D}$,  for some $k>0$. It's not hard to check that, for $1\leq i,j\leq d$,
\[
\frac{\partial w}{\partial x_{i}}(x,y):=2k(x_{i}-\tilde{y}_{i})e^{-k|x-\tilde{y}|^{2}}, ~
\frac{\partial^{2}w}{\partial x_{i}\partial x_{j}}(x,y):=2ke^{-k|x-\tilde{y}|^{2}}\delta_{i,j}-4k^{2}(x_{i}-\tilde{y}_{i})(x_{j}-\tilde{y}_{j})e^{-k|x-\tilde{y}|^{2}},
\]
and
\[
\begin{array}[c]{rl}
\mathcal{L}(x,v)w(x,y)
=& e^{-k|x-\tilde{y}|^{2}}\Big(-2k^{2}\sum_{i,j=1}^{d}(\sigma\sigma^{\ast})_{i,j}(x,v)(x_{i}-\tilde{y}_{i})(x_{j}-\tilde{y}_{j})\medskip\\
& +k\sum_{i=1}^{d}(\sigma\sigma^{\ast})_{i,i}(x,v)
+2k\sum_{i=1}^{d}b_{i}(x,v)(x_{i}-\tilde{y}_{i})\Big).
\end{array}
\]
From the assumptions $(H_{1})$ and $(H_{4})$, the boundedness of $D$ and $|x-\tilde{y}|\ge \rho>0$, $x\in \overline{D}$, it follows that, for $k$ large enough, there exists a strictly positive constant $\tilde{\mu}$, s.t.
\[
-\mathcal{L}(x,v)w(x,y)\ge \tilde{\mu}, \text{ for all } x\in \overline{D}.
\]
Applying It\^{o}'s formula to $w(X^{0,y,v}_{s},y)$ and taking the expectation, we obtain
\[
0\leq E\left[w(X^{0,y,v}_{t\wedge\tau_{y,v}},y)\right]\leq w(y,y)-E\left[\int_{0}^{t\wedge\tau_{y,v}}\tilde{\mu} ds\right]
\]
and, thus, $E[\tilde{\mu}(t\wedge\tau_{y,v})]\leq w(y,y)=0$.
Hence, from Fatou's lemma we have $E[\tilde{\mu}\tau_{y,v}]=0$.
Therefore, $\mathbb{P}(\tau_{y,v}=0)=1$ and $y\in\Gamma$.
\end{proof}
\smallskip

Let us recall the definition of the value function
\[
\begin{array}[c]{rl}
u(x):=\inf\limits_{v\in\mathcal{V}}J(x,v)=\inf\limits_{v\in\mathcal{V}}Y_{0}^{0,x,v}
=
\inf\limits_{v\in\mathcal{V}}E\left[g(X_{\tau_{x,v}}^{0,x,v})+\displaystyle\int_{0}^{\tau_{x,v}}f(X_{s}^{0,x,v},Y_{s}^{0,x,v},Z_{s}^{0,x,v},v_{s})ds\right].
\end{array}
\]
In the following part of this section, we will show that $u$ is $1/2$-H\"{o}lder continuous.
Before doing this, we present two auxiliary lemmas.
\begin{lemma}\label{estimate for forward SDE}
Under the assumptions $(H_{1})$ and $(H_{4})$, for any real-valued stopping time $\tilde{\theta}$, we have
\[
E\left[|X^{0,x,v}_{\tilde{\theta}}-X^{0,x^{\prime},v}_{\tilde{\theta}}|^{2}e^{-2\delta\tilde{\theta}}\right]\leq |x-x^{\prime}|^{2},
\quad x,x^{\prime}\in\mathbb{R}^{d},~v\in\mathcal{V},
\]
where
\[
\delta:=\sup_{x,x^{\prime}\in \mathbb{R}^{d}, v\in V}\left\{\frac{1}{2}\text{Tr}\frac{(\sigma(x,v)-\sigma(x^{\prime},v))(\sigma(x,v)-\sigma(x^{\prime},v))^{\ast}}{|x-x^{\prime}|^{2}}+
\frac{(x-x^{\prime})\cdot(b(x,v)-b(x^{\prime},v))}{|x-x^{\prime}|^{2}}\right\}.
\]
\end{lemma}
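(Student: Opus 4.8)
The plan is to apply It\^{o}'s formula to the exponentially weighted squared difference of the two trajectories and to exploit the very definition of $\delta$ to render the resulting drift non-positive. First I would introduce the difference process $\xi_s := X^{0,x,v}_s - X^{0,x',v}_s$, which, upon subtracting the two copies of SDE (\ref{SDE}) (driven by the same control $v$ and the same Brownian motion $B$), solves
\[
d\xi_s = \left(b(X^{0,x,v}_s,v_s)-b(X^{0,x',v}_s,v_s)\right)ds + \left(\sigma(X^{0,x,v}_s,v_s)-\sigma(X^{0,x',v}_s,v_s)\right)dB_s,
\]
with $\xi_0 = x - x'$. Before anything else I would record that $\delta$ is finite: by $(H_1)(i)$ both $|b(x,v)-b(x',v)|\le L|x-x'|$ and $|\sigma(x,v)-\sigma(x',v)|\le L|x-x'|$, so each term in the supremum defining $\delta$ is bounded (indeed $\delta\le \tfrac12 L^2 + L$), and the weight $e^{-2\delta s}$ is well defined.

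Next I would apply It\^{o}'s formula to $s\mapsto e^{-2\delta s}|\xi_s|^2$. Since $e^{-2\delta s}$ is smooth and of bounded variation, there is no bracket term between it and $|\xi_s|^2$, and one obtains
\[
d\left(e^{-2\delta s}|\xi_s|^2\right) = e^{-2\delta s}\left[\,2\,\xi_s\cdot(b-b') + \text{Tr}\big((\sigma-\sigma')(\sigma-\sigma')^{\ast}\big) - 2\delta|\xi_s|^2\,\right]ds + dM_s,
\]
where $b,b',\sigma,\sigma'$ are evaluated along the two trajectories at time $s$ and $dM_s = 2e^{-2\delta s}\,\xi_s\cdot(\sigma-\sigma')\,dB_s$ is the local-martingale part. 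The decisive point is that, by the definition of $\delta$ as a supremum over all arguments, for every $s$ one has $2\delta|\xi_s|^2 \ge \text{Tr}((\sigma-\sigma')(\sigma-\sigma')^{\ast}) + 2\,\xi_s\cdot(b-b')$ on $\{\xi_s\ne 0\}$ (and the bracketed expression vanishes trivially on $\{\xi_s=0\}$). Hence the $ds$-term is pointwise non-positive, so $e^{-2\delta s}|\xi_s|^2$ is a local supermartingale: $d(e^{-2\delta s}|\xi_s|^2)\le dM_s$.

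Finally I would pass from this differential inequality to the claimed bound by a standard localization. Introducing the stopping times $\tau_n := \inf\{s\ge0: |\xi_s|\ge n\}$, which reduce $M$ to a genuine martingale, I would integrate the inequality over $[0,\ \tilde\theta\wedge\tau_n\wedge t]$ and take expectations; the martingale term drops out and the non-positive drift gives
\[
E\!\left[e^{-2\delta(\tilde\theta\wedge\tau_n\wedge t)}\,|\xi_{\tilde\theta\wedge\tau_n\wedge t}|^2\right]\le |\xi_0|^2 = |x-x'|^2.
\]
Since $X^{0,x,v}$ and $X^{0,x',v}$ are non-exploding solutions by $(H_1)$, one has $\tau_n\uparrow\infty$ a.s., and letting $n\to\infty$ and $t\to\infty$ while using the path-continuity of $\xi$ and $\tilde\theta<\infty$, Fatou's lemma yields $E[e^{-2\delta\tilde\theta}|\xi_{\tilde\theta}|^2]\le |x-x'|^2$, which is exactly the assertion. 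The only genuinely delicate part is this limiting step: one must check that the localized left-hand sides converge to the intended quantity, which is where the finiteness of $\tilde\theta$ and the continuity of the paths enter. The algebraic heart of the proof, by contrast, is the observation that $\delta$ was defined precisely so as to absorb the drift and diffusion contributions into the exponential weight.
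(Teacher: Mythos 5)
Your proposal is correct and follows essentially the same route as the paper's proof: apply It\^{o}'s formula to $e^{-2\delta s}|X^{0,x,v}_s-X^{0,x',v}_s|^2$ on $[0,\tilde\theta\wedge t]$, use the definition of $\delta$ to make the drift term non-positive, and let $t\to\infty$ via Fatou's lemma and path continuity. Your extra localization with $\tau_n$ is a small added precaution (the paper takes the expectation of the stochastic integral directly), but it does not change the argument.
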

\begin{proof}
We apply It\^{o}'s formula to $|X_{s}^{0,x,v}-X_{s}^{0,x^{\prime},v}|^{2}e^{-2\delta s}$ between $0$ and $\tilde{\theta}\wedge t$. It follows that
\[
\begin{array}[c]{l}
\quad E\left[|X_{\tilde{\theta}\wedge t}^{0,x,v}-X_{\tilde{\theta}\wedge t}^{0,x^{\prime},v}|^{2}e^{-2\delta (\tilde{\theta}\wedge t)}\right]\medskip\\
=|x-x^{\prime}|^{2}+E\Big[\displaystyle\int_{0}^{\tilde{\theta}\wedge t}\big\{\text{Tr}(\sigma(X_{r}^{0,x,v},v)-\sigma(X_{r}^{0,x^{\prime},v},v))(\sigma(X_{r}^{0,x,v},v)-\sigma(X_{r}^{0,x^{\prime},v},v))^{\ast}\medskip\\
\qquad\qquad\quad+2(X_{r}^{0,x,v}-X_{r}^{0,x^{\prime},v})\cdot(b(X_{r}^{0,x,v},v)-b(X_{r}^{0,x^{\prime},v},v))
-2\delta|X_{r}^{0,x,v}-X_{r}^{0,x^{\prime},v}|^{2}\big\}e^{-2\delta r}dr\Big].
\end{array}
\]
Thus, from the definition of $\delta$, we have
$E\left[|X_{\tilde{\theta}\wedge t}^{0,x,v}-X_{\tilde{\theta}\wedge t}^{0,x^{\prime},v}|^{2}e^{-2\delta (\tilde{\theta}\wedge t)}\right]
\leq|x-x^{\prime}|^{2}$,
and letting $t\rightarrow\infty$, we obtain from Fatou's lemma and the continuity of $X_{r}^{0,x,v}$ in $r$ that
\[
E\left[|X_{\tilde{\theta}}^{0,x,v}-X_{\tilde{\theta}}^{0,x^{\prime},v}|^{2}e^{-2\delta \tilde{\theta}}\right]
\leq|x-x^{\prime}|^{2}.
\]
The proof is complete.
\end{proof}

Now we consider the function $w$ introduced in the proof of Lemma \ref{lemma for assumption}. Given $y\in\partial D$, we let $\tilde{y}\in\mathbb{R}^{d}\backslash D$ be the element for which $\overline{D}\cap \{z\in\mathbb{R}^{d}:|\tilde{y}-z|\leq \rho\}=\{y\}$ (see $(H_{4})$). For $x\in\overline{D}$ and $k>0$ we define as before $w(x,y):=e^{-k\rho^{2}}-e^{-k|x-\tilde{y}|^{2}}$. Let $w(x):=\inf_{y\in \partial D}w(x,y)$, $x\in\overline{D}$. Then $w\in W^{1,\infty}(D)$, $w\ge0$ and $w=0$ on $\partial D$. In particular, we have the following lemma,
\begin{lemma}\label{lemma for lipschitz estimate}
We suppose $(H_{1})$-$(H_{5})$. We also assume that there exists a constant $\theta$ such that $\beta^{2}-2\alpha<\theta\leq\mu$ and $\theta\leq -2[\delta]^{+}$. Then there exists a constant $\mu_{0}>0$, such that
\[
E[e^{\theta(\tau_{x^{\prime},v}\wedge\tau_{x,v})/2}-e^{\theta\tau_{x,v}/2}]\leq
\frac{|\theta|}{2\mu_{0}}\|\nabla w\|_{\infty}|x-x^{\prime}|,\quad x,~x^{\prime}\in\overline{D}.
\]
\end{lemma}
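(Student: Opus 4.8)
The plan is to first reduce the left-hand side to an integral functional and then control that functional by a barrier argument built on the function $w$. Write $\psi(s):=e^{\theta s/2}$, $\sigma:=\tau_{x',v}\wedge\tau_{x,v}$ and $\tau:=\tau_{x,v}$, so that $\sigma\le\tau$. Since $\theta\le-2[\delta]^{+}\le0$, the map $s\mapsto\psi(s)$ is nonincreasing, and $\psi'(s)=\tfrac{\theta}{2}\psi(s)$ yields the pathwise identity $\psi(\sigma)-\psi(\tau)=\tfrac{|\theta|}{2}\int_{\sigma}^{\tau}\psi(s)\,ds$ (the case $\theta=0$ being trivial, as both sides vanish). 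Taking expectations, it suffices to prove $E[\int_{\sigma}^{\tau}\psi(s)\,ds]\le\mu_{0}^{-1}\|\nabla w\|_{\infty}|x-x'|$ for a suitable $\mu_{0}>0$. I will take $\mu_{0}:=\tilde{\mu}$, the constant produced in the proof of Lemma \ref{lemma for assumption}, for which $\mathcal{L}(x,v)w(x,y)\le-\mu_{0}$ for every $x\in\overline{D}$, $v\in V$ and $y\in\partial D$.

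The central step, and the main obstacle, is a barrier inequality for the non-smooth function $w=\inf_{y}w(\cdot,y)$. For each fixed $y\in\partial D$ the function $w(\cdot,y)$ is $C^{2}$, so It\^{o}'s formula applied to $\psi(t)w(X^{0,x,v}_{t\wedge\tau},y)$ produces the drift $\tfrac{\theta}{2}\psi\,w(\cdot,y)+\psi\,\mathcal{L}(\cdot,v)w(\cdot,y)$, which is $\le-\mu_{0}\psi$ because $w(\cdot,y)\ge0$ on $\overline{D}$, $\theta\le0$ and $\mathcal{L}w(\cdot,y)\le-\mu_{0}$; the stochastic integral is a genuine martingale since its integrand is bounded (Remark \ref{remark for boundedness of the coefficient} together with $\|\nabla w\|_{\infty}<\infty$) and $E[\tau]<\infty$ by $(H_{4}')$. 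Hence $\Phi^{y}_{t}:=\psi(t)w(X^{0,x,v}_{t\wedge\tau},y)+\mu_{0}\int_{0}^{t\wedge\tau}\psi(s)\,ds$ is a nonnegative supermartingale. Because $\partial D$ is compact and $w$ is continuous, $\inf_{y}\Phi^{y}=\psi(\cdot)w(X^{0,x,v}_{\cdot\wedge\tau})+\mu_{0}\int_{0}^{\cdot\wedge\tau}\psi\,ds$ is again a supermartingale, since an infimum of supermartingales is a supermartingale. Optional sampling between $\sigma$ and $\tau$, together with $w(X^{0,x,v}_{\tau})=0$ (the path exits through $\partial D$, where $w=0$), then gives $\mu_{0}E[\int_{\sigma}^{\tau}\psi(s)\,ds]\le E[\psi(\sigma)w(X^{0,x,v}_{\sigma})]$. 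This passage from the smooth barriers $w(\cdot,y)$ to their infimum $w$ is the delicate point; working with a single fixed $y$ would not reproduce the Lipschitz constant $\|\nabla w\|_{\infty}$ of the infimum.

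It remains to bound $E[\psi(\sigma)w(X^{0,x,v}_{\sigma})]$. On $\{\sigma<\tau\}$ one has $\sigma=\tau_{x',v}$, hence $X^{0,x',v}_{\sigma}\in\partial D$ and $w(X^{0,x',v}_{\sigma})=0$, while on $\{\sigma=\tau\}$ one has $X^{0,x,v}_{\sigma}\in\partial D$ and $w(X^{0,x,v}_{\sigma})=0$; in either case $w\in W^{1,\infty}(D)$ and $w\ge0$ give $w(X^{0,x,v}_{\sigma})\le\|\nabla w\|_{\infty}|X^{0,x,v}_{\sigma}-X^{0,x',v}_{\sigma}|$. Then I split $e^{\theta\sigma/2}=e^{-\delta\sigma}\,e^{(\theta/2+\delta)\sigma}$ and apply Cauchy--Schwarz: the factor $E[e^{-2\delta\sigma}|X^{0,x,v}_{\sigma}-X^{0,x',v}_{\sigma}|^{2}]$ is at most $|x-x'|^{2}$ by Lemma \ref{estimate for forward SDE} (with $\tilde{\theta}=\sigma$), and the factor $E[e^{(\theta+2\delta)\sigma}]\le1$ because the hypothesis $\theta\le-2[\delta]^{+}$ forces $\theta+2\delta\le0$. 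This yields $E[\psi(\sigma)|X^{0,x,v}_{\sigma}-X^{0,x',v}_{\sigma}|]\le|x-x'|$, hence $\mu_{0}E[\int_{\sigma}^{\tau}\psi\,ds]\le\|\nabla w\|_{\infty}|x-x'|$, and combining with the reduction of the first paragraph completes the proof. The only genuine subtleties are the infimum-of-supermartingales argument and the exponential split matched to the constraint $\theta\le-2[\delta]^{+}$; the remaining manipulations are routine.
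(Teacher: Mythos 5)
Your proposal is correct and takes essentially the same route as the paper's proof: the same barrier $w$, the same supermartingale $\mu_{0}\int_{0}^{t\wedge\tau}e^{\theta r/2}dr+e^{\theta(t\wedge\tau)/2}w(X^{0,x,v}_{t\wedge\tau})$ (you pass from fixed $y$ to the infimum by noting that an infimum of supermartingales is a supermartingale, whereas the paper uses a measurable selection $\xi$ realizing $w(X_{s\wedge\tau})=w(X_{s\wedge\tau},\xi)$ --- both valid), the same optional stopping, boundary splitting, and application of Lemma \ref{estimate for forward SDE}. The remaining differences --- performing the reduction $\psi(\sigma)-\psi(\tau)=\tfrac{|\theta|}{2}\int_{\sigma}^{\tau}\psi$ at the outset, and absorbing the nonpositive term $\tfrac{\theta}{2}\psi w$ into the drift estimate rather than into the constant $\mu_{0}$ --- are cosmetic.
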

\begin{proof}
We observe that for $\theta=0$, the lemma holds obviously. Now it is sufficient to consider the case that $\theta\leq-2[\delta]^{+}$ and $\theta<0$. Recall that $w(x,y):=e^{-k\rho^{2}}-e^{-k|x-\tilde{y}|^{2}}$, $x\in \overline{D}$, $k>0$, where $\tilde{y}$ associated with $y$ by $(H_{4})$. Similarly to the proof of Lemma \ref{lemma for assumption}, for any fixed $\theta$, we know that for $k$ large enough, there exists a constant $\mu_{0}>0$, s.t.
\[
-\mathcal{L}(x,v)w(x,y)-\frac{\theta}{2} w(x,y)\ge \mu_{0}, \text{ for all } x\in \overline{D}.
\]
We apply It\^{o}'s formula to $w(X_{s\wedge\tau_{x,v}}^{0,x,v},y)e^{\theta (s\wedge\tau_{x,v})/2}$ and take the conditional expectation. Then
\begin{equation}\label{supermartingale inequality}
\begin{array}[c]{l}
\quad E[\mu_{0}\displaystyle\int_{0}^{t\wedge\tau_{x,v}}e^{\theta r/2}dr+w(X^{0,x,v}_{t\wedge\tau_{x,v}},y)e^{\theta (t\wedge\tau_{x,v})/2}|\mathcal{F}_{s\wedge\tau_{x,v}}] \medskip\\
=\mu_{0}\displaystyle\int_{0}^{s\wedge\tau_{x,v}}e^{\theta r/2}dr+w(X^{0,x,v}_{s\wedge\tau_{x,v}},y)e^{\theta (s\wedge\tau_{x,v})/2}\medskip\\
\qquad\qquad\qquad+E[\displaystyle\int_{s\wedge\tau_{x,v}}^{t\wedge\tau_{x,v}}(\mathcal{L}(X^{x,v}_{r},v_{r})w(X^{0,x,v}_{r},y)+\frac{\theta}{2} w(X^{0,x,v}_{r},y)+\mu_{0})e^{\theta r/2}dr|\mathcal{F}_{s\wedge\tau_{x,v}}]\medskip\\
\leq \mu_{0}\displaystyle\int_{0}^{s\wedge\tau_{x,v}}e^{\theta r/2}dr+w(X^{0,x,v}_{s\wedge\tau_{x,v}},y)e^{\theta (s\wedge\tau_{x,v})/2}, \quad t\ge s.
\end{array}
\end{equation}
This means that $\mu_{0}\int_{0}^{t\wedge\tau_{x,v}}e^{\theta r/2}dr+w(X^{0,x,v}_{t\wedge\tau_{x,v}},y)e^{\theta (t\wedge\tau_{x,v})/2}$, $t\ge0$, is a supermartingale, continuous and  bounded on bounded time interval.

Recall that $w(x):=\inf_{y\in \partial D}w(x,y)$, $x\in\overline{D}$. Obviously, there is some $\mathcal{F}_{s\wedge\tau_{x,v}}$-measurable random variable $\xi$, such that $w(X^{0,x,v}_{s\wedge\tau_{x,v}})=w(X^{0,x,v}_{s\wedge\tau_{x,v}},\xi)$. Then from (\ref{supermartingale inequality}) it follows
\[
\begin{array}[c]{rl}
& \mu_{0}\displaystyle\int_{0}^{s\wedge\tau_{x,v}}e^{\theta r/2}dr+w(X^{0,x,v}_{s\wedge\tau_{x,v}})e^{\theta (s\wedge\tau_{x,v})/2}\medskip\\
\ge & E[\mu_{0}\displaystyle\int_{0}^{t\wedge\tau_{x,v}}e^{\theta r/2}dr+w(X^{0,x,v}_{t\wedge\tau_{x,v}},\xi)e^{\theta (t\wedge\tau_{x,v})/2}|\mathcal{F}_{s\wedge\tau_{x,v}}] \medskip\\
\ge & E[\mu_{0}\displaystyle\int_{0}^{t\wedge\tau_{x,v}}e^{\theta r/2}dr+w(X^{0,x,v}_{t\wedge\tau_{x,v}})e^{\theta (t\wedge\tau_{x,v})/2}|\mathcal{F}_{s\wedge\tau_{x,v}}], \quad \mathbb{P}-a.s.
\end{array}
\]
This shows that also $\mu_{0}\int_{0}^{t\wedge\tau_{x,v}}e^{\theta r/2}dr+w(X^{0,x,v}_{t\wedge\tau_{x,v}})e^{\theta (t\wedge\tau_{x,v})/2}$, $t\ge0$, is a supermartingale; it is also continuous and bounded on bounded time interval. Therefore, from Doob's optional stopping theorem, it follows that, for $x, x^{\prime}\in \overline{D}$
\[
\begin{array}[c]{rl}
& E\left[\mu_{0}\displaystyle\int_{0}^{t\wedge\tau_{x,v}}e^{\theta r/2}dr+w(X^{0,x,v}_{t\wedge\tau_{x,v}})e^{\theta(t\wedge\tau_{x,v})/2}\big|\mathcal{F}_{t\wedge\tau_{x^{\prime},v}\wedge\tau_{x,v}}\right]\medskip\\
\leq & \mu_{0}\displaystyle\int_{0}^{t\wedge\tau_{x^{\prime},v}\wedge\tau_{x,v}}e^{\theta r/2}dr+w(X^{0,x,v}_{t\wedge\tau_{x^{\prime},v}\wedge\tau_{x,v}})e^{\theta (\tau_{x^{\prime},v}\wedge\tau_{x,v})/2}, \quad \mathbb{P}-a.s., \quad t\ge 0.
\end{array}
\]
Taking the expectation on both sides and the limit as $t\rightarrow\infty$ (Recall that $\tau_{x^{\prime},v}$ and $\tau_{x,v}$ are finite, $\mathbb{P}$-a.s.), we get from the monotone convergence theorem
\[
\begin{array}[c]{rl}
E[\mu_{0}\displaystyle\int^{\tau_{x,v}}_{\tau_{x^{\prime},v}\wedge\tau_{x,v}}e^{\theta r/2}dr]\leq
E[w(X^{0,x,v}_{\tau_{x^{\prime},v}\wedge\tau_{x,v}})e^{\theta (\tau_{x^{\prime},v}\wedge\tau_{x,v})/2}
-w(X^{0,x,v}_{\tau_{x,v}})e^{\theta\tau_{x,v}/2}].
\end{array}
\]
Using the definition of $\tau_{x,v}$, we have $w(X^{0,x,v}_{\tau_{x,v}})=w(X^{0,x^{\prime},v}_{\tau_{x^{\prime},v}})=0\leq w(X^{0,x^{\prime},v}_{\tau_{x^{\prime},v}\wedge\tau_{x,v}})$. Thus,
\[
\begin{array}[c]{rl}
E[\mu_{0}\displaystyle\int^{\tau_{x,v}}_{\tau_{x^{\prime},v}\wedge\tau_{x,v}}e^{\theta r/2}dr]
&\leq
E[w(X^{0,x,v}_{\tau_{x^{\prime},v}\wedge\tau_{x,v}})e^{\theta (\tau_{x^{\prime},v}\wedge\tau_{x,v})/2}
-w(X^{0,x,v}_{\tau_{x,v}})e^{\theta\tau_{x,v}/2}]\medskip\\
& = E[(w(X^{0,x,v}_{\tau_{x^{\prime},v}\wedge\tau_{x,v}})
-w(X^{0,x^{\prime},v}_{\tau_{x^{\prime},v}}))e^{\theta (\tau_{x^{\prime},v}\wedge\tau_{x,v})/2}]\medskip\\
&=E[1_{\{\tau_{x^{\prime},v}\ge\tau_{x,v}\}}(w(X^{0,x,v}_{\tau_{x^{\prime},v}\wedge\tau_{x,v}})
-w(X^{0,x^{\prime},v}_{\tau_{x^{\prime},v}})e^{\theta (\tau_{x^{\prime},v}\wedge\tau_{x,v})/2}]\medskip\\
&\quad+E[1_{\{\tau_{x^{\prime},v}<\tau_{x,v}\}}(w(X^{0,x,v}_{\tau_{x^{\prime},v}\wedge\tau_{x,v}})
-w(X^{0,x^{\prime},v}_{\tau_{x^{\prime},v}}))e^{\theta (\tau_{x^{\prime},v}\wedge\tau_{x,v})/2}]\medskip\\
& =E[1_{\{\tau_{x^{\prime},v}<\tau_{x,v}\}}(w(X^{0,x,v}_{\tau_{x^{\prime},v}\wedge\tau_{x,v}})
-w(X^{0,x^{\prime},v}_{\tau_{x^{\prime},v}\wedge\tau_{x,v}}))e^{\theta (\tau_{x^{\prime},v}\wedge\tau_{x,v})/2}]\medskip\\
& \leq E[|w(X^{0,x,v}_{\tau_{x^{\prime},v}\wedge\tau_{x,v}})
-w(X^{0,x^{\prime},v}_{\tau_{x^{\prime},v}\wedge\tau_{x,v}})|e^{\theta (\tau_{x^{\prime},v}\wedge\tau_{x,v})/2}]\medskip\\
&\leq \|\nabla w\|_{\infty}E[|X^{0,x,v}_{\tau_{x^{\prime},v}\wedge\tau_{x,v}}-X^{0,x^{\prime},v}_{\tau_{x^{\prime},v}\wedge\tau_{x,v}}|e^{\theta (\tau_{x^{\prime},v}\wedge\tau_{x,v})/2}],
\end{array}
\]
where $\|\cdot\|_{\infty}:=\|\cdot\|_{L^\infty}$ denotes the $L^\infty$-norm over $\overline{D}$.
From Lemma \ref{estimate for forward SDE} and $\theta\leq-2[\delta]^{+}$ we have
\[
\begin{array}[c]{rl}
&E[|X^{0,x,v}_{\tau_{x^{\prime},v}\wedge\tau_{x,v}}-X^{0,x^{\prime},v}_{\tau_{x^{\prime},v}\wedge\tau_{x,v}}|e^{\theta (\tau_{x^{\prime},v}\wedge\tau_{x,v})/2}]\medskip\\
\leq &
\left\{E[|X^{0,x,v}_{\tau_{x^{\prime},v}\wedge\tau_{x,v}}-X^{0,x^{\prime},v}_{\tau_{x^{\prime},v}\wedge\tau_{x,v}}|^{2}e^{\theta (\tau_{x^{\prime},v}\wedge\tau_{x,v})}]\right\}^{1/2}
\leq|x-x^{\prime}|.
\end{array}
\]
Consequently, as $\theta<0$, it follows
$2\frac{\mu_{0}}{|\theta|}E[e^{\theta(\tau_{x^{\prime},v}\wedge\tau_{x,v})/2}-e^{\theta\tau_{x,v}/2}]\leq
\|\nabla w\|_{\infty}|x-x^{\prime}|$,
from which we obtain
\[
E[e^{\theta(\tau_{x^{\prime},v}\wedge\tau_{x,v})/2}-e^{\theta\tau_{x,v}/2}]\leq
\frac{|\theta|}{2\mu_{0}}\|\nabla w\|_{\infty}|x-x^{\prime}|.
\]
\hfill
\end{proof}

\noindent Now we can give the following theorem to characterise the regularity of value function $u(x)$.
\begin{theorem}\label{regularity of u}
We suppose that the assumptions $(H_{1})$-$(H_{5})$ are satisfied. We also assume that $g\in W^{2,\infty}(D)$ and there exists a constant $\theta$ such that $\beta^{2}-2\alpha<\theta\leq\mu$ and $\theta< -2[\delta]^{+}$. Then there exists a constant $C$, such that for all $x,x^{\prime}\in \overline{D}$, we have
\[
|u(x)-u(x^{\prime})|\leq \sup_{v\in\mathcal{V}}|Y_{0}^{0,x,v}-Y_{0}^{0,x^{\prime},v}|\leq C|x-x^{\prime}|^{1/2}.
\]
\end{theorem}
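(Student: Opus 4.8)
The plan is to prove the second inequality (the first is immediate from the elementary bound $|\inf_{v}a_{v}-\inf_{v}b_{v}|\le\sup_{v}|a_{v}-b_{v}|$ applied to $a_{v}=Y_{0}^{0,x,v}$ and $b_{v}=Y_{0}^{0,x',v}$), and to obtain it uniformly in $v\in\mathcal{V}$. Fix $v$ and abbreviate $\tau:=\tau_{x,v}$, $\tau':=\tau_{x',v}$, $\varrho:=\tau\wedge\tau'$, $R:=\mathrm{diam}(\overline{D})$. The pairs $(Y^{0,x,v},Z^{0,x,v})$ and $(Y^{0,x',v},Z^{0,x',v})$ solve BSDEs of the form (\ref{general BSDE with random terminal time}) with triples $(\tau,\,g(X^{0,x,v}_{\tau}),\,f(X^{0,x,v}_{\cdot},\cdot,\cdot,v_{\cdot}))$ and $(\tau',\,g(X^{0,x',v}_{\tau'}),\,f(X^{0,x',v}_{\cdot},\cdot,\cdot,v_{\cdot}))$, which share the constants $\alpha,\beta$ (from $(H_{3})$) and $\mu$ (from $(H_{4}^{\prime})$). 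Applying Lemma \ref{Stability w.r.t. perturbation} with the $\theta$ of the hypothesis and dropping the nonnegative left-hand integral gives
\[
|Y_{0}^{0,x,v}-Y_{0}^{0,x',v}|^{2}\le\underbrace{E\big[|g(X^{0,x,v}_{\tau})e^{\theta\tau/2}-g(X^{0,x',v}_{\tau'})e^{\theta\tau'/2}|^{2}\big]}_{=:I}+C_{2}^{-1}\underbrace{E\Big[\int_{0}^{\tau\vee\tau'}e^{\theta s}|\Delta h(s)|^{2}\,ds\Big]}_{=:II},
\]
where $\Delta h(s)$ is the difference of the two drivers evaluated along $(Y^{0,x,v},Z^{0,x,v})$ (with the post-terminal-time zero convention). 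The whole task reduces to showing $I+II\le C|x-x'|$ with $C$ independent of $v$; the square root then yields the claimed $1/2$-H\"older bound.

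For the driver term $II$, I split at $\varrho$. On $\{s\le\varrho\}$ both drivers are active, so $(H_{3})(ii)$ gives $|\Delta h(s)|\le\beta|X_{s}^{0,x,v}-X_{s}^{0,x',v}|$, and here both paths lie in $\overline{D}$, whence $|X_{s}^{0,x,v}-X_{s}^{0,x',v}|\le R$. Writing $|X_{s}^{0,x,v}-X_{s}^{0,x',v}|^{2}\le R\,|X_{s}^{0,x,v}-X_{s}^{0,x',v}|$ and using $\theta\le-2[\delta]^{+}\le-2\delta$, Cauchy--Schwarz, and Lemma \ref{estimate for forward SDE} yields $E[e^{\theta s}|X_{s}^{0,x,v}-X_{s}^{0,x',v}|]\le e^{\theta s/2}|x-x'|$ for each deterministic $s$, so the time integral converges and the $\{s\le\varrho\}$-contribution is $\le (2\beta^{2}R/|\theta|)|x-x'|$. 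It is exactly this step---trading one factor $|X_{s}-X_{s}'|$ for the diameter $R$ to make the integral over the unbounded horizon converge---that gives a linear, not quadratic, bound and hence only $1/2$-H\"older regularity. On $(\varrho,\tau\vee\tau']$ one driver vanishes by convention while the other is bounded by $(H_{3})(i)$ together with the boundedness of $g$ and $f(\cdot,0,0,\cdot)$ on $\overline{D}$, so this part is $\le C\,E[\int_{\varrho}^{\tau\vee\tau'}e^{\theta s}ds]\le C'E[e^{\theta\varrho/2}-e^{\theta(\tau\vee\tau')/2}]$, controlled by $C|x-x'|$ via Lemma \ref{lemma for lipschitz estimate} and its analogue with $x,x'$ interchanged.

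For the terminal term $I$, the difficulty is that $X_{\tau}^{0,x,v}$ and $X_{\tau'}^{0,x',v}$ are boundary points that may be far apart even for close $x,x'$, so a direct Lipschitz estimate on $g$ fails. Here I use $g\in W^{2,\infty}(D)$: applying the It\^o formula (legitimate in the It\^o--Krylov sense since $\sigma$ is non-degenerate by $(H_{4})(1)$, or after mollifying $g$) to $g(X^{0,\cdot,v}_{s})e^{\theta s/2}$ between $\varrho$ and the respective exit time, I obtain
\[
g(X_{\tau}^{0,x,v})e^{\theta\tau/2}-g(X_{\tau'}^{0,x',v})e^{\theta\tau'/2}=\big(g(X_{\varrho}^{0,x,v})-g(X_{\varrho}^{0,x',v})\big)e^{\theta\varrho/2}+\mathcal{R}_{1}+\mathcal{R}_{2},
\]
where, since $\varrho=\tau\wedge\tau'$, the drift part $\mathcal{R}_{1}$ and martingale part $\mathcal{R}_{2}$ run over the single nonempty interval $(\varrho,\tau\vee\tau']$ and have bounded integrands (because $\mathcal{L}(\cdot,v)g+\tfrac{\theta}{2}g$ and $\nabla g\,\sigma$ are bounded on $\overline{D}$). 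The first term is $\le\|\nabla g\|_{\infty}^{2}E[|X_{\varrho}^{0,x,v}-X_{\varrho}^{0,x',v}|^{2}e^{\theta\varrho}]\le\|\nabla g\|_{\infty}^{2}|x-x'|^{2}$ by Lemma \ref{estimate for forward SDE} and $\theta\le-2\delta$; after integrating the exponential, $\mathcal{R}_{1}$ is dominated by a multiple of $E[(e^{\theta\varrho/2}-e^{\theta(\tau\vee\tau')/2})^{2}]$, and by the It\^o isometry between stopping times $\mathcal{R}_{2}$ by a multiple of $E[\int_{\varrho}^{\tau\vee\tau'}e^{\theta s}ds]$. Using $0\le e^{\theta\varrho/2}-e^{\theta(\tau\vee\tau')/2}\le1$ (so the square is dominated by the first power) and the elementary inequality $e^{\theta\varrho}-e^{\theta(\tau\vee\tau')}\le2(e^{\theta\varrho/2}-e^{\theta(\tau\vee\tau')/2})$, both reduce to $E[e^{\theta\varrho/2}-e^{\theta(\tau\vee\tau')/2}]$, which Lemma \ref{lemma for lipschitz estimate} (and its symmetric version) bounds by $C|x-x'|$. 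Hence $I\le C|x-x'|$.

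Combining the two estimates gives $I+C_{2}^{-1}II\le C|x-x'|$ with $C$ independent of $v$, and taking the supremum over $v\in\mathcal{V}$ followed by the square root yields $\sup_{v}|Y_{0}^{0,x,v}-Y_{0}^{0,x',v}|\le C|x-x'|^{1/2}$, which with the opening remark proves the theorem. \textbf{The main obstacle} is the terminal term $I$: reconciling the two distinct random exit times and exit locations. This is precisely what the $W^{2,\infty}$-regularity of $g$ (to run It\^o and reduce to the common time $\varrho$) combined with the exit-time estimate of Lemma \ref{lemma for lipschitz estimate} is engineered to overcome; the secondary subtlety is the diameter trick in $II$, which is responsible for the loss from Lipschitz to $1/2$-H\"older continuity.
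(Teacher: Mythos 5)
Your proposal follows essentially the same route as the paper's proof: Lemma \ref{Stability w.r.t. perturbation} to split the difference into a terminal term and a driver term, It\^o's formula for $g\in W^{2,\infty}(D)$ (in the Krylov sense) to reduce the terminal term to the common time $\varrho:=\tau_{x,v}\wedge\tau_{x',v}$, Lemma \ref{lemma for lipschitz estimate} for the exit-time discrepancies, and Lemma \ref{estimate for forward SDE} for the path discrepancy; your treatment of the terminal term $I$ is sound and is the same computation as the paper's three-term splitting. There is, however, a concrete gap in your driver term on the interval $(\varrho,\tau_{x,v}\vee\tau_{x',v}]$. You claim the surviving generator there is bounded ``by $(H_3)(i)$ together with the boundedness of $g$ and $f(\cdot,0,0,\cdot)$''. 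But $(H_3)(i)$--$(ii)$ only give $|f(x,y,z,v)|\le C(1+|y|+|z|)$: the $z$-argument does not go away. In Lemma \ref{Stability w.r.t. perturbation} the integrand is the generator difference evaluated along the solution $(Y^{0,x,v},Z^{0,x,v})$ of the \emph{first} BSDE, so on the event $\{\tau_{x',v}<\tau_{x,v}\}$ the surviving term on $(\varrho,\tau_{x,v}]$ is $f(X_s^{0,x,v},Y_s^{0,x,v},Z_s^{0,x,v},v_s)$ with the live, pointwise-unbounded $Z_s^{0,x,v}$. To close your argument you would need something like $E[\int_{\varrho}^{\tau_{x,v}}e^{\theta s}|Z_s^{0,x,v}|^2ds]\le C|x-x'|$, which calls for extra work (a uniform bound on $Y$ plus an It\^o energy identity on $[\varrho,\tau_{x,v}]$, again reduced to $E[e^{\theta\varrho/2}-e^{\theta\tau_{x,v}/2}]$ via Lemma \ref{lemma for lipschitz estimate}); it does not follow from what you wrote. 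The paper does not isolate this interval at all: it bounds the generator difference by $\beta|X_r^{0,x,v}-X_r^{0,x',v}|$ over the whole of $[0,\tau_{x,v}\vee\tau_{x',v}]$.

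A secondary point is a misdiagnosis rather than an error: the ``diameter trick'' is not what costs the Lipschitz exponent. The paper bounds the driver term by $C|x-x'|^2$ (quadratically) without it, writing $e^{\theta r}=e^{-c_0 r}e^{-2[\delta]^+ r}$ with $c_0=-2[\delta]^+-\theta>0$ so that the infinite-horizon integral converges while Lemma \ref{estimate for forward SDE} absorbs the factor $e^{-2[\delta]^+ r}$. The degradation to H\"older-$1/2$ comes entirely from the terminal term, namely from the exit-time estimates $E[e^{\theta(\tau_{x,v}\wedge\tau_{x',v})/2}-e^{\theta\tau_{x,v}/2}]\le C|x-x'|$, which are only linear in $|x-x'|$ and enter $I$ without being squared. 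Your linear bound on the driver term is of course sufficient for the stated conclusion (the domain is bounded), but the attribution in your closing remark is inaccurate.
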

\begin{proof}
Applying Lemma \ref{Stability w.r.t. perturbation} and recalling (\ref{value function}), we have
\[
|u(x)-u(x^{\prime})|^{2}\leq \sup_{v\in\mathcal{V}}|Y_{0}^{0,x,v}-Y_{0}^{0,x^{\prime},v}|^{2}\leq I_{1}+I_{2},
\]
where, for $\beta^{2}-2\alpha<\theta\leq\mu$,
$I_{1}:=\sup_{v\in\mathcal{V}}E\left[|e^{\frac{\theta}{2}\tau_{x,v}}g(X^{0,x,v}_{\tau_{x,v}})-
e^{\frac{\theta}{2}\tau_{x^{\prime},v}}g(X^{0,x^{\prime},v}_{\tau_{x^{\prime},v}})|^{2}\right]$
and
\[
I_{2}:=\sup_{v\in\mathcal{V}}C_{2}^{-1}
E[\int_{0}^{\tau_{x,v}\vee \tau_{x^{\prime},v}}e^{\theta r}|f(X_{r}^{0,x,v},Y_{r}^{0,x,v},Z_{r}^{0,x,v},v_{r})-
f(X_{r}^{0,x^{\prime},v},Y_{r}^{0,x,v},Z_{r}^{0,x,v},v_{r})|^{2}dr].
\]
Since $g\in W^{2,\infty}(D)$, using It\^{o}'s formula for Sobolev spaces (See, e.g. Chapter 2, Section 10 in Krylov \cite{K-1980}), it follows that (Notice that $\theta\neq 0$, since $\theta< -2[\delta]^{+}$)

\[
\begin{array}[c]{rl}
I_{1}&=\sup_{v\in\mathcal{V}}E\left[|e^{\frac{\theta}{2}\tau_{x,v}}g(X^{0,x,v}_{\tau_{x,v}})-
e^{\frac{\theta}{2}\tau_{x^{\prime},v}}g(X^{0,x^{\prime},v}_{\tau_{x^{\prime},v}})|^{2}\right]\medskip\\
&\leq \sup_{v\in\mathcal{V}}\Big\{3E\left[|e^{\frac{\theta}{2}\tau_{x,v}}g(X^{0,x,v}_{\tau_{x,v}})
-e^{\frac{\theta}{2}(\tau_{x,v}\wedge\tau_{x^{\prime},v})}g(X^{0,x,v}_{\tau_{x,v}\wedge \tau_{x^{\prime},v}})|^{2}\right]\medskip\\
&\quad+3E\left[|e^{\frac{\theta}{2}\tau_{x^{\prime},v}}g(X^{0,x^{\prime},v}_{\tau_{x^{\prime},v}})
-e^{\frac{\theta}{2}(\tau_{x,v}\wedge\tau_{x^{\prime},v})}g(X^{0,x^{\prime},v}_{\tau_{x,v}\wedge \tau_{x^{\prime},v}})|^{2}\right]\medskip\\
&\quad+3E\left[e^{\theta(\tau_{x,v}\wedge\tau_{x^{\prime},v})}|g(X^{0,x,v}_{\tau_{x,v}\wedge \tau_{x^{\prime},v}})-g(X^{0,x^{\prime},v}_{\tau_{x,v}\wedge \tau_{x^{\prime},v}})|^{2}\right]\Big\}\medskip\\
&\leq 2\sup_{v\in \mathcal{V}}
\Big\{\frac{12}{|\theta|^2}
E\left[|e^{\frac{\theta}{2}\tau_{x,v}}-e^{\frac{\theta}{2} (\tau_{x,v}\wedge\tau_{x^{\prime},v})}|^{2}\right]
\sup_{v\in V}\left(\|\mathcal{L}(\cdot,v)g(\cdot)+\frac{\theta}{2}g(\cdot)\|_{\infty}^{2}\right)\medskip\\
&\quad+\frac{3}{|\theta|}E\left[|e^{\theta\tau_{x,v}}-e^{\theta(\tau_{x,v}\wedge\tau_{x^{\prime},v})}|\right]
\sup_{v\in V}\left(\|\nabla g(\cdot)\sigma(\cdot,v)\|_{\infty}^{2}\right)\medskip\\
&\quad+\frac{12}{|\theta|^2}E\left[|e^{\frac{\theta}{2}\tau_{x^{\prime},v}}-e^{\frac{\theta}{2} (\tau_{x,v}\wedge\tau_{x^{\prime},v})}|^{2}\right]
\sup_{v\in V}\left(\|\mathcal{L}(\cdot,v)g(\cdot)+\frac{\theta}{2}g(\cdot)\|_{\infty}^{2}\right)\medskip\\
&\quad+\frac{3}{|\theta|}E\left[|e^{\theta\tau_{x^{\prime},v}}-e^{\theta(\tau_{x,v}\wedge\tau_{x^{\prime},v})}|\right]
\sup_{v\in V}\left(\|\nabla g(\cdot)\sigma(\cdot,v)\|_{\infty}^{2}\right)\medskip\\
&\quad+3E\left[e^{\theta(\tau_{x,v}\wedge\tau_{x^{\prime},v})}|X^{0,x,v}_{\tau_{x,v}\wedge \tau_{x^{\prime},v}}-X^{0,x^{\prime},v}_{\tau_{x,v}\wedge \tau_{x^{\prime},v}}|^{2}\right]\|\nabla g\|_{\infty}^{2}\Big\}.
\end{array}
\]
Recall that from Lemma \ref{estimate for forward SDE} we have
\[
\begin{array}[c]{l}
Ee^{\theta(\tau_{x,v}\wedge\tau_{x^{\prime},v})}|X^{0,x,v}_{\tau_{x,v}\wedge \tau_{x^{\prime},v}}-X^{0,x^{\prime},v}_{\tau_{x,v}\wedge \tau_{x^{\prime},v}}|^{2}
\leq|x-x^{\prime}|^{2}.
\end{array}
\]

Now we are going to estimate $E|e^{\frac{\theta}{2}\tau_{x,v}}-e^{\frac{\theta}{2}(\tau_{x,v}\wedge\tau_{x^{\prime},v})}|^{2}$ and $E|e^{\theta\tau_{x,v}}-e^{\theta(\tau_{x,v}\wedge\tau_{x^{\prime},v})}|$. Using Lemma \ref{lemma for lipschitz estimate}, it's not hard to obtain that there exist constants $C>0$, independent of $x,x^{\prime},v$, such that
\[
E\left[|e^{\frac{\theta}{2}\tau_{x,v}}-e^{\frac{\theta}{2}(\tau_{x,v}\wedge\tau_{x^{\prime},v})}|^{2}\right]
\leq 2E\left[|e^{\frac{\theta}{2}\tau_{x,v}}-e^{\frac{\theta}{2}(\tau_{x,v}\wedge\tau_{x^{\prime},v})}|\right]
\leq C|x-x^{\prime}|,
\]
and
\[
\begin{array}[c]{rl}
E\left[|e^{\theta\tau_{x,v}}-e^{\theta(\tau_{x,v}\wedge\tau_{x^{\prime},v})}|\right]
&=E\left[|e^{\frac{\theta}{2}\tau_{x,v}}+e^{\frac{\theta}{2}(\tau_{x,v}\wedge\tau_{x^{\prime},v})}|
|e^{\frac{\theta}{2}\tau_{x,v}}-e^{\frac{\theta}{2}(\tau_{x,v}\wedge\tau_{x^{\prime},v})}|\right]\medskip\\
&\leq 2E|e^{\frac{\theta}{2}\tau_{x,v}}-e^{\frac{\theta}{2}(\tau_{x,v}\wedge\tau_{x^{\prime},v})}|\leq C|x-x^{\prime}|.
\end{array}
\]
Finally, let us compute $I_{2}$. For $\theta<-2[\delta]^{+}$, we denote $c_{0}:=-2[\delta]^{+}-\theta>0$. Then, using Lemma \ref{estimate for forward SDE} it follows
\[
\begin{array}[c]{rl}
I_{2}&=\sup\limits_{v\in\mathcal{V}}C_{2}^{-1}
E[\displaystyle\int_{0}^{\tau_{x,v}\vee \tau_{x^{\prime},v}}e^{\theta r}|f(X_{r}^{0,x,v},Y_{r}^{0,x,v},Z_{r}^{0,x,v},v_{r})-
f(X_{r}^{0,x^{\prime},v},Y_{r}^{0,x,v},Z_{r}^{0,x,v},v_{r})|^{2}dr]\medskip\\
&\leq\sup_{v\in\mathcal{V}}C_{2}^{-1}\beta^{2}
E[\displaystyle\int_{0}^{\tau_{x,v}\vee \tau_{x^{\prime},v}}e^{\theta r}|X_{r}^{0,x,v}-X_{r}^{0,x^{\prime},v}|^{2}dr]\medskip\\
&\leq\sup_{v\in\mathcal{V}}C_{2}^{-1}\beta^{2}
E[\displaystyle\int_{0}^{\infty}e^{-c_{0}r}e^{-2[\delta]^{+}(r\wedge(\tau_{x,v}\vee \tau_{x^{\prime},v}))}
|X_{r\wedge(\tau_{x,v}\vee \tau_{x^{\prime},v})}^{0,x,v}
-X_{r\wedge(\tau_{x,v}\vee \tau_{x^{\prime},v})}^{0,x^{\prime},v}|^{2}dr]\medskip\\
&\leq |x-x^{\prime}|^{2}C_{2}^{-1}\beta^{2}\displaystyle\int_{0}^{\infty}e^{-c_{0}r}dr\leq \frac{\beta^{2}}{C_{2}c_{0}}|x-x^{\prime}|^{2}.
\end{array}
\]
Therefore, there exists a constant $C>0$ such that, for all $x,x^{\prime}\in\overline{D}$,
\[
|u(x)-u(x^{\prime})|\leq C|x-x^{\prime}|^{1/2}.
\]
\hfill
\end{proof}
\begin{remark}
Let us point out that we can follow the approach of \cite{LM-1982} to show the regularity of $u$. However, the method of \cite{LM-1982} needs the boundedness of $f$, and translating this method to our framework, we cannot show that $u$ is Lipschitz continuous, but only $1/2$-H\"{o}lder continuous.
\end{remark}

\section{Dynamic programming principle}
In this section, we will establish the dynamic programming principle (DPP) for our stochastic exit time optimal control problem. The main idea is to extend the stochastic backward semigroup introduced by Peng \cite{P-97} to BSDEs with random terminal time.

For $(x,v)\in \mathbb{R}^{d}\times \mathcal{V}$, we recall SDE (\ref{SDE}) and the definition of the exit time $\tau_{x,v}$ (see (\ref{exit time})). Then, for a given stopping time $\Theta$ and a real valued $\mathcal{F}_{\tau_{x,v}\wedge \Theta}$-measurable random variable $\eta$ satisfying $E[e^{\mu \tau_{x,v}}|\eta|^{2}]<+\infty$,  we know from Lemma \ref{Wellposedness Darling and Pardoux} that the following BSDE
\[
\tilde{Y}_{t}^{0,x,v}=\eta+\int_{t\wedge\tau_{x,v}\wedge\Theta}^{\tau_{x,v}\wedge\Theta}f(s,X_{s}^{0,x,v},\tilde{Y}_{s}^{0,x,v},\tilde{Z}_{s}^{0,x,v},v_s)ds
-\int_{t\wedge\tau_{x,v}\wedge\Theta}^{\tau_{x,v}\wedge\Theta}\tilde{Z}_{s}^{0,x,v}dB_{s}, \quad t\ge0,
\]
has a unique solution $(\tilde{Y}^{0,x,v},\tilde{Z}^{0,x,v})\in M_{\gamma}^{2}(0,\tau_{x,v}\wedge\Theta;\mathbb{R})\times M_{\gamma}^{2}(0,\tau_{x,v}\wedge\Theta;\mathbb{R}^{m})$. Moreover, this solution belongs to
$M_{\mu}^{2}(0,\tau_{x,v}\wedge\Theta;\mathbb{R})\times M_{\mu}^{2}(0,\tau_{x,v}\wedge\Theta;\mathbb{R}^{m})$ and we also have
\[
E[\sup\limits_{0\leq s\leq \tau_{x,v}\wedge\Theta}e^{\mu s}|\tilde{Y}_{s}^{0,x,v}|^{2}]<\infty.
\]
We define the backward semigroup by setting
$G_{s,\tau_{x,v}\wedge\Theta}^{0,x,v}[\eta]:=\tilde{Y}_{s\wedge\tau_{x,v}}^{0,x,v}$,
and for simplicity we denote
$G_{\tau_{x,v}\wedge\Theta}^{0,x,v}[\eta]:=\tilde{Y}_{0}^{0,x,v}$.
Then obviously, for the solution $(Y^{0,x,v},Z^{0,x,v})$ of BSDE (\ref{BSDE coupled with SDE}), we have
\begin{equation}\label{backward semigroup}
Y_{0}^{0,x,v}=G_{\tau_{x,v}}^{0,x,v}[g(X_{\tau_{x,v}}^{0,x,v})]=G_{\tau_{x,v}\wedge\Theta}^{0,x,v}[Y_{\tau_{x,v}\wedge\Theta}^{0,x,v}]=
G_{\tau_{x,v}\wedge\Theta}^{0,x,v}[Y_{\Theta}^{0,x,v}],
\end{equation}
since $Y_{\tau_{x,v}\wedge\Theta}^{0,x,v}=Y_{\Theta}^{0,x,v}$. Now we give the main result of this section.
\begin{theorem}\label{DPP}
We suppose $(H_{1})$-$(H_{5})$ are satisfied. We also assume that $g\in W^{2,\infty}(D)$ and the existence of a constant $\theta$ such that $\beta^{2}-2\alpha<\theta\leq\mu$ and $\theta< -2[\delta]^{+}$. Then, for any stopping time $\Theta$ such that $Ee^{\mu \Theta}<\infty$, we have
\[
u(x)=\inf\limits_{v\in\mathcal{V}}G_{\tau_{x,v}\wedge\Theta}^{0,x,v}[u(X_{\tau_{x,v}\wedge\Theta}^{0,x,v})].
\]
(Recall that $u(x):=\inf\limits_{v\in\mathcal{V}}Y_{0}^{0,x,v}$; see (\ref{value function})).
\end{theorem}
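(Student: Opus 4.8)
The plan is to follow the backward-semigroup scheme of Peng \cite{P-97}, but with the random exit horizon handled through the essential-infimum representation of Lemma \ref{Lemma for equivalent form of value function}. Set $W(x):=\inf_{v\in\mathcal{V}}G^{0,x,v}_{\tau_{x,v}\wedge\Theta}[u(X^{0,x,v}_{\tau_{x,v}\wedge\Theta})]$; I would prove $u(x)=W(x)$ by establishing the two inequalities separately. Throughout, the monotonicity of the backward semigroup in its terminal datum (a restatement of the comparison theorem, Lemma \ref{comparison theorem}), the stability estimate of Lemma \ref{Stability w.r.t. perturbation}, the relation (\ref{backward semigroup}), and the continuity of $u$ from Theorem \ref{regularity of u} are the basic ingredients. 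The reason one cannot merely transcribe the deterministic-horizon argument of \cite{P-97} is that the restart time $\tau_{x,v}\wedge\Theta$ is a stopping time depending on the control $v$, so Lemma \ref{Lemma for equivalent form of value function} (built with the time-shift operator) is needed to identify $u(X^{0,x,v}_{\tau_{x,v}\wedge\Theta})$ with the essential infimum of the restarted cost.

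For $W(x)\le u(x)$: I would fix an arbitrary $v\in\mathcal{V}$. Applying Lemma \ref{Lemma for equivalent form of value function} at the stopping time $\tau_{x,v}\wedge\Theta$ and at the random starting point $X^{0,x,v}_{\tau_{x,v}\wedge\Theta}$, and noting that the restriction of $v$ to $[\tau_{x,v}\wedge\Theta,\infty)$ is one of the admissible controls over which the essential infimum is computed, I obtain $u(X^{0,x,v}_{\tau_{x,v}\wedge\Theta})\le Y^{0,x,v}_{\tau_{x,v}\wedge\Theta}$, $\mathbb{P}$-a.s. By the monotonicity of the semigroup (Lemma \ref{comparison theorem}) and then (\ref{backward semigroup}), this gives $G^{0,x,v}_{\tau_{x,v}\wedge\Theta}[u(X^{0,x,v}_{\tau_{x,v}\wedge\Theta})]\le G^{0,x,v}_{\tau_{x,v}\wedge\Theta}[Y^{0,x,v}_{\tau_{x,v}\wedge\Theta}]=Y^{0,x,v}_{0}$. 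Taking the infimum over $v\in\mathcal{V}$ yields $W(x)\le\inf_{v}Y^{0,x,v}_{0}=u(x)$.

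For $u(x)\le W(x)$, I would fix $v\in\mathcal{V}$ and $\varepsilon>0$ and paste controls. Using Lemma \ref{Lemma for equivalent form of value function} together with the downward directedness of the family of restarted values (which is stable under pasting of two controls across an $\mathcal{F}_{\tau_{x,v}\wedge\Theta}$-measurable set, so that the essential infimum is the limit of a decreasing sequence), I would construct $\bar v\in\mathcal{V}$ coinciding with $v$ on $[0,\tau_{x,v}\wedge\Theta]$, so that the two controlled states agree up to $\tau_{x,v}\wedge\Theta$, one has $\tau_{x,\bar v}\wedge\Theta=\tau_{x,v}\wedge\Theta$ and $G^{0,x,\bar v}_{\tau_{x,v}\wedge\Theta}=G^{0,x,v}_{\tau_{x,v}\wedge\Theta}$, and such that $Y^{0,x,\bar v}_{\tau_{x,v}\wedge\Theta}\le u(X^{0,x,v}_{\tau_{x,v}\wedge\Theta})+\varepsilon$, $\mathbb{P}$-a.s., where the flow property identifies $Y^{0,x,\bar v}_{\tau_{x,v}\wedge\Theta}$ with the value of the BSDE restarted at $\tau_{x,v}\wedge\Theta$. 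Then, by $u(x)\le Y^{0,x,\bar v}_{0}$, relation (\ref{backward semigroup}) and monotonicity, $u(x)\le Y^{0,x,\bar v}_{0}=G^{0,x,v}_{\tau_{x,v}\wedge\Theta}[Y^{0,x,\bar v}_{\tau_{x,v}\wedge\Theta}]\le G^{0,x,v}_{\tau_{x,v}\wedge\Theta}[u(X^{0,x,v}_{\tau_{x,v}\wedge\Theta})+\varepsilon]$. Finally I would absorb the constant shift with Lemma \ref{Stability w.r.t. perturbation}, applied to the two terminal data over the common stopping time $\tau_{x,v}\wedge\Theta$: the difference of the corresponding semigroup values is bounded in square by $\varepsilon^{2}E[e^{\theta(\tau_{x,v}\wedge\Theta)}]\le C\varepsilon^{2}$, since $\theta<0$ (indeed $\theta<-2[\delta]^{+}\le0$) and $\sup_{x,v}E[e^{\mu\tau_{x,v}}]<\infty$ by $(H_{4}^{\prime})$. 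Hence $u(x)\le G^{0,x,v}_{\tau_{x,v}\wedge\Theta}[u(X^{0,x,v}_{\tau_{x,v}\wedge\Theta})]+C\varepsilon$; letting $\varepsilon\to0$ and taking the infimum over $v$ gives $u(x)\le W(x)$.

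The main obstacle is the pasting construction of $\bar v$ in the second inequality. One has to verify that the family $\{Y^{\tau_{x,v}\wedge\Theta,\,X^{0,x,v}_{\tau_{x,v}\wedge\Theta},\,v'}_{\tau_{x,v}\wedge\Theta}:v'\in\mathcal{V}\}$ is directed downward, extract a countable minimizing sequence, and then make a measurable $\varepsilon$-optimal selection of a continuation control after the random, $v$-dependent time $\tau_{x,v}\wedge\Theta$. It is precisely this $v$-dependence of the horizon, absent in the fixed-terminal-time setting of \cite{P-97}, that forces the use of the time-shift operator and the identity of Lemma \ref{Lemma for equivalent form of value function}; the continuity of $u$ (Theorem \ref{regularity of u}) is what makes $u(X^{0,x,v}_{\tau_{x,v}\wedge\Theta})$ an admissible, suitably integrable terminal datum for the backward semigroup.
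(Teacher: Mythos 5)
Your proposal is correct and follows essentially the same route as the paper: the paper proves the two inequalities separately in Lemmas \ref{lemma 2 for proving DPP} and \ref{lemma 3 for proving DPP}, using the comparison theorem for the first, an $\varepsilon$-optimal pasted control plus the stability estimate of Lemma \ref{Stability w.r.t. perturbation} to absorb the $\varepsilon$-shift for the second, and the time-shift identity of Lemma \ref{Lemma for equivalent form of value function} throughout. The one step you leave implicit is that Lemma \ref{Lemma for equivalent form of value function} is stated only for deterministic starting points, so invoking it at the random state $X^{0,x,v}_{\tau_{x,v}\wedge\Theta}$ requires the extension to $\mathcal{F}_{\Theta}$-measurable initial data (the paper's Lemma \ref{lemma 1 for proving DPP}), obtained by approximating $\xi$ with simple random variables and using the uniform $1/2$-H\"older continuity of $x\mapsto Y^{\Theta,x,v}_{\Theta}$ from Theorem \ref{regularity of u} --- this, rather than integrability of the terminal datum, is where the regularity of $u$ actually enters.
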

\begin{proof}
The theorem can be obtained directly from the following Lemmas \ref{lemma 2 for proving DPP} and \ref{lemma 3 for proving DPP}.
\end{proof}

\medskip

To state the Lemmas \ref{lemma 2 for proving DPP} and \ref{lemma 3 for proving DPP}, we have first to establish two results.
For this end, for a given stopping time $\Theta$, we define the time-shift operator $\pi_{\Theta}:\Omega\rightarrow\Omega$,
\[
\pi_{\Theta}(\omega)_{s}:=\omega(\Theta(\omega)+s)-\omega(\Theta(\omega)), ~\omega\in\Omega
\]
(Recall that $\Omega=C_0(\mathbb{R}_{+};\mathbb{R}^m)$).
We also introduce the filtration $\mathcal{F}_{s}^{\Theta}:=\sigma\{B_{r}^{\Theta}:=B_{\Theta+r}-B_{\Theta}, ~0\leq r\leq s\}\vee\mathcal{N}_{\mathbb{P}}$, $s\ge0$, and we denote by $\mathcal{V}_{\Theta}:=L^{0}_{\mathcal{F}^{\Theta}}(0,+\infty;V)$ the set of all $V$-valued $\{\mathcal{F}_{s}^{\Theta}\}$-progressively measurable processes. Then we have, with the identification of $drd\mathbb{P}$-a.e. coinciding processes,
\begin{equation}\label{shift control space and original control space}
\mathcal{V}_{\Theta}=\mathcal{V}(\pi_{\Theta}):=\left\{v(\pi_{\Theta}),~v\in\mathcal{V}\right\}.
\end{equation}
Indeed, on the one hand, for any $v\in \mathcal{V}_{\Theta}$, there exists a non-anticipating measurable function $\widetilde{v}:\mathbb{R}_{+}\times C_{0}(\mathbb{R}_{+};\mathbb{R}^{m})\rightarrow V$, such that $v_{r}=\widetilde{v}(r,B^{\Theta})$, $drd\mathbb{P}$-a.e.
Let $\hat{v}_{r}:=\widetilde{v}(r,B)$, $r\ge0$. Then $\hat{v}\in\mathcal{V}$ and
$\hat{v}(\pi_{\Theta})=\widetilde{v}(\cdot,B^{\Theta})=v$, $drd\mathbb{P}$-a.e.
Thus, with the identification of control processes which coincides $drd\mathbb{P}$-a.e., we have $\mathcal{V}_{\Theta}\subseteq\mathcal{V}(\pi_{\Theta})$. On the other hand, for all $v\in\mathcal{V}$, there exists a non-anticipating measurable function $\widetilde{v}:\mathbb{R}_{+}\times C_{0}(\mathbb{R}_{+};\mathbb{R}^{m})\rightarrow V$, such that $v_{r}=\widetilde{v}(r,B)$,  $drd\mathbb{P}$-a.e., and $v(\pi_{\Theta})=\widetilde{v}(\cdot,B^{\Theta})\in \mathcal{V}_{\Theta}$. This means that  $\mathcal{V}_{\Theta}\supseteq\mathcal{V}(\pi_{\Theta})$. Therefore, (\ref{shift control space and original control space}) is proved.

\begin{lemma}\label{Lemma for equivalent form of value function}
Under the assumptions $(H_{1})$-$(H_{5})$, for a given stopping time $\Theta$ such that $Ee^{\mu \Theta}<\infty$ and for any $\xi\in L^{2}(\mathcal{F}_{\Theta},\overline{D})$ and $v\in \mathcal{V}$,  we consider
\begin{equation}\label{SDE starting from Theta}
X_{t}^{\Theta,\xi,v}=\xi+\int_{\Theta}^{t}b(X_{s}^{\Theta,\xi,v},v_{s})ds
+\int_{\Theta}^{t}\sigma(X_{s}^{\Theta,\xi,v},v_{s})dB_{s}, \quad t\ge \Theta,
\end{equation}
and we define $\tau_{\Theta,\xi,v}=\inf\{t\ge\Theta: X_{t}^{\Theta,\xi,v}\notin \overline{D}\}$. Then we have, for $\xi=x\in \overline{D}$,
\[
u(x)=\inf\limits_{v\in\mathcal{V}}Y_{0}^{0,x,v}=\mbox{essinf}_{v\in\mathcal{V}}Y_{\Theta}^{\Theta,x,v},\quad \mathbb{P}\text{-}a.s., \quad x\in \overline{D},
\]
where $(Y^{\Theta,\xi,v},Z^{\Theta,\xi,v})$ is the solution of the following BSDE, for $t\ge\Theta$,
\begin{equation}\label{BSDE coupled SDE starting from Theta}
Y_{t}^{\Theta,\xi,v}=g(X_{\tau_{\Theta,\xi,v}}^{\Theta,\xi,v})+\int_{t\wedge\tau_{\Theta,\xi,v}}^{\tau_{\Theta,\xi,v}}
f(X_{s}^{\Theta,\xi,v},Y_{s}^{\Theta,\xi,v},Z_{s}^{\Theta,\xi,v},v_{s})ds
-\int_{t\wedge\tau_{\Theta,\xi,v}}^{\tau_{\Theta,\xi,v}}Z_{s}^{\Theta,\xi,v}dB_{s}.
\end{equation}
We will cite (\ref{SDE starting from Theta}) and (\ref{BSDE coupled SDE starting from Theta}) as SDE and BSDE with initial data $(\Theta,\xi)$, respectively.
\end{lemma}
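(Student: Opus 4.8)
The plan is to establish the identity $u(x)=\mbox{essinf}_{v\in\mathcal{V}}Y_{\Theta}^{\Theta,x,v}$ by proving both inequalities, using the time-shift operator $\pi_{\Theta}$ and the decomposition $\mathcal{V}_{\Theta}=\mathcal{V}(\pi_{\Theta})$ established above. The conceptual heart of the matter is that, by strong uniqueness for the SDE with initial data $(\Theta,x)$, the solution $X^{\Theta,x,v}$ is obtained from $X^{0,x,\hat v}$ by the time shift, where $\hat v$ is the preimage of $v$ under $\pi_{\Theta}$; concretely $X_{\Theta+r}^{\Theta,x,v}=X_{r}^{0,x,\hat v}\circ\pi_{\Theta}$, the exit times satisfy $\tau_{\Theta,x,v}=\Theta+\tau_{x,\hat v}\circ\pi_{\Theta}$, and likewise $(Y^{\Theta,x,v},Z^{\Theta,x,v})$ is the $\pi_{\Theta}$-shift of $(Y^{0,x,\hat v},Z^{0,x,\hat v})$. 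In particular $Y_{\Theta}^{\Theta,x,v}=Y_{0}^{0,x,\hat v}\circ\pi_{\Theta}$.

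First I would make precise the pathwise shift-invariance of the whole system. The Brownian motion driving the shifted SDE is $B_{r}^{\Theta}=B_{\Theta+r}-B_{\Theta}$, which by the strong Markov property is a Brownian motion independent of $\mathcal{F}_{\Theta}$ and generates the filtration $\{\mathcal{F}_{s}^{\Theta}\}$. Writing out equations (\ref{SDE starting from Theta}) and (\ref{BSDE coupled SDE starting from Theta}) in terms of $B^{\Theta}$ and comparing with (\ref{SDE}) and (\ref{BSDE coupled with SDE}), I would invoke strong uniqueness (Theorem \ref{theorem for wellposedness of BSDE coupled with SDE}) to identify the solutions as $\pi_{\Theta}$-shifts of the solutions started at time $0$. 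Since for a fixed deterministic starting point $x\in\overline{D}$ the initial datum $\xi=x$ is trivially $\mathcal{F}_{\Theta}$-measurable, there is no coupling through the initial condition and the shift goes through cleanly.

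Next I would prove the two inequalities. For $\mbox{essinf}_{v\in\mathcal{V}}Y_{\Theta}^{\Theta,x,v}\leq u(x)$: for an arbitrary $v\in\mathcal{V}$, set $\bar v:=v(\pi_{\Theta})\in\mathcal{V}_{\Theta}\subseteq\mathcal{V}$ by (\ref{shift control space and original control space}); then $Y_{\Theta}^{\Theta,x,\bar v}=Y_{0}^{0,x,v}\circ\pi_{\Theta}$, and taking expectations (using that $\pi_{\Theta}$ preserves $\mathbb{P}$ on the shifted coordinates and $Y_{0}^{0,x,v}$ is deterministic) gives $E[Y_{\Theta}^{\Theta,x,\bar v}]=Y_{0}^{0,x,v}=J(x,v)$, whence $\mbox{essinf}_{v}Y_{\Theta}^{\Theta,x,v}\leq Y_{\Theta}^{\Theta,x,\bar v}$ and taking the infimum over $v$ yields the bound. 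For the reverse inequality $u(x)\leq\mbox{essinf}_{v\in\mathcal{V}}Y_{\Theta}^{\Theta,x,v}$, I would fix $v\in\mathcal{V}$, push it through $\pi_{\Theta}$ and recognize $Y_{\Theta}^{\Theta,x,v}$ as $Y_{0}^{0,x,\hat v}\circ\pi_{\Theta}\geq u(x)$ pathwise, since $Y_{0}^{0,x,\hat v}\geq\inf_{v'}Y_{0}^{0,x,v'}=u(x)$ for every realization; taking the essential infimum over $v$ preserves this.

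The main obstacle I expect is the measure-theoretic bookkeeping of the shift, in two places. First, one must justify that every admissible $v\in\mathcal{V}$ (progressively measurable w.r.t.\ $\{\mathcal{F}_{s}\}$) genuinely corresponds, via the non-anticipating functional representation, to an element of $\mathcal{V}_{\Theta}$ and back, and that this correspondence is compatible with the identification of $drd\mathbb{P}$-a.e.\ coinciding processes — this is exactly what (\ref{shift control space and original control space}) provides, so I would lean on it rather than reprove it. Second, and more delicately, the essential infimum over the uncountable family $\{Y_{\Theta}^{\Theta,x,v}\}_{v\in\mathcal{V}}$ must be reconciled with the ordinary infimum defining $u(x)$; here the pathwise inequality $Y_{\Theta}^{\Theta,x,v}\geq u(x)$ a.s.\ gives one bound immediately, while for the matching bound I would need that the family is directed downward (stable under pasting of controls at $\Theta$), so that the essential infimum is attained as an a.s.\ limit of a countable decreasing sequence whose expectations converge to $u(x)$. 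Verifying this stability under concatenation of controls, and that the concatenated control remains in $\mathcal{V}$, is the technically sensitive step.
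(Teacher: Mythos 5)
Your outline follows the paper's strategy (time-shift operator, two inequalities for the essential infimum), and your treatment of the bound $\mbox{essinf}_{v}Y_{\Theta}^{\Theta,x,v}\leq u(x)$ is essentially the paper's: take a near-optimal $v$ for $u(x)$ and transplant it to start at $\Theta$. (Two small repairs are needed there: $\mathcal{V}_{\Theta}\subseteq\mathcal{V}$ is false, since $\mathcal{F}_{s}^{\Theta}\subseteq\mathcal{F}_{\Theta+s}$ but not $\mathcal{F}_{s}$; one must re-index in time and concatenate with an arbitrary control on $[0,\Theta]$, as the paper does with its $\bar v$. Also no expectation is needed, because $Y_{\Theta}^{\Theta,x,\bar v}$ is a.s.\ equal to the deterministic constant $Y_{0}^{0,x,v}$.)

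The genuine gap is in the other inequality, $Y_{\Theta}^{\Theta,x,v}\geq u(x)$ a.s.\ for \emph{every} $v\in\mathcal{V}$. You write ``$Y_{\Theta}^{\Theta,x,v}=Y_{0}^{0,x,\hat v}\circ\pi_{\Theta}$ where $\hat v$ is the preimage of $v$ under $\pi_{\Theta}$,'' but for a general admissible $v$ no such single preimage $\hat v\in\mathcal{V}$ exists: the identification $\mathcal{V}_{\Theta}=\mathcal{V}(\pi_{\Theta})$ only covers controls adapted to the post-$\Theta$ filtration, whereas $v_{\Theta+s}$ may depend on the whole path up to time $\Theta$ as well as on $B^{\Theta}$. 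This is exactly the point the paper flags as ``not obvious at all,'' and it is resolved in Steps 2--3 of its proof by decomposing the Wiener space at $\Theta$ as $(\Omega',\mathbb{P}')\otimes(\Omega'',\mathbb{P}'')$ and freezing the pre-$\Theta$ path: one sets $\hat v^{\omega'}(s,\omega''):=\widetilde v(s+\Theta(\omega'),\omega',\omega'')$, shows $\hat v^{\omega'}(\cdot,B)\in\mathcal{V}$ for each fixed $\omega'$, and obtains $Y_{\Theta}^{\Theta,x,v}(\omega')=\bigl(Y_{0}^{0,x,\hat v^{\omega'}(\cdot,B)}\bigr)(\pi_{\Theta(\omega')})$, a deterministic number $\geq u(x)$ for $\mathbb{P}'$-a.e.\ $\omega'$. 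Your proposal contains no substitute for this conditional-freezing argument. Relatedly, you locate the ``technically sensitive step'' in the downward-directedness of the family $\{Y_{\Theta}^{\Theta,x,v}\}$, but that is not needed for either inequality (one bound follows from a pointwise lower bound valid for all $v$, the other from exhibiting a single $\varepsilon$-optimal control whose value at $\Theta$ is a.s.\ a constant); the actual difficulty is the one described above, which your plan does not address.
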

\begin{proof}
Applying an argument similar to that for Lemma \ref{lemma for assumption} and using $Ee^{\mu\Theta}<\infty$, one can show that $Ee^{\mu\tau_{\Theta,x,v}}<\infty$. Then following the proof of Theorem \ref{theorem for wellposedness of BSDE coupled with SDE}, we can show that $Y_{s}^{\Theta,x,v}$ is well defined.

\textbf{Step 1:}
Let us first show that, for $v\in\mathcal{V}$,
\begin{equation}\label{shifted FBSDE and FBSDE}
(X_{t}^{0,x,v},Y_{t}^{0,x,v},Z_{t}^{0,x,v})(\pi_{\Theta})
=(X_{t}^{(\Theta),x,v^{\Theta}},Y_{t}^{(\Theta),x,v^{\Theta}},Z_{t}^{(\Theta),x,v^{\Theta}}),\quad t\ge0,
\end{equation}
where $(X_{t}^{(\Theta),x,v^{\Theta}},Y_{t}^{(\Theta),x,v^{\Theta}},Z_{t}^{(\Theta),x,v^{\Theta}})$ is the unique solution of SDE (\ref{SDE}) and BSDE (\ref{BSDE coupled with SDE}) driven by $B^{\Theta}$ with control $v^{\Theta}=v(\pi_{\Theta})$,  i.e.
\[
\left\{
\begin{array}
[c]{l}
X_{t}^{(\Theta),x,v^{\Theta}}=x+\displaystyle\int_{0}^{t}b(X_{s}^{(\Theta),x,v^{\Theta}},v_{s}^{\Theta})ds
+\displaystyle\int_{0}^{t}\sigma(X_{s}^{(\Theta),x,v^{\Theta}},v_{s}^{\Theta})dB^{\Theta}_{s},\medskip\\
Y_{t}^{(\Theta),x,v^{\Theta}}=g(X_{\tau_{(\Theta),x,v}}^{(\Theta),x,v^{\Theta}})
+\displaystyle\int_{t\wedge\tau_{(\Theta),x,v}}^{\tau_{(\Theta),x,v}}f(X_{s}^{(\Theta),x,v^{\Theta}},
Y_{s}^{(\Theta),x,v^{\Theta}},Z_{s}^{(\Theta),x,v^{\Theta}},v_{s}^{\Theta})ds\medskip\\
\qquad\qquad\qquad\qquad\qquad\qquad\qquad
-\displaystyle\int_{t\wedge\tau_{(\Theta),x,v}}^{\tau_{(\Theta),x,v}}Z_{s}^{(\Theta),x,v^{\Theta}}dB^{\Theta}_{s},\quad t\ge0,
\end{array}
\right.
\]
where $\tau_{(\Theta),x,v^{\Theta}}=\inf\{t\ge0: X_{t}^{(\Theta),x,v^{\Theta}}\notin \overline{D}\}$.
Indeed, as aforementioned, for any given $v\in\mathcal{V}$, there exists a non-anticipating measurable function $\widetilde{v}:\mathbb{R}_{+}\times C_{0}(\mathbb{R}_{+};\mathbb{R}^{m})\rightarrow V$ such that $v=\widetilde{v}(\cdot,B)$, $ds\times d\mathbb{P}$-a.e.
Then comparing both $(X_{t}^{0,x,v},Y_{t}^{0,x,v},Z_{t}^{0,x,v})(\pi_{\Theta})$ and $(X_{t}^{(\Theta),x,v^{\Theta}},Y_{t}^{(\Theta),x,v^{\Theta}},Z_{t}^{(\Theta),x,v^{\Theta}})$, we obtain (\ref{shifted FBSDE and FBSDE}) easily from the uniqueness of the solution to above system of equations. Related with, we get
\begin{equation}\label{shifted exit time and exit time}
\tau_{(\Theta),x,v^{\Theta}}=(\tau_{x,v})(\pi_{\Theta}).
\end{equation}

\textbf{Step 2:}  We recall that we work on the classical Wiener space $(\Omega,\mathcal{F},\mathbb{P})$, where $\Omega:=C_{0}(\mathbb{R}_{+};\mathbb{R}^{m})$, $\mathbb{P}$ is Wiener measure and $\mathcal{F}:=\mathcal{B}(\Omega)\vee \mathcal{N}_{\mathbb{P}}$. Then a given stopping time $\Theta:\Omega\rightarrow\mathbb{R}_{+}$ defines the following canonical decomposition:
\[
(\Omega,\mathcal{F},\mathbb{P})\equiv (\Omega^{\prime},\mathcal{F}^{\prime},\mathbb{P}^{\prime})
\otimes(\Omega^{\prime\prime},\mathcal{F}^{\prime\prime},\mathbb{P}^{\prime\prime}),
\]
where $\Omega^{\prime}=\Omega^{\prime\prime}=\Omega$, $\mathbb{P}^{\prime}:=\mathbb{P}_{B_{\wedge\Theta}}$ ($B_{\wedge\Theta}$ denotes the stopped Brownian motion $B_{t\wedge\Theta}=\omega(t\wedge\Theta(\omega))$, $\omega\in\Omega$), $\mathbb{P}^{\prime\prime}:=\mathbb{P}_{B^{\Theta}}=\mathbb{P}$,  $\mathcal{F}^{\prime}=\mathcal{B}(\Omega)\vee \mathcal{N}_{\mathbb{P}^{\prime}}$ and  $\mathcal{F}^{\prime\prime}=\mathcal{B}(\Omega)\vee \mathcal{N}_{\mathbb{P}^{\prime\prime}}$. For $\omega\in \Omega$, we have $\omega\equiv(\omega^{\prime},\omega^{\prime\prime})\in\Omega^{\prime}\otimes\Omega^{\prime\prime}$, where
$\omega^{\prime}(s)=\omega_{\wedge\Theta}(s):=\omega(s\wedge\Theta(\omega))$ and
$\omega^{\prime\prime}(s)=\omega(\Theta(\omega)+s)-\omega(\Theta(\omega))$, $s\ge0$. This leads for $(\omega^{\prime},\omega^{\prime\prime})\in\Omega^{\prime}\otimes\Omega^{\prime\prime}$ to the identification
$\omega(s)\equiv \omega^{\prime}(s)+\omega^{\prime\prime}((s-\Theta(\omega^{\prime})^{+})$, $s\ge0$.

Recalling now that $v=\widetilde{v}(\cdot,B)$, we set
\[
\hat{v}^{\omega^{\prime}}(s,\omega^{\prime\prime}):=\widetilde{v}(s+\Theta(\omega^{\prime}),\omega^{\prime},\omega^{\prime\prime}), \quad
\omega=(\omega^{\prime},\omega^{\prime\prime}).
\]
We observe that, for all $\omega^{\prime}\in\Omega^{\prime}$, $\hat{v}^{\omega^{\prime}}$ is  a measurable, non-anticipating function over $\mathbb{R}_{+}\times\Omega^{\prime\prime}$. This has, in particular, as consequence that $\hat{v}^{\omega^{\prime}}(\cdot,B)\in \mathcal{V}$.

We claim that for $\mathbb{P}^{\prime}$-almost all $\omega^{\prime}\in\Omega^{\prime}$, $\mathbb{P}$-a.s.
\begin{equation}\label{shifted SDE and SDE}
X_{\Theta+t}^{\Theta,x,v}(\omega^{\prime},\cdot)=X_{t}^{(\Theta),x,\hat{v}^{\omega^{\prime}}(\cdot,B^{\Theta})}
=(X_{t}^{0,x,\hat{v}^{\omega^{\prime}}(\cdot,B)})(\pi_{\Theta(\omega^{\prime})}), \quad t\ge0.
\end{equation}
Indeed, recall that
\[
X_{\Theta+t}^{\Theta,x,v}=x+\int_{\Theta}^{\Theta+t}b(X_{s}^{\Theta,x,v},v_{s})ds
+\int_{\Theta}^{\Theta+t}\sigma(X_{s}^{\Theta,x,v},v_{s})dB_{s}.
\]
Then using $\hat{v}^{\omega^{\prime}}(s,B^{\Theta}(\omega))=\widetilde{v}(s+\Theta(\omega^{\prime}),\omega^{\prime},B^{\Theta}(\omega))$, we have for $\mathbb{P}^{\prime}$-almost all $\omega^{\prime}\in\Omega^{\prime}$, $\mathbb{P}$-a.s.,
\[
\begin{array}[c]{rl}
X_{\Theta+t}^{\Theta,x,v}(\omega^{\prime},\cdot)&
=x+\left(\displaystyle\int_{\Theta}^{\Theta+t}b(X_{s}^{\Theta,x,v},\widetilde{v}(s,B))ds\right)(\omega^{\prime},\cdot)
+\left(\displaystyle\int_{\Theta}^{\Theta+t}\sigma(X_{s}^{\Theta,x,v},\widetilde{v}(s,B))dB_{s}\right)(\omega^{\prime},\cdot),\medskip\\
&=x+\displaystyle\int_{0}^{t}b(X_{\Theta+s}^{\Theta,x,v}(\omega^{\prime},\cdot),\hat{v}^{\omega^{\prime}}(s,B^{\Theta}))ds
+\displaystyle\int_{0}^{t}\sigma(X_{\Theta+s}^{\Theta,x,v}(\omega^{\prime},\cdot),\hat{v}^{\omega^{\prime}}(s,B^{\Theta}))dB_{s}^{\Theta},
\end{array}
\]
$t\ge0$. From the uniqueness of the solution we get
\[
X_{\Theta+t}^{\Theta,x,v}(\omega^{\prime},\cdot)=X_{t}^{(\Theta),x,\hat{v}^{\omega^{\prime}}(\cdot,B^{\Theta})}
\quad t\ge0 \quad \mathbb{P}\text{-}a.s., ~ \mathbb{P}^{\prime}(d\omega^\prime)\text{-}a.s.
\]
Then (\ref{shifted SDE and SDE}) is obtained by combining the above equation with (\ref{shifted FBSDE and FBSDE}).

We emphasise that, from above discussion, we know that for any stopping time $\tau$, it follows that
\begin{equation}\label{shifted SDE and SDE 1}
X_{\Theta+\tau}^{\Theta,x,v}(\omega^{\prime},\cdot)=X_{\tau}^{(\Theta),x,\hat{v}^{\omega^{\prime}}(\cdot,B^{\Theta})}
\quad \mathbb{P}\text{-}a.s., ~ \mathbb{P}^{\prime}(d\omega^\prime)\text{-}a.s.
\end{equation}
Moreover, for $\mathbb{P}^{\prime}$-almost all $\omega^{\prime}\in\Omega^{\prime}$, $\mathbb{P}$-a.s.
\begin{equation}\label{shifted exit time and exit time 1}
\tau_{\Theta,x,v}(\omega^{\prime},\cdot)= (\tau_{x,\hat{v}^{\omega^{\prime}}(\cdot,B)})(\pi_{\Theta(\omega^{\prime})})+\Theta(\omega^{\prime})
=\tau_{(\Theta),x,\hat{v}^{\omega^{\prime}}(\cdot,B^{\Theta})}+\Theta(\omega^{\prime}).
\end{equation}
Indeed
\[
\tau_{\Theta,x,v}(\omega^{\prime},\cdot)=\inf\{t\ge\Theta: X_{t}^{\Theta,x,v}\notin \overline{D}\}(\omega^{\prime},\cdot)
=\inf\{t\ge0: X_{\Theta+t}^{\Theta,x,v}(\omega^{\prime},\cdot)\notin \overline{D}\}+\Theta(\omega^{\prime}),
\]
and using (\ref{shifted exit time and exit time}) and (\ref{shifted SDE and SDE}), we obtain  $\mathbb{P}^{\prime}(d\omega^\prime)$-a.s., $\mathbb{P}$-a.s.,
\[
\begin{array}[c]{rl}
\tau_{\Theta,x,v}(\omega^{\prime},\cdot)
&=\inf\{t\ge0: X_{t}^{0,x,\hat{v}^{\omega^{\prime}}(\cdot,B)}\notin \overline{D}\}(\pi_{\Theta(\omega^{\prime})})+\Theta(\omega^{\prime})\medskip\\
&=(\tau_{x,\hat{v}^{\omega^{\prime}}(\cdot,B)})(\pi_{\Theta(\omega^{\prime})})+\Theta(\omega^{\prime})\medskip\\
&=\tau_{(\Theta),x,\hat{v}^{\omega^{\prime}}(\cdot,B^{\Theta})}+\Theta(\omega^{\prime}).
\end{array}
\]

\textbf{Step 3:} In this step we prove that  $\mathbb{P}^{\prime}(d\omega^\prime)$-a.s., $\mathbb{P}$-a.s.,
\[
Y_{\Theta+t}^{\Theta,x,v}(\omega^{\prime},\cdot)=Y_{t}^{(\Theta),x,\hat{v}^{\omega^{\prime}}(\cdot,B^{\Theta})}
=(Y_{t}^{0,x,\hat{v}^{\omega^{\prime}}(\cdot,B)})(\pi_{\Theta(\omega^{\prime})}), \quad t\ge0.
\]
Using(\ref{shifted SDE and SDE 1}) and (\ref{shifted exit time and exit time 1}), the equation
\[
Y_{\Theta+t}^{\Theta,x,v}=\int_{(\Theta+t)\wedge\tau_{\Theta,x,v}}^{\tau_{\Theta,x,v}}
f(X_{s}^{\Theta,x,v},Y_{s}^{\Theta,x,v},Z_{s}^{\Theta,x,v},v_{s})ds-
\int_{(\Theta+t)\wedge\tau_{\Theta,x,v}}^{\tau_{\Theta,x,v}}Z_{s}^{\Theta,x,v}dB_{s}+g(X_{\tau_{\Theta,x,v}}^{\Theta,x,v}),
\]
$t\ge0$, takes the form
\[
\begin{array}[c]{rl}
Y_{\Theta+t}^{\Theta,x,v}(\omega^{\prime},\cdot)
&=\displaystyle\int_{t\wedge\tau_{(\Theta),x,\hat{v}^{\omega^{\prime}}(\cdot,B^{\Theta})}}^{\tau_{(\Theta),x,\hat{v}^{\omega^{\prime}}(\cdot,B^{\Theta})}}
f(X^{(\Theta),x,\hat{v}^{\omega^{\prime}}(\cdot,B^{\Theta})}_{s},
Y_{\Theta+s}^{\Theta,x,v}(\omega^{\prime},\cdot),Z_{\Theta+s}^{\Theta,x,v}(\omega^{\prime},\cdot),\hat{v}^{\omega^{\prime}}(s,B^{\Theta}))ds\medskip\\
&\qquad
-\displaystyle\int_{t\wedge\tau_{(\Theta),x,\hat{v}^{\omega^{\prime}}(\cdot,B^{\Theta})}}^{\tau_{(\Theta),x,\hat{v}^{\omega^{\prime}}(\cdot,B^{\Theta})}}
Z_{\Theta+s}^{\Theta,x,v}(\omega^{\prime},\cdot)dB_{s}^{\Theta}
+g(X^{(\Theta),x,\hat{v}^{\omega^{\prime}}(\cdot,B^{\Theta})}_{\tau_{(\Theta),x,\hat{v}^{\omega^{\prime}}(\cdot,B^{\Theta})}}), \quad t\ge0,
\end{array}
\]
$\mathbb{P}$-a.s., $\mathbb{P}^{\prime}(d\omega^\prime)$\text{-}a.s.,  and the uniqueness of the solution yields that
\[
Y_{\Theta+t}^{\Theta,x,v}(\omega^{\prime},\cdot)=Y_{t}^{(\Theta),x,\hat{v}^{\omega^{\prime}}(\cdot,B^{\Theta})},
\quad \mathbb{P}\text{-}a.s., ~ \mathbb{P}^{\prime}(d\omega^\prime)\text{-}a.s.
\]
Finally, (\ref{shifted FBSDE and FBSDE}) allows to conclude. Remark that in particular, for $t=0$, we have
\begin{equation}\label{shifted BSDE and BSDE time zero value}
Y_{\Theta}^{\Theta,x,v}(\omega^{\prime})=Y_{0}^{(\Theta),x,\hat{v}^{\omega^{\prime}}(\cdot,B^{\Theta})}
=(Y_{0}^{0,x,\hat{v}^{\omega^{\prime}}(\cdot,B)})(\pi_{\Theta(\omega^{\prime})}),
\quad \mathbb{P}^{\prime}(d\omega^\prime)\text{-}a.s.
\end{equation}
(Recall that $Y_{\Theta}^{\Theta,x,v}$ is $\mathcal{F}_{\Theta}$-measurable).

\textbf{Step 4:} Finally, we have
\[
u(x)=\inf_{\bar{v}\in\mathcal{V}}Y_{0}^{0,x,\bar{v}}=\mbox{essinf}_{v\in\mathcal{V}}Y_{\Theta}^{\Theta,x,v}, \quad\mathbb{P}\text{-}a.s.
\]
Indeed, let $v\in\mathcal{V}$. Then due to (\ref{shifted BSDE and BSDE time zero value}), $Y_{\Theta}^{\Theta,x,v}(\omega^{\prime})=Y_{0}^{0,x,\hat{v}^{\omega^{\prime}}(\cdot,B)}(\pi_{\Theta(\omega^{\prime})})$,  $\mathbb{P}^{\prime}(d\omega^\prime)\text{-}a.s.$ Recalling that $\hat{v}^{\omega^{\prime}}(\cdot,B)\in \mathcal{V}$, $\omega^\prime\in\Omega^\prime$,
and $u(x)$ as well as $Y_{0}^{0,x,\bar{v}}$, $\bar{v}\in\mathcal{V}$, are deterministic, it follows that
\[
Y_{\Theta}^{\Theta,x,v}(\omega)=Y_{\Theta}^{\Theta,x,v}(\omega^{\prime})\ge
\mbox{essinf}_{\bar{v}\in\mathcal{V}}(Y_{0}^{0,x,\bar{v}})(\pi_{\Theta(\omega^{\prime})})=u(x), \quad
\mathbb{P}(d\omega)\text{-}a.s.
\]
i.e., for the essential infimum under the probability $\mathbb{P}$,
\begin{equation}\label{equ 1}
\mbox{essinf}_{v\in\mathcal{V}}Y_{\Theta}^{\Theta,x,v}\ge u(x),\quad \mathbb{P}\text{-}a.s.
\end{equation}

On the other hand, let $\varepsilon>0$ and $v\in\mathcal{V}$ be such that $Y_{0}^{0,x,v}\leq u(x)+\varepsilon$ (Recall that $Y_{0}^{0,x,v}$ is deterministic). Then, for $\widetilde{v}$ which is a measurable, non-anticipating function on $\mathbb{R}_{+}\times\Omega$ such that $v=\widetilde{v}(\cdot,B)$, $drd\mathbb{P}$-a.e., using (\ref{shifted BSDE and BSDE time zero value}) we have
\[
u(x)+\varepsilon\ge Y_{0}^{0,x,v}=(Y_{0}^{0,x,\widetilde{v}(\cdot,B)})(\pi_{\Theta(\omega^{\prime})})
=Y_{\Theta}^{\Theta,x,\bar{v}}(\omega^\prime), \quad \mathbb{P}^{\prime}(d\omega^\prime)\text{-}a.s.
\]
and hence, for $\omega=(\omega^{\prime},\omega^{\prime\prime})$, $\mathbb{P}(d\omega)\text{-}a.s.$ Here, $\bar{v}\in\mathcal{V}$ is defined as follows: for some arbitrarily fixed $v_0\in V$, for $\omega=(\omega^{\prime},\omega^{\prime\prime})$,
\[
\bar{v}(s,\omega)=\bar{v}(s,\omega^{\prime},\omega^{\prime\prime})=\left\{
\begin{array}
[c]{rl}
& v_0,  \quad s\in[0,\Theta(\omega^\prime)],\medskip\\
& \widetilde{v}(s-\Theta(\omega^{\prime}),\omega^{\prime\prime})=\widetilde{v}(s-\Theta(\omega^{\prime}),B^{\Theta(\omega^\prime)}(\omega)), \quad s\in[\Theta(\omega^\prime),\infty).
\end{array}
\right.
\]
Consequently, with respect to the essinf under $\mathbb{P}$,
\[
u(x)+\varepsilon\ge \mbox{essinf}_{v\in\mathcal{V}}Y_{\Theta}^{\Theta,x,v},\quad \mathbb{P}\text{-}a.s.,
\]
and taking into account the arbitrariness of $\varepsilon>0$, we obtain
\[
u(x)\ge \mbox{essinf}_{v\in\mathcal{V}}Y_{\Theta}^{\Theta,x,v},\quad \mathbb{P}\text{-}a.s.
\]
Combined with (\ref{equ 1}), this yields the relation we had to show.
\end{proof}
\begin{lemma}\label{lemma 1 for proving DPP}
Under the assumptions of Theorem \ref{DPP},  let $\Theta$ be a stopping time with $Ee^{\mu \Theta}<\infty$ and $\xi\in L^{2}(\mathcal{F}_{\Theta};\mathbb{R}^{d})$. Then, for all $v\in\mathcal{V}$, we have
\begin{equation}\label{inequality 1 for proving DPP}
u(\xi)\leq Y_{\Theta}^{\Theta,\xi,v}, \quad \mathbb{P}\text{-}a.s.
\end{equation}
Conversely, for all $\varepsilon>0$, there exists $v^{\varepsilon}\in\mathcal{V}$, such that
\begin{equation}\label{inequality 2 for proving DPP}
u(\xi)+\varepsilon\ge Y_{\Theta}^{\Theta,\xi,v^{\varepsilon}},\quad \mathbb{P}\text{-}a.s.
\end{equation}
\end{lemma}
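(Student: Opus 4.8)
The plan is to transfer the deterministic-initial-data identity of Lemma~\ref{Lemma for equivalent form of value function}, $u(x)=\mbox{essinf}_{v\in\mathcal{V}}Y_{\Theta}^{\Theta,x,v}$ for $x\in\overline{D}$, to a random initial datum $\xi$ (taking values in $\overline{D}$), first for step random variables and then by approximation. A direct evaluation of Lemma~\ref{Lemma for equivalent form of value function} at $x=\xi(\omega)$ is not legitimate, because that identity holds for each fixed $x$ only outside an $x$-dependent $\mathbb{P}$-null set, and the union of these null sets over the uncountable range of $\xi$ need not be negligible; passing through countably-valued approximations is what repairs this. The basic tool is a freezing/local property: for a step variable $\xi=\sum_{i=1}^{N}x_{i}\mathbf{1}_{A_{i}}$ with $\{A_{i}\}\subset\mathcal{F}_{\Theta}$ a finite partition and $x_{i}\in\overline{D}$, multiplying SDE~(\ref{SDE starting from Theta}) and BSDE~(\ref{BSDE coupled SDE starting from Theta}) by $\mathbf{1}_{A_{i}}$ (which, being $\mathcal{F}_{\Theta}$-measurable, commutes with the integrals over $[\Theta,\cdot]$) and using uniqueness gives $\mathbf{1}_{A_{i}}X^{\Theta,\xi,v}=\mathbf{1}_{A_{i}}X^{\Theta,x_{i},v}$, hence $\mathbf{1}_{A_{i}}\tau_{\Theta,\xi,v}=\mathbf{1}_{A_{i}}\tau_{\Theta,x_{i},v}$ and $\mathbf{1}_{A_{i}}Y^{\Theta,\xi,v}=\mathbf{1}_{A_{i}}Y^{\Theta,x_{i},v}$.

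For (\ref{inequality 1 for proving DPP}) with a step $\xi$, on each $A_{i}$ the freezing property and Lemma~\ref{Lemma for equivalent form of value function} give $Y_{\Theta}^{\Theta,\xi,v}=Y_{\Theta}^{\Theta,x_{i},v}\ge\mbox{essinf}_{v'}Y_{\Theta}^{\Theta,x_{i},v'}=u(x_{i})=u(\xi)$, and summing over $i$ yields $u(\xi)\le Y_{\Theta}^{\Theta,\xi,v}$. For general $\xi$ I would use compactness of $\overline{D}$ to choose step variables $\xi_{n}\to\xi$ uniformly (hence $\mathbb{P}$-a.s.) and pass to the limit in $u(\xi_{n})\le Y_{\Theta}^{\Theta,\xi_{n},v}$. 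Both convergences are controlled by $|\xi_{n}-\xi|^{1/2}$: $u(\xi_{n})\to u(\xi)$ by the $\tfrac12$-Hölder continuity of $u$ (Theorem~\ref{regularity of u}) applied pathwise, while $|Y_{\Theta}^{\Theta,\xi_{n},v}-Y_{\Theta}^{\Theta,\xi,v}|\le C|\xi_{n}-\xi|^{1/2}$ follows by freezing $\xi$ and $\xi_{n}$ on $\mathcal{F}_{\Theta}$, writing both through the shift representation~(\ref{shifted BSDE and BSDE time zero value}) with the \emph{same} control $\hat{v}^{\omega^{\prime}}$, and bounding by $\sup_{v\in\mathcal{V}}|Y_{0}^{0,a,v}-Y_{0}^{0,a',v}|\le C|a-a'|^{1/2}$ from Theorem~\ref{regularity of u}.

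For the converse (\ref{inequality 2 for proving DPP}) I would again begin with a step $\xi=\sum_{i}x_{i}\mathbf{1}_{A_{i}}$. The construction in Step~4 of Lemma~\ref{Lemma for equivalent form of value function} supplies, for each $i$ and each $\varepsilon>0$, a control $\bar{v}^{i}\in\mathcal{V}$ with $Y_{\Theta}^{\Theta,x_{i},\bar{v}^{i}}\le u(x_{i})+\varepsilon$, $\mathbb{P}$-a.s. The pasted process $v^{\varepsilon}:=\sum_{i}\mathbf{1}_{A_{i}}\bar{v}^{i}$ lies in $\mathcal{V}$ (each $A_{i}\in\mathcal{F}_{\Theta}$ and each $\bar{v}^{i}$ is progressively measurable), and by the freezing property $Y_{\Theta}^{\Theta,\xi,v^{\varepsilon}}=Y_{\Theta}^{\Theta,x_{i},\bar{v}^{i}}\le u(x_{i})+\varepsilon=u(\xi)+\varepsilon$ on $A_{i}$. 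For general $\xi$ I would approximate uniformly by a step variable $\xi_{\delta}$ with $|\xi-\xi_{\delta}|\le\delta$, choose $\delta$ so small that both $|u(\xi)-u(\xi_{\delta})|$ and $\sup_{v\in\mathcal{V}}|Y_{\Theta}^{\Theta,\xi,v}-Y_{\Theta}^{\Theta,\xi_{\delta},v}|$ are below $\varepsilon/2$ (the same two Hölder bounds as above), and apply the step-function construction to $\xi_{\delta}$ with accuracy $\varepsilon/2$.

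The main obstacle is to make the passage from deterministic to random initial data uniform in the control: since $\tau_{\Theta,\xi,v}$ depends on the random starting point, a naive use of the stability estimate (Lemma~\ref{Stability w.r.t. perturbation}) would force one to control the difference of exit times over all of $\mathcal{V}$ and all $\omega$, repeating the delicate computation of Theorem~\ref{regularity of u}. The freezing-plus-shift device is what circumvents this: conditioning on $\mathcal{F}_{\Theta}$ via the canonical decomposition $\Omega=\Omega^{\prime}\otimes\Omega^{\prime\prime}$ of Lemma~\ref{Lemma for equivalent form of value function} freezes $\xi$ to a constant and transports the \emph{already proved} deterministic Hölder estimate of Theorem~\ref{regularity of u} to the random setting, uniformly in $v$. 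The remaining subtlety is purely bookkeeping: checking that the pasted near-optimal control $v^{\varepsilon}$ is genuinely admissible and that its $\varepsilon$-optimality is preserved as $\delta\to0$.
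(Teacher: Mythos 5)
Your proposal is correct and follows essentially the same architecture as the paper's proof: establish the $\tfrac12$-H\"older estimate $|Y_{\Theta}^{\Theta,\xi,v}-Y_{\Theta}^{\Theta,\xi^{\prime},v}|\leq C|\xi-\xi^{\prime}|^{1/2}$ uniformly in $v$ by conditioning on $\mathcal{F}_{\Theta}$ and transporting Theorem \ref{regularity of u} through the shift representation (\ref{shifted BSDE and BSDE time zero value}); reduce to countably/finitely valued $\xi$ via the freezing property $1_{A_{i}}Y^{\Theta,\xi,v}=1_{A_{i}}Y^{\Theta,x_{i},v}$ (Peng's argument); and paste controls over the partition for the converse inequality. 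The one place you genuinely diverge is in producing the $\varepsilon$-optimal control for each atom $x_{i}$: the paper only uses the characterization $u(x_{i})=\mbox{essinf}_{v}Y_{\Theta}^{\Theta,x_{i},v}$, so it must extract a countable minimizing family $\{v^{i,j}\}_{j}$ and paste it over the sets $\Gamma_{i,j}:=\tilde\Gamma_{i,j}\setminus\cup_{l<j}\tilde\Gamma_{i,l}$ to build $v^{i,\varepsilon}$; you instead invoke the explicit control $\bar v^{i}$ already constructed in Step 4 of Lemma \ref{Lemma for equivalent form of value function}, which satisfies $Y_{\Theta}^{\Theta,x_{i},\bar v^{i}}\leq u(x_{i})+\varepsilon$ $\mathbb{P}$-a.s.\ outright. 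That shortcut is legitimate and removes one layer of measurable pasting; what it costs is nothing beyond what the earlier lemma already provides. Two small points deserve the same care in your write-up as in the paper's: the pasted control must be modified (e.g.\ set to a constant $v_{0}$) on $[0,\Theta)$ so that $\{\mathcal{F}_{s}\}$-progressive measurability actually holds (the indicator $1_{A_{i}}$ is only $\mathcal{F}_{\Theta}$-measurable), and the H\"older estimates of Theorem \ref{regularity of u} are only available for initial points in $\overline{D}$, so the approximating step variables must be chosen $\overline{D}$-valued, as you indeed do.
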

\begin{proof}
Let $\xi,\xi^{\prime}\in L^{2}(\mathcal{F}_{\Theta};\mathbb{R}^{d})$. Then with the notations introduced in the proof of Theorem \ref{Lemma for equivalent form of value function},  we have $\xi(\omega)=\xi(\omega^{\prime})$ and $\xi^{\prime}(\omega)=\xi^{\prime}(\omega^{\prime})$, for $\omega\equiv(\omega^{\prime},\omega^{\prime\prime})\in\Omega^{\prime}\otimes\Omega^{\prime\prime}$.
Therefore, for $Y^{\Theta,\xi,v}$ and $Y^{\Theta,\xi^\prime,v}$ defined in Lemma \ref{Lemma for equivalent form of value function}, similarly to (\ref{shifted BSDE and BSDE time zero value}) we see that
\[
Y_{\Theta}^{\Theta,\xi,v}(\omega^{\prime})=Y_{0}^{(\Theta),\xi(\omega^{\prime}),\hat{v}^{\omega^{\prime}}(\cdot,B^{\Theta})}
=Y_{0}^{0,\xi(\omega^{\prime}),\hat{v}^{\omega^{\prime}}(\cdot,B)}(\pi_{\Theta(\omega^{\prime})}),
\quad \mathbb{P}^{\prime}(d\omega^\prime)\text{-}a.s.,
\]
and
\[
Y_{\Theta}^{\Theta,\xi^\prime,v}(\omega^{\prime})=Y_{0}^{(\Theta),\xi^\prime(\omega^{\prime}),\hat{v}^{\omega^{\prime}}(\cdot,B^{\Theta})}
=Y_{0}^{0,\xi^\prime(\omega^{\prime}),\hat{v}^{\omega^{\prime}}(\cdot,B}(\pi_{\Theta(\omega^{\prime})}),
\quad \mathbb{P}^{\prime}(d\omega^\prime)\text{-}a.s.
\]
Then, for all $v\in\mathcal{V}$, $\mathbb{P}^{\prime}(d\omega^{\prime})$-a.s.,
\[
\begin{array}[c]{rl}
\left|Y_{\Theta}^{\Theta,\xi,v}(\omega^{\prime})-Y_{\Theta}^{\Theta,\xi^{\prime},v}(\omega^{\prime})\right|
&=\left|\left(Y_{0}^{0,\xi(\omega^{\prime}),\hat{v}^{\omega^{\prime}}(\cdot,B)}
-Y_{0}^{0,\xi^{\prime}(\omega^{\prime}),\hat{v}^{\omega^{\prime}}(\cdot,B)}\right)(\pi_{\Theta(\omega^{\prime})})\right|\medskip\\
&=\left|Y_{0}^{0,\xi(\omega^{\prime}),\hat{v}^{\omega^{\prime}}(\cdot,B)}
-Y_{0}^{0,\xi^{\prime}(\omega^{\prime}),\hat{v}^{\omega^{\prime}}(\cdot,B)}\right|
\end{array}
\]
where we used the fact that for fixed $\omega^{\prime}\in\Omega^{\prime}$, $Y_{0}^{0,\xi(\omega^{\prime}),\hat{v}^{\omega^{\prime}}(\cdot,B)}$ and $Y_{0}^{0,\xi^{\prime}(\omega^{\prime}),\hat{v}^{\omega^{\prime}}(\cdot,B)}$ are deterministic. On the other hand, from Theorem \ref{regularity of u} it follows
\[
\left|Y_{0}^{0,\xi(\omega^{\prime}),\hat{v}^{\omega^{\prime}}(\cdot,B)}
-Y_{0}^{0,\xi^{\prime}(\omega^{\prime}),\hat{v}^{\omega^{\prime}}(\cdot,B)}\right|
\leq C|\xi(\omega^{\prime})-\xi^{\prime}(\omega^{\prime})|^{1/2},
\]
for a constant $C$ independent of $\omega^\prime\in\Omega^\prime$. Consequently,  for all $\xi,\xi^{\prime}\in L^{2}(\mathcal{F}_{\Theta};\mathbb{R}^{d})$ and $v\in\mathcal{V}$,
\[
\left|Y_{\Theta}^{\Theta,\xi,v}-Y_{\Theta}^{\Theta,\xi^{\prime},v}\right|\leq
C|\xi-\xi^{\prime}|^{1/2},\quad \mathbb{P}\text{-}a.s.
\]
Thus, in order to prove (\ref{inequality 1 for proving DPP}), we only need to show that $u(\xi)\leq Y_{\Theta}^{\Theta,\xi,v}$, $\mathbb{P}$-a.s., for all $\xi$ taking the form
$\xi=\sum_{i=1}^{\infty}1_{A_{i}}x_{i}$,
where $\{A_{i}\}_{i=1}^{\infty}$ is a partition of $(\Omega,\mathcal{F}_{\Theta})$ and $x_{i}\in\mathbb{R}^{d}$, $i\ge1$. Following the argument of Peng \cite{P-97}, the uniqueness of the solution of SDE and BSDE with initial data $(\Theta,\xi)$ yields
\[
Y_{\Theta}^{\Theta,\xi,v}=\sum_{i=1}^{\infty}1_{A_{i}}Y_{\Theta}^{\Theta,x_{i},v}.
\]
From Lemma \ref{Lemma for equivalent form of value function}
we know $u(x)=\mbox{essinf}_{v\in\mathcal{V}}Y_{\Theta}^{\Theta,x,v}$, $x\in\mathbb{R}^{d}$.
Hence,
\[
u(\xi)=u(\sum_{i=1}^{\infty}1_{A_{i}}x_{i})=\sum_{i=1}^{\infty}1_{A_{i}}u(x_{i})
\leq \sum_{i=1}^{\infty}1_{A_{i}} Y_{\Theta}^{\Theta,x_{i},v}=Y_{\Theta}^{\Theta,\xi,v},\quad \mathbb{P}\text{-}a.s.
\]
We have proved (\ref{inequality 1 for proving DPP}). Now let us show (\ref{inequality 2 for proving DPP}). For $\xi\in L^{2}(\mathcal{F}_{\Theta};\mathbb{R}^{d})$ we construct the random variable
$\eta:=\sum_{i=1}^{\infty}1_{A_{i}}x_{i}\in L^{2}(\mathcal{F}_{\Theta};\mathbb{R}^{d})$, where $\{A_{i}\}_{i=1}^{\infty}$ is a partition of $(\Omega,\mathcal{F}_{\Theta})$ and $x_{i}\in\mathbb{R}^{d}$, $i\ge1$  s.t.
\[
|\eta-\xi|\leq \frac{1}{C^{2}}\left(\frac{\varepsilon}{3}\right)^{2},
\]
where $C$ is the constant as in Theorem \ref{regularity of u}.
Then, from the 1/2-H\"{o}lder continuity of $u(x)$ and $Y_{\Theta}^{\Theta,x,v}$ w.r.t. $x$ we have
\begin{equation}\label{estimate 1}
|u(\xi)-u(\eta)|\leq \frac{\varepsilon}{3}, \quad |Y_{\Theta}^{\Theta,\xi,v}-Y_{\Theta}^{\Theta,\eta,v}|\leq\frac{\varepsilon}{3}, \quad a.s.
\end{equation}
From Lemma \ref{Lemma for equivalent form of value function} we know $u(x)=\mbox{essinf}_{v\in\mathcal{V}}Y_{\Theta}^{\Theta,x,v}$, $\mathbb{P}$-a.s.
Thus, for every $i\ge1$, there exist a sequence $\{v^{i,j}\}_{j\ge1}\subset\mathcal{V}$ such that
$u(x_{i})=\inf_{j\ge1}Y_{\Theta}^{\Theta,x_{i},v^{i,j}}$, $\mathbb{P}$-a.s.
We define $\tilde{\Gamma}_{i,j}:=\{u(x_{i})+\frac{\varepsilon}{3}\ge Y_{\Theta}^{\Theta,x_{i},v^{i,j}}\}\in \mathcal{F}_{\Theta}$, $j\ge1$. Then
$\Gamma_{i,1}:=\tilde{\Gamma}_{i,1}$, $\Gamma_{i,j}:=\tilde{\Gamma}_{i,j}\setminus\cup_{l=1}^{j-1}\tilde{\Gamma}_{i,l}$, $j\ge2$, is a  partition of $(\Omega,\mathcal{F}_{\Theta})$.  Let $v^{i,\varepsilon}:=\sum_{j\ge1}1_{\Gamma_{i,j}}v^{i,j}\in \mathcal{V}$. Then, following  again Peng's argument \cite{P-97}, we have $Y_{\Theta}^{\Theta,x_{i},v^{i,\varepsilon}}=\sum_{j\ge1}1_{\Gamma_{i,j}}Y_{\Theta}^{\Theta,x_{i},v^{i,j}}$. Thus
\[
Y_{\Theta}^{\Theta,x_{i},v^{i,\varepsilon}}=\sum_{j\ge1}1_{\Gamma_{i,j}}Y_{\Theta}^{\Theta,x_{i},v^{i,j}}
\leq \sum_{j\ge1}1_{\Gamma_{i,j}}u(x_{i})+\frac{\varepsilon}{3}=u(x_{i})+\frac{\varepsilon}{3}, \quad\mathbb{P}\text{-}a.s.
\]
Consequently, if we put
$v^{\varepsilon}:=\sum_{i\ge1}1_{A_{i}}v^{i,\varepsilon}=\sum_{i\ge1}\sum_{j\ge1}1_{A_{i}\cap\Gamma_{i,j}}v^{i,j}\in \mathcal{V}$,
then we have $\sum_{i=1}^{\infty}1_{A_{i}}Y_{\Theta}^{\Theta,x_{i},v^{i,\varepsilon}}=Y_{\Theta}^{\Theta,\eta,v^{\varepsilon}}$ (see, e.g. \cite{P-97}), and from above inequality combined with (\ref{estimate 1}) it follows
\[
\begin{array}{rl}
u(\xi)&\ge u(\eta)-\frac{\varepsilon}{3}=u(\sum_{i=1}^{\infty}1_{A_{i}}x_{i})-\frac{\varepsilon}{3}
=\sum_{i=1}^{\infty}1_{A_{i}}u(x_{i})-\frac{\varepsilon}{3}\medskip\\
&\ge \sum_{i=1}^{\infty}1_{A_{i}}(Y_{\Theta}^{\Theta,x_{i},v^{i,\varepsilon}}-\frac{\varepsilon}{3})-\frac{\varepsilon}{3}
=\sum_{i=1}^{\infty}1_{A_{i}}Y_{\Theta}^{\Theta,x_{i},v^{i,\varepsilon}}-\frac{2\varepsilon}{3}\medskip\\
&=Y_{\Theta}^{\Theta,\eta,v^{\varepsilon}}-\frac{2\varepsilon}{3}
\ge Y_{\Theta}^{\Theta,\xi,v^{\varepsilon}}-\frac{\varepsilon}{3}-\frac{2\varepsilon}{3}
= Y_{\Theta}^{\Theta,\xi,v^{\varepsilon}}-\varepsilon, \quad \mathbb{P}\text{-}a.s.
\end{array}
\]
Therefore, we have found a $v^{\varepsilon}\in\mathcal{V}$, such that (\ref{inequality 2 for proving DPP}) holds.
\end{proof}
\smallskip

Under the assumption of Theorem \ref{DPP} we have the following both lemmas concerning the sub- and super-dynamic programming principle.
\begin{lemma}\label{lemma 2 for proving DPP}
Let $\Theta$ be a stopping time with $Ee^{\mu \Theta}<\infty$. Then,
$
u(x)\ge\inf\limits_{v\in\mathcal{V}}G_{\tau_{x,v}\wedge\Theta}^{0,x,v}[u(X_{\tau_{x,v}\wedge\Theta}^{0,x,v})].
$
\end{lemma}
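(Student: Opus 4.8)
The plan is to deduce the inequality from the first half of Lemma \ref{lemma 1 for proving DPP}, a restart (flow) identity for the forward--backward system at the control-dependent stopping time $\tau_{x,v}\wedge\Theta$, the monotonicity of the backward semigroup, and finally the semigroup identity (\ref{backward semigroup}). Concretely, I would fix $v\in\mathcal{V}$ and $x\in\overline{D}$, abbreviate $\Theta_{v}:=\tau_{x,v}\wedge\Theta$ and $\xi_{v}:=X^{0,x,v}_{\Theta_{v}}$, and first record that $\Theta_{v}$ and $\xi_{v}$ are admissible data for Lemmas \ref{Lemma for equivalent form of value function} and \ref{lemma 1 for proving DPP}: since $\mu>0$ by Lemma \ref{lemma for assumption} and $\Theta_{v}\le\tau_{x,v}$, assumption $(H_{4}^{\prime})$ gives $E[e^{\mu\Theta_{v}}]\le E[e^{\mu\tau_{x,v}}]<\infty$, while $\xi_{v}$ is $\mathcal{F}_{\Theta_{v}}$-measurable and $\overline{D}$-valued, hence in $L^{2}(\mathcal{F}_{\Theta_{v}};\overline{D})$.

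Next I would establish the restart identity $Y^{\Theta_{v},\xi_{v},v}_{\Theta_{v}}=Y^{0,x,v}_{\Theta_{v}}$. Because the forward SDE is autonomous, for $t\ge\Theta_{v}$ the process $X^{0,x,v}$ solves the same equation as $X^{\Theta_{v},\xi_{v},v}$ with the same datum $\xi_{v}$ at time $\Theta_{v}$, so strong uniqueness forces $X^{\Theta_{v},\xi_{v},v}_{t}=X^{0,x,v}_{t}$ for $t\ge\Theta_{v}$, whence $\tau_{\Theta_{v},\xi_{v},v}=\tau_{x,v}$ (the path has not left $\overline{D}$ before $\tau_{x,v}\ge\Theta_{v}$). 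Substituting into BSDE (\ref{BSDE coupled SDE starting from Theta}), its terminal time, its terminal value $g(X^{0,x,v}_{\tau_{x,v}})$ and its generator coincide with those of (\ref{BSDE coupled with SDE}) on $[\Theta_{v},\tau_{x,v}]$, so uniqueness of the BSDE solution gives $Y^{\Theta_{v},\xi_{v},v}_{t}=Y^{0,x,v}_{t}$ there, in particular at $t=\Theta_{v}$.

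With this in hand I would apply inequality (\ref{inequality 1 for proving DPP}) of Lemma \ref{lemma 1 for proving DPP} at the stopping time $\Theta_{v}$ with initial state $\xi_{v}$ and the very control $v$, obtaining $u(X^{0,x,v}_{\Theta_{v}})=u(\xi_{v})\le Y^{\Theta_{v},\xi_{v},v}_{\Theta_{v}}=Y^{0,x,v}_{\Theta_{v}}$, $\mathbb{P}$-a.s. Since the backward semigroup $G^{0,x,v}_{\tau_{x,v}\wedge\Theta}[\cdot]$ is built from BSDEs sharing the terminal time $\tau_{x,v}\wedge\Theta$ and the generator $f$, the comparison theorem (Lemma \ref{comparison theorem}) yields its monotonicity, so feeding the two ordered terminal values through it gives
\[
G^{0,x,v}_{\tau_{x,v}\wedge\Theta}\bigl[u(X^{0,x,v}_{\tau_{x,v}\wedge\Theta})\bigr]\le G^{0,x,v}_{\tau_{x,v}\wedge\Theta}\bigl[Y^{0,x,v}_{\tau_{x,v}\wedge\Theta}\bigr]=Y^{0,x,v}_{0},
\]
where the last equality is (\ref{backward semigroup}). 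Taking the infimum over $v\in\mathcal{V}$ of both sides and recalling $u(x)=\inf_{v}Y^{0,x,v}_{0}$ then delivers the claim.

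I expect the genuine obstacle to be the legitimacy of invoking Lemma \ref{lemma 1 for proving DPP} at the control-dependent, \emph{random} stopping time $\tau_{x,v}\wedge\Theta$ together with the attendant restart identity: one must confirm the exponential integrability above, check that $\xi_{v}\in\overline{D}$ so that $u(\xi_{v})$ is defined (and equals $g(X^{0,x,v}_{\tau_{x,v}})$ on $\{\Theta\ge\tau_{x,v}\}$ through the boundary behaviour), and---most subtly---justify the flow property when the horizon is the stochastic exit time rather than a deterministic terminal time. This is precisely where the time-shift construction behind Lemma \ref{Lemma for equivalent form of value function} does the heavy lifting, and where the genuinely stochastic exit time makes the argument depart from Peng's original scheme.
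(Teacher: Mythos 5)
Your proposal is correct and follows essentially the same route as the paper: the restart identity $Y^{0,x,v}_{\tau_{x,v}\wedge\Theta}=Y^{\tau_{x,v}\wedge\Theta,\,X^{0,x,v}_{\tau_{x,v}\wedge\Theta},\,v}_{\tau_{x,v}\wedge\Theta}$ via uniqueness, inequality (\ref{inequality 1 for proving DPP}) of Lemma \ref{lemma 1 for proving DPP}, the comparison theorem for the backward semigroup, and the identity (\ref{backward semigroup}), followed by taking the infimum over $v$. Your added checks of the admissibility of the data $(\tau_{x,v}\wedge\Theta,\,X^{0,x,v}_{\tau_{x,v}\wedge\Theta})$ are details the paper leaves implicit but do not change the argument.
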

\begin{proof}
Recalling the definition of our value function and that of the backward semigroup (see (\ref{value function}) and (\ref{backward semigroup})), we obtain
\[
u(x)=\inf\limits_{v\in\mathcal{V}}Y_{0}^{0,x,v}=\inf\limits_{v\in\mathcal{V}}G_{\tau_{x,v}}^{0,x,v}[g(X_{\tau_{x,v}}^{0,x,v})]
=\inf\limits_{v\in\mathcal{V}}G_{\tau_{x,v}\wedge\Theta}^{0,x,v}[Y_{\tau_{x,v}\wedge\Theta}^{0,x,v}].
\]
From the uniqueness of the solution of the SDE and the BSDE with initial data $(\tau_{x,v}\wedge\Theta,X_{\tau_{x,v}\wedge\Theta}^{0,x,v})$ combined with Lemma \ref{lemma 1 for proving DPP} (\ref{inequality 1 for proving DPP}) we get
\[
Y_{\tau_{x,v}\wedge\Theta}^{0,x,v}=Y_{\tau_{x,v}\wedge\Theta}^{\tau_{x,v}\wedge\Theta,X_{\tau_{x,v}\wedge\Theta}^{0,x,v},v}
\ge u(X_{\tau_{x,v}\wedge\Theta}^{0,x,v}),\quad\mathbb{P}\text{-}a.s.
\]
Finally, the comparison theorem for BSDEs (see Lemma \ref{comparison theorem}) yields
\[
u(x)
=\inf\limits_{v\in\mathcal{V}}G_{\tau_{x,v}\wedge\Theta}^{0,x,v}
[Y_{\tau_{x,v}\wedge\Theta}^{\tau_{x,v}\wedge\Theta,X_{\tau_{x,v}\wedge\Theta}^{0,x,v},v}]
\ge
\inf\limits_{v\in\mathcal{V}}G_{\tau_{x,v}\wedge\Theta}^{0,x,v}[u(X_{\tau_{x,v}\wedge\Theta}^{0,x,v})].
\]
\hfill
\end{proof}
\begin{lemma}\label{lemma 3 for proving DPP}
Under the same assumption as in Lemma \ref{lemma 2 for proving DPP} we have
\[
u(x)\leq\inf\limits_{v\in\mathcal{V}}G_{\tau_{x,v}\wedge\Theta}^{0,x,v}[u(X_{\tau_{x,v}\wedge\Theta}^{0,x,v})].
\]
\end{lemma}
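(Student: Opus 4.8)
The plan is to establish the inequality opposite to Lemma \ref{lemma 2 for proving DPP} by a concatenation (``pasting'') argument, combining the $\varepsilon$-optimality provided by Lemma \ref{lemma 1 for proving DPP} \eqref{inequality 2 for proving DPP} with the comparison theorem and the stability estimate for BSDEs with random terminal time. Since $u(x)=\inf_{v\in\mathcal V}Y_0^{0,x,v}$, it suffices to prove that for every fixed $v\in\mathcal V$ and every $\varepsilon>0$,
\[
u(x)\le G_{\tau_{x,v}\wedge\Theta}^{0,x,v}[u(X_{\tau_{x,v}\wedge\Theta}^{0,x,v})]+C\varepsilon,
\]
for a constant $C$ independent of $v$ and $\varepsilon$; the claim then follows by letting $\varepsilon\downarrow0$ and taking the infimum over $v\in\mathcal V$.

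First I would fix $v\in\mathcal V$ and $\varepsilon>0$, and set $\Theta^{*}:=\tau_{x,v}\wedge\Theta$ and $\xi:=X_{\Theta^{*}}^{0,x,v}$. As $\xi$ is $\overline D$-valued it is bounded, hence $\xi\in L^{2}(\mathcal F_{\Theta^{*}};\mathbb R^{d})$, and since $\Theta^{*}\le\Theta$ and $\mu>0$ we have $Ee^{\mu\Theta^{*}}<\infty$. Therefore Lemma \ref{lemma 1 for proving DPP} applies with stopping time $\Theta^{*}$ and delivers $v^{\varepsilon}\in\mathcal V$ such that
\[
Y_{\Theta^{*}}^{\Theta^{*},\xi,v^{\varepsilon}}\le u(\xi)+\varepsilon=u(X_{\tau_{x,v}\wedge\Theta}^{0,x,v})+\varepsilon,\quad\mathbb P\text{-}a.s.
\]

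The core of the argument is the concatenated control $\hat v_s:=v_s1_{\{s\le\Theta^{*}\}}+v_s^{\varepsilon}1_{\{s>\Theta^{*}\}}$, which is again progressively measurable and $V$-valued, hence $\hat v\in\mathcal V$. By uniqueness of the solution of SDE \eqref{SDE}, $X^{0,x,\hat v}=X^{0,x,v}$ on $[0,\Theta^{*}]$ and $X_t^{0,x,\hat v}=X_t^{\Theta^{*},\xi,v^{\varepsilon}}$ for $t\ge\Theta^{*}$, so that the exit times satisfy $\tau_{x,\hat v}=\tau_{\Theta^{*},\xi,v^{\varepsilon}}\ge\Theta^{*}$; on $\{\tau_{x,v}\le\Theta\}$ one has $\xi\in\partial D=\Gamma$ (see Lemma \ref{lemma for assumption}), so there both sides equal $\Theta^{*}$. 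Consequently $\tau_{x,\hat v}\wedge\Theta^{*}=\Theta^{*}$, and from the flow property \eqref{backward semigroup} of the backward semigroup together with $\hat v=v$ on $[0,\Theta^{*}]$ I obtain
\[
Y_0^{0,x,\hat v}=G_{\tau_{x,\hat v}\wedge\Theta^{*}}^{0,x,\hat v}[Y_{\Theta^{*}}^{0,x,\hat v}]=G_{\tau_{x,v}\wedge\Theta}^{0,x,v}[Y_{\Theta^{*}}^{0,x,\hat v}].
\]
A uniqueness argument for the BSDE on $[\Theta^{*},\tau_{x,\hat v}]$, analogous to Steps 1--3 of the proof of Lemma \ref{Lemma for equivalent form of value function}, then identifies $Y_{\Theta^{*}}^{0,x,\hat v}=Y_{\Theta^{*}}^{\Theta^{*},\xi,v^{\varepsilon}}$, which by the displayed estimate above is dominated by $u(X_{\tau_{x,v}\wedge\Theta}^{0,x,v})+\varepsilon$.

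Finally I would invoke the comparison theorem (Lemma \ref{comparison theorem}) and the stability estimate (Lemma \ref{Stability w.r.t. perturbation}) for $G_{\tau_{x,v}\wedge\Theta}^{0,x,v}$. Comparison gives $G_{\tau_{x,v}\wedge\Theta}^{0,x,v}[Y_{\Theta^{*}}^{0,x,\hat v}]\le G_{\tau_{x,v}\wedge\Theta}^{0,x,v}[u(X_{\tau_{x,v}\wedge\Theta}^{0,x,v})+\varepsilon]$, while Lemma \ref{Stability w.r.t. perturbation}, applied with the same terminal time and generator and terminal data differing by the constant $\varepsilon$, yields $G_{\tau_{x,v}\wedge\Theta}^{0,x,v}[\eta+\varepsilon]\le G_{\tau_{x,v}\wedge\Theta}^{0,x,v}[\eta]+C\varepsilon$ with $C=(E[e^{\theta(\tau_{x,v}\wedge\Theta)}])^{1/2}\le 1$ independent of $v$ and $\varepsilon$. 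Chaining these with $u(x)\le Y_0^{0,x,\hat v}$ produces the displayed bound, and letting $\varepsilon\downarrow0$ and taking $\inf_{v}$ finishes the proof. The hard part will be the rigorous justification of the two pasting identities $\tau_{x,\hat v}=\tau_{\Theta^{*},\xi,v^{\varepsilon}}$ and $Y_{\Theta^{*}}^{0,x,\hat v}=Y_{\Theta^{*}}^{\Theta^{*},\xi,v^{\varepsilon}}$ at the random switching time $\Theta^{*}$, most delicately on the event $\{\tau_{x,v}\le\Theta\}$ where the restarted diffusion exits immediately; this is precisely where the identity $\Gamma=\partial D$ from Lemma \ref{lemma for assumption} enters.
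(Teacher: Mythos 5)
Your proposal is correct and follows essentially the same route as the paper: both rest on the $\varepsilon$-optimal control supplied by Lemma \ref{lemma 1 for proving DPP} (\ref{inequality 2 for proving DPP}), the comparison theorem (Lemma \ref{comparison theorem}) and the stability estimate (Lemma \ref{Stability w.r.t. perturbation}), the paper merely running the chain of inequalities in the opposite direction, from $\inf_{v}G_{\tau_{x,v}\wedge\Theta}^{0,x,v}[u(X_{\tau_{x,v}\wedge\Theta}^{0,x,v})]$ down to $u(x)-C\varepsilon$. Your explicit pasting of $v$ with $v^{\varepsilon}$ at $\tau_{x,v}\wedge\Theta$ is precisely the step the paper leaves implicit in its final inequality $\inf_{v}G_{\tau_{x,v}\wedge\Theta}^{0,x,v}\bigl[Y_{\tau_{x,v}\wedge\Theta}^{\tau_{x,v}\wedge\Theta,X_{\tau_{x,v}\wedge\Theta}^{0,x,v},v^{\varepsilon}}\bigr]\ge u(x)$, so spelling it out (including the boundary case via $\Gamma=\partial D$) is a welcome clarification rather than a deviation.
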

\begin{proof}
From Lemma \ref{lemma 1 for proving DPP} (\ref{inequality 2 for proving DPP}), we know that, for arbitrary $\varepsilon>0$, there exists $v^{\varepsilon}\in\mathcal{V}$ such that
\[
u(X_{\tau_{x,v}\wedge\Theta}^{0,x,v})\ge Y_{\tau_{x,v}\wedge\Theta}^{\tau_{x,v}\wedge\Theta,X_{\tau_{x,v}\wedge\Theta}^{0,x,v},v^{\varepsilon}}-\varepsilon.
\]
Then from the comparison theorem for BSDEs it follows that
\[
\inf\limits_{v\in\mathcal{V}}G_{\tau_{x,v}\wedge\Theta}^{0,x,v}[u(X_{\tau_{x,v}\wedge\Theta}^{0,x,v})]
\ge \inf\limits_{v\in\mathcal{V}}G_{\tau_{x,v}\wedge\Theta}^{0,x,v}
[Y_{\tau_{x,v}\wedge\Theta}^{\tau_{x,v}\wedge\Theta,X_{\tau_{x,v}\wedge\Theta}^{0,x,v},v^{\varepsilon}}-\varepsilon].
\]
With the help of Lemma \ref{Stability w.r.t. perturbation} and the definition of backward semigroup we deduce that there exists a constant $C$ independent of $\varepsilon$ s.t.
\[
\inf\limits_{v\in\mathcal{V}}G_{\tau_{x,v}\wedge\Theta}^{0,x,v}
[Y_{\tau_{x,v}\wedge\Theta}^{\tau_{x,v}\wedge\Theta,X_{\tau_{x,v}\wedge\Theta}^{0,x,v},v^{\varepsilon}}-\varepsilon]
\ge \inf\limits_{v\in\mathcal{V}}G_{\tau_{x,v}\wedge\Theta}^{0,x,v}
[Y_{\tau_{x,v}\wedge\Theta}^{\tau_{x,v}\wedge\Theta,X_{\tau_{x,v}\wedge\Theta}^{0,x,v},v^{\varepsilon}}]-C\varepsilon.
\]
One the other hand, as already indicated in the proof of Lemma \ref{lemma 2 for proving DPP},
\[
u(x)
=\inf\limits_{v\in\mathcal{V}}G_{\tau_{x,v}\wedge\Theta}^{0,x,v}[Y_{\tau_{x,v}\wedge\Theta}^{0,x,v}]
=\inf\limits_{v\in\mathcal{V}}G_{\tau_{x,v}\wedge\Theta}^{0,x,v}
[Y_{\tau_{x,v}\wedge\Theta}^{\tau_{x,v}\wedge\Theta,X_{\tau_{x,v}\wedge\Theta}^{0,x,v},v}],
\]
so we have that, by combining the above estimates,
\[
\inf\limits_{v\in\mathcal{V}}G_{\tau_{x,v}\wedge\Theta}^{0,x,v}[u(X_{\tau_{x,v}\wedge\Theta}^{0,x,v})]
\ge \inf\limits_{v\in\mathcal{V}}G_{\tau_{x,v}\wedge\Theta}^{0,x,v}
[Y_{\tau_{x,v}\wedge\Theta}^{\tau_{x,v}\wedge\Theta,X_{\tau_{x,v}\wedge\Theta}^{0,x,v},v^{\varepsilon}}]-C\varepsilon
\ge u(x)-C\varepsilon.
\]
Finally, since $\varepsilon$ is arbitrary, the proof is completed.
\end{proof}
\medskip

Remark that the Lemmas \ref{lemma 2 for proving DPP} and \ref{lemma 3 for proving DPP} just prove Theorem \ref{DPP}.

\section{Generalized HJB equation with Dirichlet boundary}
In this section we consider the following generalized Hamilton-Jacobi-Bellman equation with Dirichlet boundary:
\begin{equation}\label{HJB equation}
\left\{
\begin{array}
[c]{l}
\inf\limits_{v\in V}\left\{\mathcal{L}(x,v)u(x)+f(x,u(x),\nabla u(x) \sigma(x,v),v)\right\}=0, \quad x\in D,\medskip\\
u(x)=g(x),\quad x\in \partial D,
\end{array}
\right.
\end{equation}
where $D$ is the bounded domain in $\mathbb{R}^{d}$, and $V$ is the compact metric space in $\mathbb{R}^{k}$, introduced in Section 2. For $u\in C^{2}(D)$ and $(x,v)\in D\times V$,  we have put
\[\mathcal{L}(x,v)u(x):=\frac{1}{2}\sum_{i,j=1}^{d}(\sigma\sigma^{\ast})_{i,j}(x,v)\frac{\partial^{2}u}{\partial x_{i}\partial x_{j}}(x)
+\sum_{i=1}^{d}b_{i}(x,v)\frac{\partial u}{\partial x_{i}}(x),
\]
and we suppose that the coefficients $b,\sigma$ and $f$ satisfy the assumptions $(H_{1})$-$(H_{5})$ and that $g\in C(\overline{D})$.

First, let us recall the definition of a viscosity solution of (\ref{HJB equation}); see Crandall, Ishii and Lions \cite{CIL-92} for more details.
\begin{definition}\label{Definition of viscosity solution}
(i) A continuous function $u:\overline{D}\rightarrow\mathbb{R}$ is called a viscosity subsolution of (\ref{HJB equation}),  if $u(x)\leq g(x)$, for all $x\in\partial D$, and if, for any $\varphi\in C^{2}(\overline{D})$ and any local maximum point $x$ of $u-\varphi$, it holds that
\[
\begin{array}{ll}
\inf\limits_{v\in V}\left\{\mathcal{L}(x,v)\varphi(x)+f(x,u(x),\nabla \varphi(x) \sigma(x,v),v)\right\}\ge 0, \quad
  x\in \overline{D}\setminus\partial D.
\end{array}
\]
(ii) The function $u$ is called a viscosity supersolution of (\ref{HJB equation}),  if $u(x)\ge g(x)$, for all $x\in\partial D$, and if, for any $\varphi\in C^{2}(\overline{D})$ and any local minimum point $x$ of $u-\varphi$, we have
\[
\begin{array}{ll}
\inf\limits_{v\in V}\left\{\mathcal{L}(x,v)\varphi(x)+f(x,u(x),\nabla \varphi(x) \sigma(x,v),v)\right\}\leq 0, \quad
 x\in \overline{D}\setminus\partial D.
\end{array}
\]
(iii) The function $u$ is said to be a viscosity solution of (\ref{HJB equation}), if it is both a viscosity subsolution and a viscosity supersolution of (\ref{HJB equation}).
\end{definition}
\begin{remark}
Standard arguments show that it is sufficient to consider test functions in Definition \ref{Definition of viscosity solution} which belong to $C^{3}(\overline{D})$, see for instance \cite{L-1983} Remark I.9 or \cite{G-2006} Proposition 2.2.3.
\end{remark}
In this section we assume that
\begin{itemize}
\item[$\left(H_{6}\right)$] $f(x,y,z,v)$ is Lipschitz continuous  w.r.t. $y$, uniformly on $(x,z,v)$, i.e.  there exists a constants $\tilde{L}\ge0$, such that, for all $(x,z,v)\in \overline{D}\times\mathbb{R}^{m}\times V$, $y_{1},y_{2}\in\mathbb{R}$,
    \[
      |f(x,y_{1},z,v)-f(x,y_{2},z,v)|\leq \tilde{L}|y_{1}-y_{2}|.
    \]
\end{itemize}
We would like to show that the value function $u(x)$ (see (\ref{value function})) of our stochastic exit time optimal control problem introduced in Section 2 is the viscosity solution of (\ref{HJB equation}). Motivated by the BSDE approach of Peng \cite{P-97}, we first give several auxiliary lemmas. First, for arbitrary but fixed $\varphi\in C^{3}(\overline{D})$, we set
\[
F(x,y,z,v):=\mathcal{L}(x,v)\varphi(x)+f(x,y+\varphi(x),z+\nabla\varphi(x)\sigma(x,v),v),
\]
$(x,y,z,v)\in\mathbb{R}^{d}\times\mathbb{R}\times\mathbb{R}^{m}\times V$. Recalling that $X^{0,x,v}$ is the solution of SDE (\ref{SDE}) and the stochastic exit time $\tau_{x,v}$ is defined in (\ref{exit time}), we consider the following BSDE with random terminal time $\tau_{x,v}\wedge \varepsilon$,  for an arbitrary but fixed $0<\varepsilon\leq 1$:
\begin{equation}\label{BSDE Y1}
\left\{
\begin{array}
[c]{l}
-dY_{s}^{1;0,x,v;\varepsilon}=F(X_{s}^{0,x,v},Y_{s}^{1;0,x,v;\varepsilon}, Z_{s}^{1;0,x,v;\varepsilon},v_{s})ds-Z_{s}^{1;0,x,v;\varepsilon}dB_{s},
\quad 0\leq s\leq\tau_{x,v}\wedge\varepsilon,\medskip\\
Y_{\tau_{x,v}\wedge \varepsilon}^{1;0,x,v;\varepsilon}=0.
\end{array}
\right.
\end{equation}

\begin{lemma}\label{lemma 1 for proving viscosity solution}
Under the assumptions $(H_{1})$-$(H_{6})$, BSDE (\ref{BSDE Y1}) has a unique solution $(Y^{1;0,x,v;\varepsilon},$
$Z^{1;0,x,v;\varepsilon})\in M_{\gamma}^{2}(0,\tau_{x,v}\wedge\varepsilon;\mathbb{R})\times M_{\gamma}^{2}(0,\tau_{x,v}\wedge\varepsilon;\mathbb{R}^{m})$.
The solution also belongs to
$M_{\mu}^{2}(0,\tau_{x,v}\wedge\varepsilon;\mathbb{R})\times M_{\mu}^{2}(0,\tau_{x,v}\wedge\varepsilon;\mathbb{R}^{m})$ and satisfies
$E[\sup\limits_{0\leq s\leq \tau_{x,v}\wedge\varepsilon}e^{\mu s}|Y^{1;0,x,v;\varepsilon}_{s}|^{2}]<\infty$. Moreover, we have
\begin{equation}\label{estimate 2}
Y_{s\wedge\tau_{x,v}\wedge\varepsilon}^{1;0,x,v;\varepsilon}=G_{s,\tau_{x,v}\wedge\varepsilon}^{0,x,v}
[\varphi(X_{\tau_{x,v}\wedge\varepsilon}^{0,x,v})]
-\varphi(X_{s\wedge\tau_{x,v}\wedge\varepsilon}^{0,x,v}),\quad s\ge0,~\mathbb{P}\text{-}a.s.
\end{equation}
\end{lemma}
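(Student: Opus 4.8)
The plan is to prove the two assertions separately: first the well-posedness and integrability via Lemma \ref{Wellposedness Darling and Pardoux}, and then the representation (\ref{estimate 2}) by an It\^{o} change of variables followed by a uniqueness argument.

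\textbf{Well-posedness.} I would view (\ref{BSDE Y1}) as an instance of (\ref{general BSDE with random terminal time}) with bounded terminal time $\tau_{x,v}\wedge\varepsilon$, terminal value $\xi=0$, and generator $h(s,y,z):=F(X_s^{0,x,v},y,z,v_s)$, and then verify that $h$ meets the structural conditions $(A_1)$. The point is that the additive shifts by $\varphi(x)$ and $\nabla\varphi(x)\sigma(x,v)$ built into $F(x,y,z,v)=\mathcal{L}(x,v)\varphi(x)+f(x,y+\varphi(x),z+\nabla\varphi(x)\sigma(x,v),v)$ cancel in every relevant difference: the Lipschitz property in $z$ (item $(ii)$) is inherited directly from $(H_3)(ii)$ with the same $\beta$; the monotonicity (item $(iii)$) follows from $(H_3)(iii)$ since $(y_1+\varphi)-(y_2+\varphi)=y_1-y_2$, so the same $\alpha$ works; and the growth bound (item $(i)$) follows from $(H_6)$, which gives $|F(x,y,z,v)-F(x,0,z,v)|\leq\tilde{L}|y|$. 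Since $\tau_{x,v}\wedge\varepsilon\leq\varepsilon\leq 1$ and, by Remark \ref{remark for boundedness of the coefficient} together with $\varphi\in C^{3}(\overline{D})$, the quantity $F(X_s^{0,x,v},0,0,v_s)$ is bounded for $s\leq\tau_{x,v}$ (where $X_s^{0,x,v}\in\overline{D}$), the integrability hypothesis of Lemma \ref{Wellposedness Darling and Pardoux} holds trivially. Hence that lemma delivers existence, uniqueness, and the claimed membership in $M_\mu^{2}$ together with the supremum estimate.

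\textbf{The identity.} To obtain (\ref{estimate 2}) I would introduce, on $[0,\tau_{x,v}\wedge\varepsilon]$,
\[
\bar{Y}_s:=Y_s^{1;0,x,v;\varepsilon}+\varphi(X_s^{0,x,v}),\qquad
\bar{Z}_s:=Z_s^{1;0,x,v;\varepsilon}+\nabla\varphi(X_s^{0,x,v})\sigma(X_s^{0,x,v},v_s).
\]
Applying It\^{o}'s formula to $\varphi(X_s^{0,x,v})$ (legitimate since $\varphi\in C^{3}(\overline{D})$ and $X_s^{0,x,v}\in\overline{D}$ on this interval) gives
\[
d\varphi(X_s^{0,x,v})=\mathcal{L}(X_s^{0,x,v},v_s)\varphi(X_s^{0,x,v})\,ds+\nabla\varphi(X_s^{0,x,v})\sigma(X_s^{0,x,v},v_s)\,dB_s.
\]
Adding this to the dynamics of $Y^{1;0,x,v;\varepsilon}$ coming from (\ref{BSDE Y1}) and using the definition of $F$, the term $\mathcal{L}(X_s^{0,x,v},v_s)\varphi(X_s^{0,x,v})$ cancels, so that
\[
-d\bar{Y}_s=f(X_s^{0,x,v},\bar{Y}_s,\bar{Z}_s,v_s)\,ds-\bar{Z}_s\,dB_s,\qquad 0\leq s\leq\tau_{x,v}\wedge\varepsilon,
\]
with terminal value $\bar{Y}_{\tau_{x,v}\wedge\varepsilon}=\varphi(X_{\tau_{x,v}\wedge\varepsilon}^{0,x,v})$, because $Y_{\tau_{x,v}\wedge\varepsilon}^{1;0,x,v;\varepsilon}=0$.

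\textbf{Conclusion by uniqueness.} This is exactly the BSDE defining the backward semigroup $G_{\cdot,\tau_{x,v}\wedge\varepsilon}^{0,x,v}[\varphi(X_{\tau_{x,v}\wedge\varepsilon}^{0,x,v})]$. Since $\varphi$ and $\nabla\varphi\,\sigma$ are bounded on $\overline{D}\times V$ and $Y^{1;0,x,v;\varepsilon}\in M_\mu^{2}$, while $E e^{\mu\tau_{x,v}}<\infty$ by $(H_4^{\prime})$, the pair $(\bar{Y},\bar{Z})$ lies in the same space $M_\gamma^{2}(0,\tau_{x,v}\wedge\varepsilon;\mathbb{R})\times M_\gamma^{2}(0,\tau_{x,v}\wedge\varepsilon;\mathbb{R}^m)$ in which the backward semigroup is uniquely characterised. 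The uniqueness part of Lemma \ref{Wellposedness Darling and Pardoux} then forces $\bar{Y}_{s\wedge\tau_{x,v}\wedge\varepsilon}=G_{s,\tau_{x,v}\wedge\varepsilon}^{0,x,v}[\varphi(X_{\tau_{x,v}\wedge\varepsilon}^{0,x,v})]$, which rearranges into (\ref{estimate 2}). I expect the only genuinely delicate point to be this final space-membership check that legitimises the invocation of uniqueness; the structural transfer of the assumptions and the It\^{o} computation are routine.
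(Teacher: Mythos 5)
Your proposal is correct and follows essentially the same route as the paper: the well-posedness is obtained by checking that the shifted generator $F(X_s^{0,x,v},\cdot,\cdot,v_s)$ inherits conditions $(A_1)$ from $(H_3)$ and $(H_6)$ (the additive shifts cancelling in differences, exactly as you note), and the identity (\ref{estimate 2}) is obtained by applying It\^{o}'s formula to $\varphi(X_s^{0,x,v})$ and invoking uniqueness. The only cosmetic difference is directional — the paper shows $Y^{\varphi;0,x,v;\varepsilon}-\varphi(X^{0,x,v})$ solves (\ref{BSDE Y1}) while you show $Y^{1;0,x,v;\varepsilon}+\varphi(X^{0,x,v})$ solves the semigroup BSDE — which is the same argument; your explicit check that $(\bar{Y},\bar{Z})$ lies in the uniqueness class is a detail the paper leaves implicit.
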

\begin{proof}
It is direct to verify that $F(X_{s}^{0,x,v},y,z,v)$ and $\tau_{x,v}\wedge\varepsilon$ satisfy the conditions of Lemma \ref{Wellposedness Darling and Pardoux}. So we know that BSDE (\ref{BSDE Y1}) has a unique solution $(Y^{1;0,x,v;\varepsilon},Z^{1;0,x,v;\varepsilon})\in M_{\gamma}^{2}(0,\tau_{x,v}\wedge\varepsilon;\mathbb{R})\times M_{\gamma}^{2}(0,\tau_{x,v}\wedge\varepsilon;\mathbb{R}^{m})$. Moreover, the solution belongs to
$M_{\mu}^{2}(0,\tau_{x,v}\wedge\varepsilon;\mathbb{R})\times M_{\mu}^{2}(0,\tau_{x,v}\wedge\varepsilon;\mathbb{R}^{m})$ and satisfies
$E[\sup\limits_{0\leq s\leq \tau_{x,v}\wedge\varepsilon}e^{\mu s}|Y^{1;0,x,v;\varepsilon}_{s}|^{2}]<\infty$.

It remains to show (\ref{estimate 2}). We recall that
$G_{s,\tau_{x,v}\wedge\varepsilon}^{0,x,v}[\varphi(X_{\tau_{x,v}\wedge\varepsilon}^{0,x,v})]
:=Y_{s\wedge\tau_{x,v}}^{\varphi;0,x,v;\varepsilon}$,
where $(Y^{\varphi;0,x,v;\varepsilon}$,
$Z^{\varphi;0,x,v;\varepsilon})$ is the solution of the following BSDE
\[
\left\{
\begin{array}
[c]{l}
-dY_{s}^{\varphi;0,x,v;\varepsilon}=f(X_{s}^{0,x,v},Y_{s}^{\varphi;0,x,v;\varepsilon}, Z_{s}^{\varphi;0,x,v;\varepsilon},v_{s})ds-Z_{s}^{\varphi;0,x,v;\varepsilon}dB_{s}, \quad 0\leq s\leq\tau_{x,v}\wedge\varepsilon, \medskip\\
Y_{\tau_{x,v}\wedge\varepsilon}^{\varphi;0,x,v;\varepsilon}=\varphi(X_{\tau_{x,v}\wedge\varepsilon}^{0,x,v}).
\end{array}
\right.
\]
Therefore, we only need to show that
$Y_{s\wedge\tau_{x,v}\wedge\varepsilon}^{\varphi;0,x,v;\varepsilon}-\varphi(X_{s\wedge\tau_{x,v}\wedge\varepsilon}^{0,x,v})
=Y_{s\wedge\tau_{x,v}\wedge\varepsilon}^{1;0,x,v;\varepsilon;\varepsilon}$.
But this relation holds true, it can be verified easily by applying It\^{o}'s formula to $\varphi(X_{s}^{0,x,v})$ and by considering that at terminal time $\tau_{x,v}\wedge\varepsilon$,
$Y_{\tau_{x,v}\wedge\varepsilon}^{\varphi;0,x,v;\varepsilon}-\varphi(X_{\tau_{x,v}\wedge\varepsilon}^{0,x,v})
=0=Y_{\tau_{x,v}\wedge\varepsilon}^{1;0,x,v;\varepsilon}$.
\end{proof}

\begin{lemma}\label{lemma 2 for proving viscosity solution}
For the solution $(Y^{2;0,x,v;\varepsilon},Z^{2;0,x,v;\varepsilon})$ of the following simple BSDE
\begin{equation}\label{BSDE Y2}
\left\{
\begin{array}
[c]{l}
-dY_{s}^{2;0,x,v;\varepsilon}=F(x,Y_{s}^{2;0,x,v;\varepsilon}, Z_{s}^{2;0,x,v;\varepsilon},v_{s})ds-Z_{s}^{2;0,x,v;\varepsilon}dB_{s},
\quad 0\leq s\leq\tau_{x,v}\wedge\varepsilon, \medskip\\
Y_{\tau_{x,v}\wedge\varepsilon}^{2;0,x,v;\varepsilon}=0,
\end{array}
\right.
\end{equation}
there exists a constant $C$ independent of $v$, $\varepsilon$ and $x\in\overline{D}$, such that
\begin{equation}\label{estimate between Y1 and Y2}
|Y_{0}^{1;0,x,v;\varepsilon}-Y_{0}^{2;0,x,v;\varepsilon}|\leq C \varepsilon^{\frac{3}{2}},
\end{equation}
and
\begin{equation}\label{estimate for Y2}
E\left[\int_{0}^{\tau_{x,v}\wedge\varepsilon}\left(|Y_{s}^{2;0,x,v;\varepsilon}|+|Z_{s}^{2;0,x,v;\varepsilon}|\right)ds\right]
\leq C\varepsilon^{\frac{3}{2}}.
\end{equation}
\end{lemma}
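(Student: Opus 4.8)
The plan is to read both bounds as short-horizon BSDE estimates on the random interval $[0,\tau_{x,v}\wedge\varepsilon]$, whose length is at most $\varepsilon\le 1$, and to extract the sharp power $\varepsilon^{3/2}$ by keeping the relevant discrepancies in $L^{1}(ds)$ and spending one Cauchy--Schwarz on $[0,\varepsilon]$. Two uniform facts come first. Since $\varphi\in C^{3}(\overline D)$, the maps $\nabla\varphi,\nabla^{2}\varphi$ are bounded and Lipschitz on $\overline D$; together with Remark \ref{remark for boundedness of the coefficient} and $(H_{3})(ii)$, $(H_{6})$ this shows that $F(x,0,0,v)$ is bounded by some $M$ uniformly in $(x,v)\in\overline D\times V$ and, crucially, that $x'\mapsto F(x',y,z,v)$ is Lipschitz uniformly in $(y,z,v)$. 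Indeed, inserting an intermediate point, the increments of the $f$-part are controlled by $\beta|x'-x''|$ (first slot, by $(H_{3})(ii)$), by $\tilde L\|\nabla\varphi\|_{\infty}|x'-x''|$ (the $y$-slot, by $(H_{6})$) and by $\beta\,\mathrm{Lip}(\nabla\varphi(\cdot)\sigma(\cdot,v))|x'-x''|$ (the $z$-slot, by $(H_{3})(ii)$), while $\mathcal L(\cdot,v)\varphi(\cdot)$ is Lipschitz because the coefficients $(\sigma\sigma^{\ast})_{ij}(\cdot,v),b_{i}(\cdot,v)$ are Lipschitz by $(H_{1})$ and $\nabla\varphi,\nabla^{2}\varphi$ are Lipschitz. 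Hence
\[
|F(X_{s}^{0,x,v},y,z,v)-F(x,y,z,v)|\le C|X_{s}^{0,x,v}-x|
\]
with $C$ independent of $(y,z,v,\varepsilon,x)$. Second, as $X^{0,x,v}_{s\wedge\tau_{x,v}}\in\overline D$ keeps $b,\sigma$ bounded along the stopped path, the It\^{o} isometry gives $E[|X^{0,x,v}_{s\wedge\tau_{x,v}}-x|^{2}]\le Cs$ for $s\le1$, whence $E\big[\int_{0}^{\tau_{x,v}\wedge\varepsilon}|X^{0,x,v}_{s}-x|^{2}ds\big]\le C\varepsilon^{2}$.

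For (\ref{estimate for Y2}) I would apply the standard a priori estimate to (\ref{BSDE Y2}): terminal value $0$, horizon $\tau_{x,v}\wedge\varepsilon\le\varepsilon$, driver Lipschitz in $(y,z)$ with $F(x,0,0,v)$ bounded by $M$. This gives $E[\sup_{s\le\tau_{x,v}\wedge\varepsilon}|Y^{2}_{s}|^{2}+\int_{0}^{\tau_{x,v}\wedge\varepsilon}|Z^{2}_{s}|^{2}ds]\le C(M\varepsilon)^{2}$, where $(Y^{2},Z^{2})$ denotes the solution of (\ref{BSDE Y2}). The $Y$-part of (\ref{estimate for Y2}) is then $\le\varepsilon\,(E[\sup|Y^{2}|^{2}])^{1/2}\le C\varepsilon^{2}$, and the $Z$-part is estimated by Cauchy--Schwarz in time, $E\int_{0}^{\tau_{x,v}\wedge\varepsilon}|Z^{2}|ds\le\varepsilon^{1/2}(E\int_{0}^{\tau_{x,v}\wedge\varepsilon}|Z^{2}|^{2}ds)^{1/2}\le C\varepsilon^{3/2}$; as $\varepsilon\le1$, the two combine into the claimed bound.

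For (\ref{estimate between Y1 and Y2}) I would compare (\ref{BSDE Y1}) and (\ref{BSDE Y2}), which share the horizon $\tau_{x,v}\wedge\varepsilon$ and the terminal value $0$ and differ only through the first argument of $F$. Linearising the Lipschitz dependence of $F$ on $(y,z)$, the difference $\Delta Y=Y^{1}-Y^{2}$ (with $(Y^{1},Z^{1})$ the solution of (\ref{BSDE Y1})) solves a linear BSDE with bounded coefficients whose source is the freezing error $F(X^{0,x,v}_{s},Y^{1}_{s},Z^{1}_{s},v_{s})-F(x,Y^{1}_{s},Z^{1}_{s},v_{s})$. Representing $\Delta Y_{0}$ through this linear equation --- equivalently, using the a priori estimate in which the $L^{1}(ds)$-norm of the source sits inside the square --- and then invoking the first paragraph gives
\[
|\Delta Y_{0}|\le C\,E\Big[\int_{0}^{\tau_{x,v}\wedge\varepsilon}|X^{0,x,v}_{s}-x|\,ds\Big]\le C\Big(\varepsilon\,E\!\int_{0}^{\tau_{x,v}\wedge\varepsilon}|X^{0,x,v}_{s}-x|^{2}ds\Big)^{1/2}\le C\varepsilon^{3/2},
\]
which is exactly (\ref{estimate between Y1 and Y2}).

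The main obstacle is obtaining the exponent $3/2$ rather than $1$. A direct appeal to the weighted $L^{2}$-stability of Lemma \ref{Stability w.r.t. perturbation} only bounds $|\Delta Y_{0}|^{2}$ by $E\int_{0}^{\tau_{x,v}\wedge\varepsilon}|X^{0,x,v}_{s}-x|^{2}ds\le C\varepsilon^{2}$, i.e. $|\Delta Y_{0}|\le C\varepsilon$, which is useless once one divides by $\varepsilon$ in the viscosity-solution argument of the sequel. The missing factor $\varepsilon^{1/2}$ is recovered only by keeping the freezing error in $L^{1}$ in time and spending one Cauchy--Schwarz on $[0,\varepsilon]$, which is why the uniform-in-$(y,z,v)$ Lipschitz bound on the first slot of $F$ --- blending the genuinely Lipschitz $\mathcal L(\cdot,v)\varphi(\cdot)$ with the only partially Lipschitz $f$ --- is the delicate point to verify.
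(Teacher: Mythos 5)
Your proof is correct, and it follows the same underlying strategy as the paper --- freeze the spatial argument of $F$, control the freezing error through the modulus of continuity of the stopped diffusion, and recover the exponent $3/2$ by keeping that error in $L^{1}(ds)$ and spending one Cauchy--Schwarz over a time interval of length at most $\varepsilon$ --- but the technical workhorse is different. For (\ref{estimate between Y1 and Y2}) the paper applies the Darling--Pardoux stability result (Lemma \ref{Stability w.r.t. perturbation}), which, exactly as you observe, only yields $|Y_{0}^{1;0,x,v;\varepsilon}-Y_{0}^{2;0,x,v;\varepsilon}|\leq C\varepsilon$ on its own; it then recovers the missing $\varepsilon^{1/2}$ by writing $Y_{0}^{1;0,x,v;\varepsilon}-Y_{0}^{2;0,x,v;\varepsilon}=E\big[\int_{0}^{\tau_{x,v}\wedge\varepsilon}\big(F(X_{s}^{0,x,v},\ldots)-F(x,\ldots)\big)ds\big]$ (the martingale part vanishes since both initial values are deterministic), bounding the freezing error in $L^{1}(ds)$ directly and the residual $(\Delta Y,\Delta Z)$-terms by Cauchy--Schwarz against the $L^{2}$ stability bound. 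You instead invoke the a priori estimate with the $L^{1}(ds)$-norm of the source squared inside the expectation; that estimate is not stated in the paper but is standard for Lipschitz drivers on a horizon bounded by $1$ (one reduces the random horizon to the deterministic interval $[0,\varepsilon]$ by inserting the indicator $1_{\{s\leq\tau_{x,v}\wedge\varepsilon\}}$ into the driver, exactly as the paper does for (\ref{BSDE Y3 2})), and it delivers $\varepsilon^{3/2}$ in one step. For (\ref{estimate for Y2}) the divergence is more pronounced: the paper needs a two-pass bootstrap (It\^{o} plus Gronwall first give $E\int|Z^{2;0,x,v;\varepsilon}|^{2}ds\leq C\varepsilon$ and $|Y_{0}^{2;0,x,v;\varepsilon}|\leq C\varepsilon$, and only a second pass through the martingale representation upgrades this to $E\int|Z^{2;0,x,v;\varepsilon}|^{2}ds\leq C\varepsilon^{2}$, without which Cauchy--Schwarz would give only $\varepsilon$), whereas your single appeal to $E[\sup|Y^{2;0,x,v;\varepsilon}|^{2}+\int|Z^{2;0,x,v;\varepsilon}|^{2}ds]\leq C(M\varepsilon)^{2}$ absorbs that bootstrap into the quoted estimate. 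Your verification that $x'\mapsto F(x',y,z,v)$ is Lipschitz on $\overline{D}$ uniformly in $(y,z,v)$ matches the paper's, except that the paper keeps a harmless additional term $|X_{s}^{0,x,v}-x|^{2}$, which on the bounded set $\overline{D}$ is dominated by the linear one anyway. In short: correct, same strategy, but you outsource to a sharper standard BSDE estimate what the paper derives by hand from the weaker Lemma \ref{Stability w.r.t. perturbation}.
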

\begin{proof}
Let us first show (\ref{estimate between Y1 and Y2}). As $b$ and $\sigma$ are bounded over $\overline{D}\times V$, we have for all $\varepsilon>0$, $v\in\mathcal{V}$, $x\in\overline{D}$ and $p\ge2$,
\begin{equation}\label{estimate for SDE}
\begin{array}{rl}
E\left[\sup\limits_{t\in[0,\varepsilon]}|X_{t}^{0,x,v}-x|^{p}\right]\leq &2^{p-1}E\left[\sup\limits_{t\in[0,\varepsilon]}\left|\displaystyle\int_{0}^{t}b(X_{s}^{0,x,v},v_{s})ds\right|^{p}\right] \medskip\\
& +2^{p-1}E\left[\sup\limits_{t\in[0,\varepsilon]}\left|\displaystyle\int_{0}^{t}\sigma(X_{s}^{0,x,v},v_{s})dB_{s}\right|^{p}\right] \medskip\\
\leq & C_{p}\varepsilon^{p/2}.
\end{array}
\end{equation}
Now, we apply Lemma \ref{Stability w.r.t. perturbation} to the BSDEs (\ref{BSDE Y1}) and (\ref{BSDE Y2}). Then for all $\theta\in(\beta^{2}-2\alpha,\mu]$ and for some constant $C$ independent of $v$ and $\varepsilon$,
\[
\begin{array}{rl}
&E\left[\displaystyle\int_{0}^{\tau_{x,v}\wedge\varepsilon}e^{\theta s}\left(|Y_{s}^{1;0,x,v;\varepsilon}-Y_{s}^{2;0,x,v;\varepsilon}|^{2}
+|Z_{s}^{1;0,x,v;\varepsilon}-Z_{s}^{2;0,x,v;\varepsilon}|^{2}\right)ds\right]\medskip\\
\leq & CE\left[\displaystyle\int_{0}^{\tau_{x,v}\wedge\varepsilon}e^{\theta s}|F(X_{s}^{0,x,v},Y_{s}^{2;0,x,v;\varepsilon}, Z_{s}^{2;0,x,v;\varepsilon},v_{s})-F(x,Y_{s}^{2;0,x,v;\varepsilon}, Z_{s}^{2;0,x,v;\varepsilon},v_{s})|^{2}ds\right].
\end{array}
\]
As we know from Lemma \ref{lemma for assumption} that there is a positive $\mu>0$, we can take a positive $\theta$ in above inequality.  Moreover,
from the assumptions $(H_{1})$, $(H_{3})$ and $(H_{6})$, we have
\[
\begin{array}{rl}
&|F(X_{s}^{0,x,v},Y_{s}^{2;0,x,v;\varepsilon}, Z_{s}^{2;0,x,v;\varepsilon},v_{s})-F(x,Y_{s}^{2;0,x,v;\varepsilon}, Z_{s}^{2;0,x,v;\varepsilon},v_{s})|\medskip\\
\leq & C(1+|x|^{2})(|X_{s}^{0,x,v}-x|+|X_{s}^{0,x,v}-x|^{2})\medskip\\
\leq & C(|X_{s}^{0,x,v}-x|+|X_{s}^{0,x,v}-x|^{2}),\quad 0\leq s\leq\tau_{x,v}.
\end{array}
\]
(Recall that $\overline{D}$ is bounded). Therefore,
\[
\begin{array}{rl}
&E\left[\displaystyle\int_{0}^{\tau_{x,v}\wedge\varepsilon}\left(|Y_{s}^{1;0,x,v;\varepsilon}-Y_{s}^{2;0,x,v;\varepsilon}|^{2}
+|Z_{s}^{1;0,x,v;\varepsilon}-Z_{s}^{2;0,x,v;\varepsilon}|^{2}\right)ds\right]\medskip\\
\leq & C\varepsilon e^{\mu\varepsilon} E\left[\sup\limits_{t\in[0,\varepsilon]}(|X_{t}^{0,x,v}-x|^{2}+|X_{t}^{0,x,v}-x|^{4})\right]
\leq C\varepsilon e^{\mu\varepsilon}(\varepsilon+\varepsilon^{2}).
\end{array}
\]
Consequently, recalling that both $Y_{0}^{1;0,x,v;\varepsilon}$ and  $Y_{0}^{2;0,x,v;\varepsilon}$ are deterministic,  we have
\[
\begin{array}{rl}
&|Y_{0}^{1;0,x,v;\varepsilon}-Y_{0}^{2;0,x,v;\varepsilon}|=\left|E\left[Y_{0}^{1;0,x,v;\varepsilon}-Y_{0}^{2;0,x,v;\varepsilon}\right]\right|\medskip\\
&=\left|E\left[\displaystyle\int_{0}^{\tau_{x,v}\wedge\varepsilon}(F(X_{s}^{0,x,v},Y_{s}^{1;0,x,v;\varepsilon}, Z_{s}^{1;0,x,v;\varepsilon},v_{s})-F(x,Y_{s}^{2;0,x,v;\varepsilon}, Z_{s}^{2;0,x,v;\varepsilon},v_{s}))ds\right]\right|\medskip\\
&\leq CE\left[\displaystyle\int_{0}^{\tau_{x,v}\wedge\varepsilon}\left(|X_{s}^{0,x,v}-x|+|X_{s}^{0,x,v}-x|^{2}\right)ds\right]\medskip\\
&\qquad\qquad+CE\left[\displaystyle\int_{0}^{\tau_{x,v}\wedge\varepsilon}\left(|Y_{s}^{1;0,x,v;\varepsilon}- Y_{s}^{2;0,x,v;\varepsilon}|+|Z_{s}^{1;0,x,v;\varepsilon}-Z_{s}^{2;0,x,v;\varepsilon}|\right)ds\right]\medskip\\
&\leq C\varepsilon(\varepsilon^{\frac{1}{2}}+\varepsilon)
+C\varepsilon^{\frac{1}{2}}\left\{E\left[\displaystyle\int_{0}^{\tau_{x,v}\wedge\varepsilon}\left(|Y_{s}^{1;0,x,v;\varepsilon}- Y_{s}^{2;0,x,v;\varepsilon}|^{2}+|Z_{s}^{1;0,x,v;\varepsilon}-Z_{s}^{2;0,x,v;\varepsilon}|^{2}\right)ds\right]\right\}^{\frac{1}{2}}\medskip\\
&\leq C\varepsilon^{\frac{3}{2}}.
\end{array}
\]
Now we are going to prove (\ref{estimate for Y2}). For this end, we apply It\^{o}'s formula to $|Y_{s}^{2;0,x,v;\varepsilon}|^{2}$. Recalling that $F(x,\cdot,\cdot,v)$ has a linear growth in $(y,z)$, uniformly in $(x,v)\in \overline{D}\times V$, we obtain
\[
\begin{array}{rl}
&E\left[|Y_{\tau_{x,v}\wedge s}^{2;0,x,v;\varepsilon}|^{2}+\displaystyle\int_{\tau_{x,v}\wedge s}^{\tau_{x,v}\wedge\varepsilon}|Z_{r}^{2;0,x,v;\varepsilon}|^{2}dr\right]\medskip\\
=& 2E\left[\displaystyle\int_{\tau_{x,v}\wedge s}^{\tau_{x,v}\wedge\varepsilon}Y_{r}^{2;0,x,v;\varepsilon}
F(x,Y_{r}^{2;0,x,v;\varepsilon}, Z_{r}^{2;0,x,v;\varepsilon},v_{s})dr\right]
\medskip\\
\leq & 2CE\left[\displaystyle\int_{\tau_{x,v}\wedge s}^{\tau_{x,v}\wedge\varepsilon}|Y_{r}^{2;0,x,v;\varepsilon}|
\left(1+|Y_{r}^{2;0,x,v;\varepsilon}|+|Z_{r}^{2;0,x,v;\varepsilon}|\right)dr\right]\medskip\\
\leq & CE[\tau_{x,v}\wedge(\varepsilon-s)]+CE\left[\displaystyle\int_{\tau_{x,v}\wedge s}^{\tau_{x,v}\wedge\varepsilon}|Y_{r}^{2;0,x,v;\varepsilon}|^{2}dr\right]
+\frac{1}{2}E\left[\displaystyle\int_{\tau_{x,v}\wedge s}^{\tau_{x,v}\wedge\varepsilon}|Z_{r}^{2;0,x,v;\varepsilon}|^{2}dr\right]\medskip\\
\leq & C\varepsilon+CE\left[\displaystyle\int_{\tau_{x,v}\wedge s}^{\tau_{x,v}\wedge\varepsilon}|Y_{r}^{2;0,x,v;\varepsilon}|^{2}dr
+\frac{1}{2}\displaystyle\int_{\tau_{x,v}\wedge s}^{\tau_{x,v}\wedge\varepsilon}|Z_{r}^{2;0,x,v;\varepsilon}|^{2}dr\right].
\end{array}
\]
Thus, there exists a constant $C$ independent of $\varepsilon$, such that for all $s\in[0,\varepsilon]$,
\[
\begin{array}{rl}
&E\left[|Y_{\tau_{x,v}\wedge s}^{2;0,x,v;\varepsilon}|^{2}+\displaystyle\int_{\tau_{x,v}\wedge s}^{\tau_{x,v}\wedge\varepsilon}|Z_{r}^{2;0,x,v;\varepsilon}|^{2}dr
\right]
\medskip\\
\leq & C\varepsilon+CE\left[\displaystyle\int_{\tau_{x,v}\wedge s}^{\tau_{x,v}\wedge\varepsilon}|Y_{r}^{2;0,x,v;\varepsilon}|^{2}dr\right]
\leq C\varepsilon+CE\left[\displaystyle\int_{s}^{\varepsilon}|Y_{\tau_{x,v}\wedge r}^{2;0,x,v;\varepsilon}|^{2}dr\right],
\end{array}
\]
and the Gronwall inequality yields
\[
E\left[|Y_{\tau_{x,v}\wedge s}^{2;0,x,v;\varepsilon}|^{2}+\int_{\tau_{x,v}\wedge s}^{\tau_{x,v}\wedge\varepsilon}|Z_{r}^{2;0,x,v;\varepsilon}|^{2}dr\right]\leq C\varepsilon.
\]
Then, from $(\ref{BSDE Y2})$
\[
\begin{array}{rl}
E\left[\left|Y_{\tau_{x,v}\wedge s}^{2;0,x,v;\varepsilon}\right|\right]
& \leq E\left[\displaystyle\int_{\tau_{x,v}\wedge s}^{\tau_{x,v}\wedge\varepsilon}|F(x,Y_{r}^{2;0,x,v;\varepsilon}, Z_{r}^{2;0,x,v;\varepsilon},v_{r})|dr\right]\medskip\\
& \leq CE\left[\displaystyle\int_{\tau_{x,v}\wedge s}^{\tau_{x,v}\wedge\varepsilon}
\left(1+|Y_{r}^{2;0,x,v;\varepsilon}|+|Z_{r}^{2;0,x,v;\varepsilon}|\right)dr\right] \leq C\varepsilon,\quad s\in[0,\varepsilon].
\end{array}
\]
On the other hand,
we obtain from the latter estimates
\[
\begin{array}{rl}
&E\left[\displaystyle\int_{0}^{\tau_{x,v}\wedge\varepsilon}|Z_{r}^{2;0,x,v;\varepsilon}|^{2}dr\right]
=E\left[\left|\displaystyle\int_{0}^{\tau_{x,v}\wedge\varepsilon}Z_{r}^{2;0,x,v;\varepsilon}dB_{r}\right|^{2}\right]\medskip\\
\leq &2\varepsilon E\left[\displaystyle\int_{0}^{\tau_{x,v}\wedge\varepsilon}\left|F(x,Y_{r}^{2;0,x,v;\varepsilon}, Z_{r}^{2;0,x,v;\varepsilon},v_{r})\right|^{2}dr\right]+2|Y_{0}^{2;0,x,v;\varepsilon}|^{2}\medskip\\
\leq &C\varepsilon E\left[\displaystyle\int_{0}^{\tau_{x,v}\wedge\varepsilon}\left(1+|Y_{r}^{2;0,x,v;\varepsilon}|+ |Z_{r}^{2;0,x,v;\varepsilon}|\right)^{2}dr\right]+2C\varepsilon^{2}\leq C\varepsilon^{2}.
\end{array}
\]
Therefore,
\[
E\int_{0}^{\tau_{x,v}\wedge\varepsilon}\left(|Y_{s}^{2;0,x,v;\varepsilon}|+|Z_{s}^{2;0,x,v;\varepsilon}|\right)ds
\leq C\varepsilon^{2}
+\varepsilon^{\frac{1}{2}}\left\{E\left[\int_{0}^{\tau_{x,v}\wedge\varepsilon}|Z_{r}^{2;0,x,v;\varepsilon}|^{2}dr\right]\right\}^{\frac{1}{2}}
\leq C\varepsilon^{\frac{3}{2}}.
\]
\hfill
\end{proof}
\medskip

Now we define
\[
F_{0}(x,y,z):=\inf\limits_{v\in V}F(x,y,z,v).
\]
With $(H_{1})$-$(H_{6})$ we can check that $F(x,y,z,v)$ is Lipschitz continuous in $x,y,z$, uniformly w.r.t. $v$ (we denote the Lipschitz constant by $L_{0}$). Moreover, for arbitrary $v\in V$,
\[
\begin{array}{rl}
F(x,y,z,v) &\ge F(x,0,0,v)-L_{0}|y|-L_{0}|z|\medskip\\
&\ge\inf\limits_{v\in V}F(x,0,0,v)-L_{0}|y|-L_{0}|z|\medskip\\
& =F_{0}(x,0,0)-L_{0}|y|-L_{0}|z|.
\end{array}
\]
Let us consider the following BSDE
\begin{equation}\label{BSDE Y3}
\left\{
\begin{array}
[c]{l}
-dY_{s}^{3;0,x,v}=\left(F_{0}(x,0,0)-L_{0}|Y_{s}^{3;0,x,v}|-L_{0}|Z_{s}^{3;0,x,v}|\right)ds-Z_{s}^{3;0,x,v}dB_{s}, \quad 0\leq s\leq\tau_{x,v}\wedge\varepsilon, \medskip\\
Y_{\tau_{x,v}\wedge\varepsilon}^{3;0,x,v}=0.
\end{array}
\right.
\end{equation}
By setting $Y_{s}^{3;0,x,v}=0$, $Z_{s}^{3;0,x,v}=0$, for $s\in[\tau_{x,v}\wedge\varepsilon,\varepsilon]$,
we have that (\ref{BSDE Y3}) is equivalent to the following BSDE
\begin{equation}\label{BSDE Y3 2}
\left\{
\begin{array}
[c]{l}
-dY_{s}^{3;0,x,v}=1_{\{s\leq\tau_{x,v}\wedge\varepsilon\}}
\left(F_{0}(x,0,0)-L_{0}|Y_{s}^{3;0,x,v}|-L_{0}|Z_{s}^{3;0,x,v}|\right)ds
-Z_{s}^{3;0,x,v}dB_{s},  ~s\in[0,\varepsilon],\medskip\\
Y_{\varepsilon}^{3;0,x,v}=0.
\end{array}
\right.
\end{equation}
We need the following lemma
\begin{lemma}\label{lemma 3 for proving viscosity solution}
Under the assumptions $(H_{1})$-$(H_{6})$ we have
\begin{equation}\label{estimate between Y2 and Y3}
Y_{s}^{3;0,x,v}\leq Y_{s}^{2;0,x,v;\varepsilon}, \quad \text{ for all } s\in[0,\tau_{x,v}\wedge\varepsilon],~v\in\mathcal{V}, ~\mathbb{P}\text{-}a.s.
\end{equation}
Moreover, for $x\in \overline{D}\setminus\partial D$, there exists a constant $C$ independent of $\varepsilon$ and $v$ such that
\begin{equation}\label{estimate between Y3 and Y4}
|Y_{0}^{3;0,x,v}-Y_{0}^{4;0,x}|\leq C\varepsilon^{\frac{3}{2}},
\end{equation}
where $Y_{s}^{4;0,x}$ is the solution of the following ordinary differential equation
\begin{equation}\label{ODE}
\left\{
\begin{array}
[c]{l}
-dY_{s}^{4;0,x}=\left(F_{0}(x,0,0)-L_{0}|Y_{s}^{4;0,x}|\right)ds,\quad s\in[0,\varepsilon],\medskip\\
Y_{\varepsilon}^{4;0,x}=0.
\end{array}
\right.
\end{equation}
\end{lemma}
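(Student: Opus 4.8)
The plan is to treat the two assertions separately. For the comparison inequality (\ref{estimate between Y2 and Y3}), the decisive observation is the pointwise lower bound $F(x,y,z,v)\ge F_{0}(x,0,0)-L_{0}|y|-L_{0}|z|$ established just above the lemma, valid for every $v\in V$. Hence, for any fixed $v\in\mathcal{V}$, the driver of BSDE (\ref{BSDE Y2}) dominates that of BSDE (\ref{BSDE Y3}) pointwise in $(y,z)$, while both equations carry the same random terminal time $\tau_{x,v}\wedge\varepsilon$ and the same terminal value $0$. Since $\tau_{x,v}\wedge\varepsilon\le\varepsilon\le1$ is a bounded stopping time, both drivers satisfy the hypotheses of Lemma \ref{Wellposedness Darling and Pardoux} with a common pair $(\alpha,\beta)=(-L_{0},L_{0})$ and with $\mu$ chosen as large as needed (note $Ee^{\mu(\tau_{x,v}\wedge\varepsilon)}\le e^{\mu\varepsilon}<\infty$ for every $\mu$). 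The comparison theorem, Lemma \ref{comparison theorem}, then applies directly and gives $Y_{s}^{3;0,x,v}\le Y_{s}^{2;0,x,v;\varepsilon}$ on $[0,\tau_{x,v}\wedge\varepsilon]$, $\mathbb{P}$-a.s.

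For (\ref{estimate between Y3 and Y4}) I would first record two preliminary estimates. On the one hand, $Y^{4;0,x}$ solves the deterministic ODE (\ref{ODE}) with bounded generator and terminal value $0$ over $[0,\varepsilon]$, so a backward Gronwall argument yields $|Y_{s}^{4;0,x}|\le C\varepsilon$ for all $s\in[0,\varepsilon]$, whence $\int_{0}^{\varepsilon}|Y_{r}^{4;0,x}|\,dr\le C\varepsilon^{2}$. On the other hand, the driver of (\ref{BSDE Y3}) has exactly the same linear-growth structure in $(y,z)$ as that of (\ref{BSDE Y2}); re-running the proof of Lemma \ref{lemma 2 for proving viscosity solution} verbatim yields the analogue of (\ref{estimate for Y2}), namely $E[\int_{0}^{\tau_{x,v}\wedge\varepsilon}(|Y_{r}^{3;0,x,v}|+|Z_{r}^{3;0,x,v}|)\,dr]\le C\varepsilon^{3/2}$.

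Next I would pass to the equivalent form (\ref{BSDE Y3 2}) on the fixed interval $[0,\varepsilon]$ and take expectations at time $0$; since $Y_{0}^{3;0,x,v}$ and $Y_{0}^{4;0,x}$ are deterministic and the stochastic integral is a true martingale on the bounded horizon, this gives
\[
Y_{0}^{3;0,x,v}-Y_{0}^{4;0,x}
=E\Big[\int_{0}^{\varepsilon}\big(1_{\{r\le\tau_{x,v}\wedge\varepsilon\}}(F_{0}(x,0,0)-L_{0}|Y_{r}^{3;0,x,v}|-L_{0}|Z_{r}^{3;0,x,v}|)-(F_{0}(x,0,0)-L_{0}|Y_{r}^{4;0,x}|)\big)\,dr\Big].
\]
Splitting the integrand over $\{r\le\tau_{x,v}\wedge\varepsilon\}$ and its complement produces three contributions: the $-L_{0}|Z^{3}|$ term, the difference $L_{0}(|Y^{4}|-|Y^{3}|)$ on $\{r\le\tau_{x,v}\wedge\varepsilon\}$, and the mismatch $-1_{\{r>\tau_{x,v}\wedge\varepsilon\}}(F_{0}(x,0,0)-L_{0}|Y^{4}|)$ produced by the indicator. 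The first two are controlled by the two preliminary estimates, using $\big||Y^{4}|-|Y^{3}|\big|\le|Y^{4}|+|Y^{3}|$ together with $\int_{0}^{\varepsilon}|Y^{4}|\le C\varepsilon^{2}$ and $E\int_{0}^{\tau_{x,v}\wedge\varepsilon}(|Y^{3}|+|Z^{3}|)\le C\varepsilon^{3/2}$; both are therefore $O(\varepsilon^{3/2})$.

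The third contribution is where I expect the main difficulty. Because $r>\tau_{x,v}\wedge\varepsilon$ forces $\tau_{x,v}<\varepsilon$, and because $|F_{0}(x,0,0)|$ and $|Y^{4}|$ are bounded (by a constant and by $C\varepsilon$ respectively), this term is bounded in absolute value by $C\varepsilon\,\mathbb{P}(\tau_{x,v}<\varepsilon)$. The crucial point is that $x\in\overline{D}\setminus\partial D$ is an \emph{interior} point, so $d_{0}:=\mathrm{dist}(x,\partial D)>0$; an exit before time $\varepsilon$ requires $\sup_{t\le\varepsilon}|X_{t}^{0,x,v}-x|\ge d_{0}$, and Markov's inequality together with the moment bound (\ref{estimate for SDE}) gives $\mathbb{P}(\tau_{x,v}<\varepsilon)\le C_{p}\varepsilon^{p/2}d_{0}^{-p}$ for every $p\ge2$. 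Choosing $p$ large (say $p=4$) makes this $O(\varepsilon^{2})$, so the third contribution is negligible compared with $\varepsilon^{3/2}$, and the resulting constant $C$ depends on $x$ through $d_{0}$ but not on $v$ or $\varepsilon$, exactly as required. Collecting the three bounds yields $|Y_{0}^{3;0,x,v}-Y_{0}^{4;0,x}|\le C\varepsilon^{3/2}$ and completes the proof.
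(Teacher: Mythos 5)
Your proof is correct, and its overall skeleton matches the paper's: the comparison inequality (\ref{estimate between Y2 and Y3}) is obtained exactly as in the paper from the pointwise driver bound $F\ge F_0(x,0,0)-L_0|y|-L_0|z|$ and Lemma \ref{comparison theorem}, and for (\ref{estimate between Y3 and Y4}) both arguments take the expectation of the difference of the two equations and isolate the indicator mismatch on $\{r>\tau_{x,v}\wedge\varepsilon\}$, which is then killed by the interior-point estimate $\mathbb{P}(\tau_{x,v}\le\varepsilon)\le \delta_0^{-4}E[\sup_{t\le\varepsilon}|X_t^{0,x,v}-x|^4]\le C\varepsilon^2$. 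Where you genuinely diverge is in the middle estimate: the paper applies It\^{o}'s formula to $|Y^{3;0,x,v}_s-Y^{4;0,x}_s|^2$ and runs a Gronwall argument to get $\sup_s|Y^3_s-Y^4_s|\le C\varepsilon$ together with $E\int_0^\varepsilon|Z^3_r|^2dr\le C\varepsilon^2$, and only then bounds the drift-difference term; you instead observe that the driver of (\ref{BSDE Y3}) has the same linear growth as that of (\ref{BSDE Y2}), so the proof of (\ref{estimate for Y2}) applies verbatim to give $E[\int_0^{\tau_{x,v}\wedge\varepsilon}(|Y^3_r|+|Z^3_r|)dr]\le C\varepsilon^{3/2}$, and then control the drift difference crudely via $\bigl||Y^4_r|-|Y^3_r|\bigr|\le|Y^4_r|+|Y^3_r|$. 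This is a legitimate simplification: it avoids the It\^{o}/Gronwall step on the difference entirely, at the cost of not producing the pointwise-in-$s$ bound $|Y^3_s-Y^4_s|\le C\varepsilon$ (which the paper's proof yields as a by-product but which is not needed for the stated conclusion). Both routes deliver the same $O(\varepsilon^{3/2})$ rate, with the constant depending on $x$ only through $\mathrm{dist}(x,\partial D)$, as the lemma permits.
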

\begin{proof}
Comparing (\ref{BSDE Y2}) and (\ref{BSDE Y3}) and using
$F_{0}(x,0,0)-L_{0}|y|-L_{0}|z|\leq F(x,y,z,v)$, for all $v\in V$,
Lemma \ref{comparison theorem} yields (\ref{estimate between Y2 and Y3}). To complete the proof, it remains to show (\ref{estimate between Y3 and Y4}).

First, one can check that the solution of (\ref{ODE}) is given by
\begin{equation}\label{solution of ODE}
Y_{s}^{4;0,x}=\left\{
\begin{array}
[c]{rl}
&\frac{1}{L_{0}}F_{0}(x,0,0)(1-e^{-L_{0}(\varepsilon-s)}),\quad F_0(x,0,0)\ge0, \quad s\in[0,\varepsilon],\medskip\\
&\frac{1}{L_{0}}F_{0}(x,0,0)(e^{L_{0}(\varepsilon-s)}-1),\quad F_0(x,0,0)<0, \quad s\in[0,\varepsilon].
\end{array}
\right.
\end{equation}
Obviously, $|Y_{s}^{4;0,x}|\leq C(\varepsilon-s)\leq C\varepsilon$,  $s\in[0,\varepsilon]$,
and
\begin{equation}\label{estimate between F0 and Y4}
\left|F_{0}(x,0,0)-L_{0}|Y_{s}^{4;0,x}|\right|=\left|F_{0}(x,0,0)e^{L_{0}(\varepsilon-s)}\right|\leq Ce^{L_{0}(\varepsilon-s)}\leq C,\quad s\in[0,\varepsilon].
\end{equation}
By applying It\^{o}'s formula to $|Y_{s}^{3;0,x,v}-Y_{s}^{4;0,x}|^{2}$, we deduce from (\ref{BSDE Y3 2}) and (\ref{ODE}), that
\[
\begin{array}{rl}
&E\left[|Y_{s}^{3;0,x,v}-Y_{s}^{4;0,x}|^{2}+\displaystyle\int_{s}^{\varepsilon}|Z_{r}^{3;0,x,v}|^{2}dr|\mathcal{F}_s\right]\medskip\\
=& -2L_{0}E\left[\displaystyle\int_{s}^{\tau_{x,v}\wedge\varepsilon}(Y_{r}^{3;0,x,v}-Y_{r}^{4;0,x})
(|Y_{r}^{3;0,x,v}|-|Y_{r}^{4;0,x}|+|Z_{r}^{3;0,x,v}|)dr|\mathcal{F}_s\right]\medskip\\
&+2E\left[\displaystyle\int_{\tau_{x,v}\wedge\varepsilon}^{\varepsilon}Y_{r}^{4;0,x}(F_{0}(x,0,0)-L_{0}|Y_{r}^{4;0,x}|)dr|\mathcal{F}_s\right]
\medskip\\
\leq & 2(L_{0}+L_{0}^{2})E\left[\displaystyle\int_{s}^{\varepsilon}|Y_{r}^{3;0,x,v}-Y_{r}^{4;0,x}|^{2}dr|\mathcal{F}_s\right]
+\frac{1}{2}E\left[\displaystyle\int_{s}^{\varepsilon}|Z_{r}^{3;0,x,v}|^{2}dr|\mathcal{F}_s\right]+2C^{2}\varepsilon^{2}.
\end{array}
\]
Thus, there exists a constant $C$ independent of $\varepsilon$ and $v$, such that
\[
\begin{array}{rl}
&E\left[|Y_{s}^{3;0,x,v}-Y_{s}^{4;0,x}|^{2}|\mathcal{F}_s\right]
+\frac{1}{2}E\left[\displaystyle\int_{s}^{\varepsilon}|Z_{r}^{3;0,x,v}|^{2}dr|\mathcal{F}_s\right]
\medskip\\
\leq & 2(L_{0}+L_{0}^{2})E\left[\displaystyle\int_{s}^{\varepsilon}|Y_{r}^{3;0,x,v}-Y_{r}^{4;0,x}|^{2}dr|\mathcal{F}_s\right]
+C\varepsilon^{2},
\end{array}
\]
and the Gronwall inequality yields
\[
E\left[|Y_{s}^{3;0,x,v}-Y_{s}^{4;0,x}|^{2}+\int_{s}^{\varepsilon}|Z_{r}^{3;0,x,v}|^{2}dr|\mathcal{F}_s\right]\leq C\varepsilon^{2},\quad s\in[0,\varepsilon].
\]
Consequently, $|Y_{s}^{3;0,x,v}-Y_{s}^{4;0,x}|\leq C\varepsilon$, $s\in[0,\varepsilon]$, and
\[
E\left[\int_{0}^{\varepsilon}|Z_{r}^{3;0,x,v}|^{2}dr\right]\leq C\varepsilon^{2}.
\]
Using the equations (\ref{BSDE Y3 2}) and (\ref{ODE}) again, and recalling (\ref{estimate between F0 and Y4}), we have
\[
\begin{array}{ll}
\quad |Y_{0}^{3;0,x,v}-Y_{0}^{4;0,x}|=E\left[|Y_{0}^{3;0,x,v}-Y_{0}^{4;0,x}|\right]\medskip\\
\leq L_{0}E\left[\displaystyle\int_{0}^{\tau_{x,v}\wedge\varepsilon}(|Y_{r}^{3;0,x,v}-Y_{r}^{4;0,x}|+|Z_{r}^{3;0,x,v}|)dr\right]
+E\left[\displaystyle\int_{\tau_{x,v}\wedge\varepsilon}^{\varepsilon}|F_{0}(x,0,0)-L_{0}|Y_{r}^{4;0,x}||dr\right]
\medskip\\
\leq C\varepsilon E\left[\sup\limits_{s\in[0,\varepsilon]}|Y_{s}^{3;0,x,v}-Y_{s}^{4;0,x}|\right]
+C\varepsilon^{\frac{1}{2}}\left\{E\left[\displaystyle\int_{0}^{\varepsilon}|Z_{r}^{3;0,x,v}|^{2}dr\right]\right\}^{1/2}
+CE[\varepsilon-\varepsilon\wedge\tau_{x,v}]\medskip\\
\leq C\varepsilon^{\frac{3}{2}}+C\varepsilon E[1_{\{\tau_{x,v}\leq \varepsilon\}}].
\end{array}
\]
Noticing that for $x\in \overline{D}\setminus\partial D$ we can assume that there exist a $\delta_{0}>0$, such that $dist(x,\partial D)\ge\delta_{0}>0$, then from (\ref{estimate for SDE}), we have, uniformly in $v\in\mathcal{V}$,
\[
E[1_{\{\tau_{x,v}\leq \varepsilon\}}]=\mathbb{P}(\tau_{x,v}\leq \varepsilon)\leq \mathbb{P}(\sup\limits_{s\in[0,\varepsilon]}|X_{s}^{0,x,v}-x|\ge\delta_{0})\leq \frac{1}{|\delta_{0}|^{4}}E\sup\limits_{s\in[0,\varepsilon]}|X_{s}^{0,x,v}-x|^{4}\leq C\varepsilon^{2}.
\]
Consequently,
$|Y_{0}^{3;0,x,v}-Y_{0}^{4;0,x}|\leq C\varepsilon^{\frac{3}{2}}$.
\end{proof}
\begin{remark}
For $x\in \partial D$, we don't have (\ref{estimate between Y3 and Y4}). Indeed, as proved in Lemma \ref{lemma for assumption}, under assumption $(H_{4})$,
\[
\partial D=\Gamma:=\left\{x\in\partial D: \mathbb{P}(\tau_{x,v}>0)=0\right\}, \quad \text{ for all } v\in\mathcal{V}.
\]
Consequently, $\tau_{x,v}=0$, $(Y^{3;0,x,v},Z^{3;0,x,v})=(0,0)$, $s\in[0,\varepsilon]$, while $Y_{s}^{4;0,x}=\frac{1}{L_{0}}F_{0}(x,0,0)(1-e^{-L_{0}(\varepsilon-s)})$, if $F_0(x,0,0)\ge0$ and $Y_{s}^{4;0,x}=\frac{1}{L_{0}}F_{0}(x,0,0)(e^{L_{0}(\varepsilon-s)}-1)$, if $F_0(x,0,0)<0$, $s\in[0,\varepsilon]$ (see (\ref{solution of ODE})).
\end{remark}

Now we can give one of the main results of this section.
\begin{theorem}\label{theorem supersolution}
We suppose that the assumptions $(H_{1})$-$(H_{6})$ are satisfied. We also assume that $g\in W^{2,\infty}(D)$ and there exists a constant $\theta$ such that $\beta^{2}-2\alpha<\theta\leq\mu$ and $\theta< -2[\delta]^{+}$. Then the value function defined by (\ref{value function}) is a viscosity supersolution of (\ref{HJB equation}).
\end{theorem}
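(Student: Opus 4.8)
The plan is to verify the two requirements of Definition \ref{Definition of viscosity solution}(ii). For the boundary condition, I first observe that by Lemma \ref{lemma for assumption} every boundary point is regular, i.e. $\partial D=\Gamma$, so $\tau_{x,v}=0$ $\mathbb{P}$-a.s. for $x\in\partial D$; hence $Y_{0}^{0,x,v}=g(X_{0}^{0,x,v})=g(x)$ for every $v\in\mathcal{V}$, giving $u(x)=g(x)\ge g(x)$ on $\partial D$. Continuity of $u$, also required in the definition, is provided by Theorem \ref{regularity of u}. It then remains to establish the interior inequality. I argue by contradiction: fix $\varphi\in C^{3}(\overline{D})$ and an interior local minimum $x_{0}$ of $u-\varphi$; subtracting a constant from $\varphi$ I may assume $u(x_{0})=\varphi(x_{0})$ and $u\ge\varphi$ on a ball $B(x_{0},\delta)$ with $\overline{B(x_{0},\delta)}\subset D$. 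With $F$ and $F_{0}$ as defined before Lemma \ref{lemma 1 for proving viscosity solution}, one has $F_{0}(x_{0},0,0)=\inf_{v\in V}\{\mathcal{L}(x_{0},v)\varphi(x_{0})+f(x_{0},u(x_{0}),\nabla\varphi(x_{0})\sigma(x_{0},v),v)\}$, and I suppose for contradiction that $F_{0}(x_{0},0,0)=2\eta$ for some $\eta>0$.

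Next I apply the dynamic programming principle (Theorem \ref{DPP}) with the deterministic stopping time $\Theta=\varepsilon\in(0,1]$ to write $u(x_{0})=\inf_{v}G_{\tau_{x_{0},v}\wedge\varepsilon}^{0,x_{0},v}[u(X_{\tau_{x_{0},v}\wedge\varepsilon}^{0,x_{0},v})]$. On the event $A_{\varepsilon}:=\{\sup_{s\le\varepsilon}|X_{s}^{0,x_{0},v}-x_{0}|<\delta\}$ the trajectory stays in $B(x_{0},\delta)\subset D$, so $\tau_{x_{0},v}\wedge\varepsilon=\varepsilon$ and $u(X_{\varepsilon})\ge\varphi(X_{\varepsilon})$; on $A_{\varepsilon}^{c}$ the difference $u-\varphi$ is bounded on $\overline{D}$. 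Hence $u(X_{\tau_{x_{0},v}\wedge\varepsilon})\ge\varphi(X_{\tau_{x_{0},v}\wedge\varepsilon})-C\mathbf{1}_{A_{\varepsilon}^{c}}$. Since $x_{0}$ is interior, estimate (\ref{estimate for SDE}) with a sufficiently high moment $p$ yields $\mathbb{P}(A_{\varepsilon}^{c})\le C_{p}\delta^{-p}\varepsilon^{p/2}$, and choosing $p\ge6$ gives $\mathbb{P}(A_{\varepsilon}^{c})^{1/2}=O(\varepsilon^{3/2})$. Combining the comparison theorem (Lemma \ref{comparison theorem}) with the stability estimate (Lemma \ref{Stability w.r.t. perturbation}), and using that $\theta<0$ so that $e^{\theta(\tau_{x_{0},v}\wedge\varepsilon)/2}\le1$, the indicator perturbation contributes only $O(\varepsilon^{3/2})$ to the initial value, so that $u(x_{0})\ge\inf_{v}G_{\tau_{x_{0},v}\wedge\varepsilon}^{0,x_{0},v}[\varphi(X_{\tau_{x_{0},v}\wedge\varepsilon})]-C\varepsilon^{3/2}$.

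I now invoke the chain of estimates already prepared. By (\ref{estimate 2}) the last semigroup equals $Y_{0}^{1;0,x_{0},v;\varepsilon}+\varphi(x_{0})$, so, since $u(x_{0})=\varphi(x_{0})$, one gets $0\ge\inf_{v}Y_{0}^{1;0,x_{0},v;\varepsilon}-C\varepsilon^{3/2}$. Then (\ref{estimate between Y1 and Y2}), (\ref{estimate between Y2 and Y3}) and (\ref{estimate between Y3 and Y4}) give, uniformly in $v$, the bound $Y_{0}^{1;0,x_{0},v;\varepsilon}\ge Y_{0}^{2;0,x_{0},v;\varepsilon}-C\varepsilon^{3/2}\ge Y_{0}^{3;0,x_{0},v}-C\varepsilon^{3/2}\ge Y_{0}^{4;0,x_{0}}-C\varepsilon^{3/2}$, and because $Y_{0}^{4;0,x_{0}}$ does not depend on $v$, also $\inf_{v}Y_{0}^{1;0,x_{0},v;\varepsilon}\ge Y_{0}^{4;0,x_{0}}-C\varepsilon^{3/2}$. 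Finally, by (\ref{solution of ODE}) with $F_{0}(x_{0},0,0)=2\eta>0$, the ODE value satisfies $Y_{0}^{4;0,x_{0}}=\frac{2\eta}{L_{0}}(1-e^{-L_{0}\varepsilon})\ge\eta\varepsilon$ for $\varepsilon$ small. Collecting these yields $0\ge\eta\varepsilon-C\varepsilon^{3/2}$, which is strictly positive once $\varepsilon$ is small enough, a contradiction. Therefore $F_{0}(x_{0},0,0)\le0$, which is precisely the supersolution inequality of Definition \ref{Definition of viscosity solution}(ii).

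I expect the main obstacle to be the second paragraph, namely replacing $u$ by the test function $\varphi$ at the random terminal time $\tau_{x_{0},v}\wedge\varepsilon$. Since $u\ge\varphi$ holds only locally, one must control the excursions of $X^{0,x_{0},v}$ out of $B(x_{0},\delta)$, and a naive bound on $\mathbb{P}(A_{\varepsilon}^{c})$ through a fourth moment only produces an $O(\varepsilon)$ error, which is of the same order as the leading term $\eta\varepsilon$ and would break the comparison. The remedy is to exploit the boundedness of $b,\sigma$ to use a higher moment so that the perturbation is genuinely $o(\varepsilon)$; this, together with the delicate treatment of the discrepancy between $\tau_{x_{0},v}$ and the deterministic time $\varepsilon$ in Lemma \ref{lemma 3 for proving viscosity solution} (where the term $C\varepsilon\,\mathbb{E}[\mathbf{1}_{\{\tau_{x_{0},v}\le\varepsilon\}}]$ is absorbed into $O(\varepsilon^{3/2})$ using the interior distance bound), is the crux that makes the stochastic exit time compatible with Peng's backward-semigroup scheme.
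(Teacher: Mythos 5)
Your proposal is correct and follows essentially the same route as the paper's proof: the boundary identity via regularity of $\partial D$, the dynamic programming principle with $\Theta=\varepsilon$, the chain of approximations $Y^{1}\to Y^{2}\to Y^{3}\to Y^{4}$ from Lemmas \ref{lemma 1 for proving viscosity solution}--\ref{lemma 3 for proving viscosity solution}, and the explicit ODE solution (\ref{solution of ODE}) to extract $F_{0}(x_{0},0,0)\leq 0$. The only (harmless) deviations are that you keep the minimum local and control the excursion event $A_{\varepsilon}^{c}$ with a high-moment estimate where the paper simply assumes the minimum global w.l.o.g., and that you phrase the conclusion as a contradiction with $F_{0}=2\eta>0$ rather than dividing the bound $Y_{0}^{4;0,x}\leq C\varepsilon^{3/2}$ by $\varepsilon$ and letting $\varepsilon\searrow 0$.
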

\begin{proof}
Let us first check that $u(x)\ge g(x)$, for $x\in\partial D$. Indeed, we have $u(x)=g(x)$. This is because for $x\in\partial D$, from above remark, we have $\tau_{x,\upsilon}=0$, for all $v\in\mathcal{V}$. Then, from the definition of the value function and the solution of the BSDE (\ref{BSDE coupled with SDE}), we have $u(x)=g(x)$.

Now we suppose that $\varphi\in C^{3}(\overline{D})$ and $u-\varphi$ achieves a local minimum (w.l.o.g. we can assume it to be a global one) at $x\in \overline{D}\setminus\partial D$. Then we have $\tau_{x,v}>0$, a.s. We may also suppose that $u(x)=\varphi(x)$, and hence $u(\bar{x})\ge\varphi(\bar{x})$, for all $\bar{x}\in\overline{ D}$.
Then, given an arbitrary $\varepsilon>0$, by the dynamic programming principle (see Theorem \ref{DPP}) it holds
\[
\varphi(x)=u(x)=\inf\limits_{v\in\mathcal{V}}G_{\tau_{x,v}\wedge\varepsilon}^{0,x,v}
[u(X_{\tau_{x,v}\wedge\varepsilon}^{0,x,v})],
\]
and from the comparison theorem for BSDEs (see Lemma \ref{comparison theorem}) and $u\ge\varphi$ on $\overline{D}$ we have
\[
\inf\limits_{v\in\mathcal{V}}\left(G_{\tau_{x,v}\wedge\varepsilon}^{0,x,v}
[\varphi(X_{\tau_{x,v}\wedge\varepsilon}^{0,x,v})]-\varphi(x)\right)
\leq\inf\limits_{v\in\mathcal{V}}G_{\tau_{x,v}\wedge\varepsilon}^{0,x,v}
[u(X_{\tau_{x,v}\wedge\varepsilon}^{0,x,v})]-\varphi(x)
=0.
\]
Hence, from Lemma \ref{lemma 1 for proving viscosity solution}, it follows that
$\inf\limits_{v\in\mathcal{V}}Y_{0}^{1;0,x,v;\varepsilon}\leq 0$,
and we can find $\widetilde{v}(\cdot)\in\mathcal{V}$ depending on $\varepsilon$ such that $Y_{0}^{1;0,x,\widetilde{v};\varepsilon}\leq \varepsilon^{\frac{3}{2}}$.
Thus, from the Lemmas \ref{lemma 2 for proving viscosity solution} and \ref{lemma 3 for proving viscosity solution} (\ref{estimate between Y2 and Y3}) we obtain
\[
Y_{0}^{3;0,x,\widetilde{v}}\leq Y_{0}^{2;0,x,\widetilde{v};\varepsilon}\leq C \varepsilon^{\frac{3}{2}},
\]
and Lemma \ref{lemma 3 for proving viscosity solution} (\ref{estimate between Y3 and Y4}) yields that
$Y_{0}^{4;0,x}\leq 2C\varepsilon^{\frac{3}{2}}$.
Using the explicit expression (\ref{solution of ODE}) for $Y_{0}^{4;0,x}$, we obtain
\[
\frac{1}{L_{0}}F_{0}(x,0,0)(1-e^{-L_{0}\varepsilon})\leq C\varepsilon^{\frac{3}{2}},   \text{ if } F_{0}(x,0,0)\ge0,
\]
and
\[
\frac{1}{L_{0}}F_{0}(x,0,0)(e^{L_{0}\varepsilon}-1)\leq C\varepsilon^{\frac{3}{2}},  \text{ if }  F_{0}(x,0,0)<0.
\]
Consequently, dividing both sides by $\varepsilon$, and taking the limit $\varepsilon\searrow0$, we get always
\[
F_{0}(x,0,0)=\inf\limits_{v\in V}F(x,0,0,v)\leq 0.
\]
Recalling the definition of $F$, we see that the latter relation is nothing else than
\[
\inf\limits_{v\in V}\left\{\mathcal{L}(x,v)\varphi(x)+f(x,u(x),\nabla \varphi(x) \sigma(x,v),v)\right\}\leq 0,
\quad x\in \overline{D}\setminus\partial D.
\]
We complete the proof.
\end{proof}

Now we are going to show that $u$ is a viscosity subsolution.
\begin{theorem}\label{theorem subsolution}
Under the assumptions of Theorem \ref{theorem supersolution}, the value function defined by (\ref{value function}) is a viscosity subsolution of (\ref{HJB equation}).
\end{theorem}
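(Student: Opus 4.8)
The plan is to mirror the proof of Theorem \ref{theorem supersolution}, systematically reversing the inequalities, with one genuine modification forced by the $\inf_{v}$ appearing in the Hamiltonian. First I would dispose of the boundary condition: for $x\in\partial D$ the Remark following Lemma \ref{lemma 3 for proving viscosity solution} gives $\tau_{x,v}=0$ for every $v\in\mathcal{V}$, whence $u(x)=g(x)$ directly from the definition of the value function, so in particular $u(x)\le g(x)$ as required by Definition \ref{Definition of viscosity solution}(i).

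For the interior inequality, fix $\varphi\in C^{3}(\overline{D})$ such that $u-\varphi$ attains a local (w.l.o.g. global) maximum at $x\in\overline{D}\setminus\partial D$, normalised so that $u(x)=\varphi(x)$ and hence $u\le\varphi$ on $\overline{D}$; note that $\tau_{x,v}>0$ a.s. here. Applying the dynamic programming principle (Theorem \ref{DPP}) with $\Theta=\varepsilon$ and then the comparison theorem (Lemma \ref{comparison theorem}) to the monotone semigroup $G^{0,x,v}_{\tau_{x,v}\wedge\varepsilon}[\cdot]$, the inequality $u\le\varphi$ yields
\[
\varphi(x)=u(x)=\inf_{v\in\mathcal{V}}G^{0,x,v}_{\tau_{x,v}\wedge\varepsilon}[u(X^{0,x,v}_{\tau_{x,v}\wedge\varepsilon})]
\le\inf_{v\in\mathcal{V}}G^{0,x,v}_{\tau_{x,v}\wedge\varepsilon}[\varphi(X^{0,x,v}_{\tau_{x,v}\wedge\varepsilon})].
\]
By Lemma \ref{lemma 1 for proving viscosity solution} the right-hand side minus $\varphi(x)$ equals $\inf_{v}Y^{1;0,x,v;\varepsilon}_{0}$, so this reads $\inf_{v\in\mathcal{V}}Y^{1;0,x,v;\varepsilon}_{0}\ge0$; equivalently, $Y^{1;0,x,v;\varepsilon}_{0}\ge0$ for \emph{every} $v\in\mathcal{V}$.

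Here is the point where the argument must deviate from the supersolution case. Since the target inequality $\inf_{v\in V}F(x,0,0,v)\ge0$ is equivalent to $F(x,0,0,v_{0})\ge0$ for every constant $v_{0}\in V$, I would fix an arbitrary $v_{0}\in V$ and specialise the above to the constant control $v\equiv v_{0}$, obtaining $Y^{1;0,x,v_{0};\varepsilon}_{0}\ge0$. Lemma \ref{lemma 2 for proving viscosity solution}, estimate (\ref{estimate between Y1 and Y2}), then gives $Y^{2;0,x,v_{0};\varepsilon}_{0}\ge-C\varepsilon^{3/2}$. Whereas the supersolution proof bounded $Y^{2}$ from \emph{below} by the auxiliary equation $Y^{3}$ built from $F_{0}=\inf_{v}F$, here I need an \emph{upper} bound, so I would introduce the companion equation with driver $F(x,0,0,v_{0})+L_{0}|y|+L_{0}|z|$ (an upper bound for $F(x,y,z,v_{0})$ by the uniform Lipschitz property) together with the ODE with driver $F(x,0,0,v_{0})+L_{0}|y|$; the analogues of (\ref{estimate between Y2 and Y3}) and (\ref{estimate between Y3 and Y4}), proved exactly as in Lemma \ref{lemma 3 for proving viscosity solution}, give $Y^{2;0,x,v_{0};\varepsilon}_{0}\le\bar Y^{4}_{0}+C\varepsilon^{3/2}$ for the explicit ODE solution $\bar Y^{4}$. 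Equivalently, one may bypass these auxiliary equations and expand directly, using (\ref{estimate for Y2}) and $|F(x,y,z,v_{0})-F(x,0,0,v_{0})|\le L_{0}(|y|+|z|)$, to obtain $Y^{2;0,x,v_{0};\varepsilon}_{0}=F(x,0,0,v_{0})\,E[\tau_{x,v_{0}}\wedge\varepsilon]+O(\varepsilon^{3/2})$.

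The conclusion then follows by combining $Y^{2;0,x,v_{0};\varepsilon}_{0}\ge-C\varepsilon^{3/2}$ with the explicit form of $\bar Y^{4}_{0}$ (or with $E[\tau_{x,v_{0}}\wedge\varepsilon]=\varepsilon+O(\varepsilon^{3})$): in both cases this forces $F(x,0,0,v_{0})\,\varepsilon\ge-C\varepsilon^{3/2}$, so dividing by $\varepsilon$ and letting $\varepsilon\searrow0$ yields $F(x,0,0,v_{0})\ge0$. As $v_{0}\in V$ is arbitrary, $\inf_{v\in V}F(x,0,0,v)\ge0$, which is precisely the subsolution inequality of Definition \ref{Definition of viscosity solution}(i). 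The main obstacle I anticipate is exactly this asymmetry: one cannot simply reuse $Y^{3},Y^{4}$, which were tailored to $F_{0}$, and must instead verify that freezing a constant control $v_{0}$ preserves all the uniform estimates — in particular the interior exit-time bound $\mathbb{P}(\tau_{x,v_{0}}\le\varepsilon)\le C\varepsilon^{2}$ coming from (\ref{estimate for SDE}), which is what turns $E[\tau_{x,v_{0}}\wedge\varepsilon]$ into $\varepsilon+O(\varepsilon^{3})$ and keeps every constant independent of $v_{0}$ and $\varepsilon$.
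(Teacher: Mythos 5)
Your proposal is correct and follows essentially the same route as the paper: dynamic programming plus the comparison theorem to get $\inf_{v}Y^{1;0,x,v;\varepsilon}_{0}\ge 0$, reduction to $Y^{2}$ via Lemma \ref{lemma 2 for proving viscosity solution}, and then a direct expansion of $Y^{2;0,x,v_0;\varepsilon}_{0}$ using (\ref{estimate for Y2}) and the interior exit-time estimate — exactly as the paper does, which likewise bypasses the $Y^{3},Y^{4}$ machinery for the subsolution half. The only cosmetic difference is that you argue directly for an arbitrary constant control $v_{0}\in V$, whereas the paper argues by contradiction with the minimizer $\bar v$ of $F(x,0,0,\cdot)$; the two are trivially equivalent.
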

\begin{proof}
For $x\in\partial D$, we have $u(x)=g(x)$.
We suppose that $\varphi\in C^{3}(\overline{D})$ and $u-\varphi$ achieves a global maximum at $x\in \overline{D}\setminus\partial D$. Then we have $\tau_{x,v}>0$, a.s. As before, we may also suppose that $u(x)=\varphi(x)$. Hence $u(\bar{x})\leq\varphi(\bar{x})$, for all $\bar{x}\in\overline{ D}$.
We have to prove that $\inf\limits_{v\in V}F(x,0,0,v)\ge 0$. Let us suppose that it's not true, i.e. there exists some constant $m_{0}$ s.t.
\begin{equation}\label{contradiction assumption subsolution}
\inf\limits_{v\in V}F(x,0,0,v)\leq -m_{0}<0.
\end{equation}
The dynamic programming principle (see Theorem \ref{DPP}) implies that
\[
\varphi(x)=u(x)=\inf\limits_{v\in\mathcal{V}}G_{\tau_{x,v}\wedge\varepsilon}^{0,x,v}
[u(X_{\tau_{x,v}\wedge\varepsilon}^{0,x,v})].
\]
Then, from the comparison theorem for BSDEs (see Lemma \ref{comparison theorem}) and $u\leq\varphi$ it follows that
\[
\inf\limits_{v\in\mathcal{V}}\left(G_{\tau_{x,v}\wedge\varepsilon}^{0,x,v}
[\varphi(X_{\tau_{x,v}\wedge\varepsilon}^{0,x,v})]-\varphi(x)\right)
\ge\inf\limits_{v\in\mathcal{V}}G_{\tau_{x,v}\wedge\varepsilon}^{0,x,v}
[u(X_{\tau_{x,v}\wedge\varepsilon}^{0,x,v})]-\varphi(x)
=0,
\]
and from Lemma \ref{lemma 1 for proving viscosity solution} we have
$Y_{0}^{1;0,x,\bar{v};\varepsilon}\ge\inf\limits_{v\in\mathcal{V}}Y_{0}^{1;0,x,v;\varepsilon}\ge 0$,
where $\bar{v}\in V$ is such that $F(x,0,0,\bar{v})=F_{0}(x,0,0)=\inf\limits_{v\in V}F(x,0,0,v)$.
From Lemma \ref{lemma 2 for proving viscosity solution} (\ref{estimate between Y1 and Y2}), we obtain
\begin{equation}\label{estimate 1 for contradiction subsolution}
Y_{0}^{2;0,x,\bar{v};\varepsilon}\ge-C \varepsilon^{\frac{3}{2}}.
\end{equation}
Taking into account that
\[
Y_{0}^{2;0,x,\bar{v};\varepsilon}=E\left[\int_{0}^{\tau_{x,\bar{v}}\wedge\varepsilon}F(x,Y_{s}^{2;0,x,\bar{v};\varepsilon}, Z_{s}^{2;0,x,\bar{v};\varepsilon},\bar{v})ds\right],
\]
we get from the Lipschitz continuity of $F$ in $(y,z)$, (\ref{contradiction assumption subsolution}) as well as Lemma \ref{lemma 2 for proving viscosity solution} (\ref{estimate for Y2})
\begin{equation}\label{estimate 2 for contradiction subsolution}
\begin{array}{rl}
Y_{0}^{2;0,x,\bar{v};\varepsilon}
& \leq E\left[\displaystyle\int_{0}^{\tau_{x,\bar{v}}\wedge\varepsilon}\left(F(x,0, 0,\bar{v})+C|Y_{s}^{2;0,x,\bar{v};\varepsilon}|+C|Z_{s}^{2;0,x,\bar{v};\varepsilon}|\right)ds\right]\medskip\\
&\leq -m_{0}E\left[\tau_{x,\bar{v}}\wedge\varepsilon\right]+C\varepsilon^{\frac{3}{2}}
\leq -m_{0}\varepsilon\mathbb{P}(\tau_{x,\bar{v}}>\varepsilon)+C\varepsilon^{\frac{3}{2}}.
\end{array}
\end{equation}
Comparing (\ref{estimate 1 for contradiction subsolution}) and (\ref{estimate 2 for contradiction subsolution}), we have
$-C\varepsilon^{\frac{3}{2}}\leq -m_{0}\varepsilon\mathbb{P}(\tau_{x,\bar{v}}>\varepsilon)+C\varepsilon^{\frac{3}{2}}$,
which implies that $-2C\varepsilon^{\frac{1}{2}}\leq -m_{0}\mathbb{P}(\tau_{x,\bar{v}}>\varepsilon)$.
Taking the limit as $\varepsilon\searrow 0$, we have $0\leq -m_{0}\mathbb{P}(\tau_{x,\bar{v}}>0)=-m_{0}$ (Recall that $x\in\overline{D}\setminus \partial D$). But this means $m_{0}\leq 0$, which is in contradiction to (\ref{contradiction assumption subsolution}).
\end{proof}
\smallskip

Combining Theorems \ref{theorem supersolution} and \ref{theorem subsolution} we have
\begin{theorem}\label{theorem solution}
We suppose that the assumptions $(H_{1})$-$(H_{6})$ are satisfied. We also assume that $g\in W^{2,\infty}(D)$ and there exists a constant $\theta$ such that $\beta^{2}-2\alpha<\theta\leq\mu$ and $\theta< -2[\delta]^{+}$. Then the value function defined by (\ref{value function}) is a viscosity solution of (\ref{HJB equation}).
\end{theorem}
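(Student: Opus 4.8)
The plan is to read off the claim directly from the definition of a viscosity solution (Definition \ref{Definition of viscosity solution}(iii)): a continuous function is a viscosity solution of (\ref{HJB equation}) precisely when it is simultaneously a viscosity subsolution and a viscosity supersolution. Since Theorem \ref{theorem supersolution} and Theorem \ref{theorem subsolution} each establish one of these two halves under exactly the hypotheses $(H_1)$-$(H_6)$, $g\in W^{2,\infty}(D)$, and the existence of $\theta$ with $\beta^2-2\alpha<\theta\le\mu$ and $\theta<-2[\delta]^+$ assumed here, the proof amounts to invoking the two results together and checking that their conclusions cover every clause of the definition.

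Concretely, I would first recall that Theorem \ref{regularity of u} already guarantees that $u$ is $1/2$-H\"{o}lder continuous on $\overline{D}$, so the continuity requirement in Definition \ref{Definition of viscosity solution} is met and $u$ is an admissible candidate. Next I would treat the boundary condition: for $x\in\partial D$ the exit time satisfies $\tau_{x,v}=0$ a.s. (this follows from $\partial D=\Gamma$ as established in Lemma \ref{lemma for assumption}), whence the definition of the cost functional and the terminal datum of BSDE (\ref{BSDE coupled with SDE}) force $u(x)=g(x)$; in particular both $u(x)\le g(x)$ and $u(x)\ge g(x)$ hold on $\partial D$, so the boundary clauses of parts (i) and (ii) are satisfied. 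Finally, for the interior differential inequalities I would simply apply Theorem \ref{theorem subsolution} to obtain $\inf_{v\in V}\{\mathcal{L}(x,v)\varphi(x)+f(x,u(x),\nabla\varphi(x)\sigma(x,v),v)\}\ge0$ at any local maximum of $u-\varphi$, and Theorem \ref{theorem supersolution} to obtain the reverse inequality $\le0$ at any local minimum, for every test function $\varphi\in C^3(\overline{D})$ (which suffices by the remark following Definition \ref{Definition of viscosity solution}).

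Since the two component statements are already proved, there is no genuinely new obstacle in assembling them; the work has been front-loaded into Theorem \ref{theorem supersolution} and Theorem \ref{theorem subsolution}. If I had to single out where the real difficulty lives, it is upstream: the random-terminal-time dynamic programming principle of Theorem \ref{DPP} (together with the essinf identity of Lemma \ref{Lemma for equivalent form of value function}) and the short-time expansion through the auxiliary BSDEs $Y^1,Y^2,Y^3$ and the ordinary differential equation $Y^4$ in Lemmas \ref{lemma 1 for proving viscosity solution}-\ref{lemma 3 for proving viscosity solution}, where the estimates of order $\varepsilon^{3/2}$ and the probability bound $\mathbb{P}(\tau_{x,v}\le\varepsilon)\le C\varepsilon^2$ for interior $x$ drive the passage to the limit $\varepsilon\searrow0$. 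Assembling the final statement, by contrast, is a direct appeal to the definition.
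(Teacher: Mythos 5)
Your proposal is correct and matches the paper exactly: the paper derives Theorem \ref{theorem solution} simply by combining Theorems \ref{theorem supersolution} and \ref{theorem subsolution} with Definition \ref{Definition of viscosity solution}(iii). Your additional remarks on continuity (Theorem \ref{regularity of u}) and the boundary identity $u=g$ on $\partial D$ are already handled inside the two component theorems, so nothing further is needed.
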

Finally, we also have the uniqueness of the viscosity solution of HJB equation (\ref{HJB equation}) in the class of $1/2$-H\"{o}lder continuous functions on $\overline{D}$.
\begin{theorem}\label{uniqueness of HJB equation}
We suppose that the assumptions $(H_{1})$-$(H_{6})$ are satisfied. Then HJB equation (\ref{HJB equation}) has at most one viscosity solution in the class of $1/2$-H\"{o}lder continuous functions on $\overline{D}$.
\end{theorem}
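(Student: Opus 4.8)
The plan is to deduce the uniqueness statement from a comparison principle: if $u_1$ is a viscosity subsolution and $u_2$ a viscosity supersolution of (\ref{HJB equation}), both $\tfrac12$-H\"older continuous on $\overline D$, then $u_1\le u_2$ on $\overline D$; the conclusion follows at once by interchanging the two solutions. Because a subsolution satisfies $u_1\le g$ and a supersolution $u_2\ge g$ on $\partial D$, we have $u_1-u_2\le 0$ on $\partial D$. Arguing by contradiction, I would assume $M:=\max_{\overline D}(u_1-u_2)>0$; then $M$ is attained at an interior point $\hat x\in D$, and the whole analysis can be localized away from $\partial D$.

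First I would double the variables. For $\varepsilon>0$ set $\Phi_\varepsilon(x,y):=u_1(x)-u_2(y)-\tfrac{|x-y|^2}{2\varepsilon}$ on $\overline D\times\overline D$ and choose a maximizer $(x_\varepsilon,y_\varepsilon)$. Comparing $\Phi_\varepsilon(x_\varepsilon,y_\varepsilon)\ge\Phi_\varepsilon(x_\varepsilon,x_\varepsilon)$ and using the $\tfrac12$-H\"older continuity of $u_2$ gives $\tfrac{|x_\varepsilon-y_\varepsilon|^2}{2\varepsilon}\le C|x_\varepsilon-y_\varepsilon|^{1/2}$, hence $|x_\varepsilon-y_\varepsilon|\le C\varepsilon^{2/3}$ and, decisively, $\tfrac{|x_\varepsilon-y_\varepsilon|^2}{\varepsilon}\to 0$. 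Standard arguments give $x_\varepsilon,y_\varepsilon\to\hat x$ and $u_1(x_\varepsilon)-u_2(y_\varepsilon)\to M$, so for $\varepsilon$ small both points are interior and $u_1(x_\varepsilon)>u_2(y_\varepsilon)$.

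Next I would apply the Crandall--Ishii theorem on sums \cite{CIL-92} to $u_1,-u_2$ and the penalization, producing symmetric matrices $X_\varepsilon,Y_\varepsilon$ with $(p_\varepsilon,X_\varepsilon)\in\overline J^{2,+}u_1(x_\varepsilon)$, $(p_\varepsilon,Y_\varepsilon)\in\overline J^{2,-}u_2(y_\varepsilon)$, where $p_\varepsilon:=(x_\varepsilon-y_\varepsilon)/\varepsilon$, and satisfying the usual estimate $\langle X_\varepsilon\xi,\xi\rangle-\langle Y_\varepsilon\eta,\eta\rangle\le\tfrac{3}{\varepsilon}|\xi-\eta|^2$. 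Since $V$ is compact and the Hamiltonian continuous, the supersolution inequality is attained at some $v_\varepsilon\in V$, while the subsolution inequality, being an infimum that is $\ge 0$, holds for this same $v_\varepsilon$; subtracting yields
\begin{align*}
0\le{}&\tfrac12\mathrm{Tr}\big[\sigma\sigma^*(x_\varepsilon,v_\varepsilon)X_\varepsilon-\sigma\sigma^*(y_\varepsilon,v_\varepsilon)Y_\varepsilon\big]+\big(b(x_\varepsilon,v_\varepsilon)-b(y_\varepsilon,v_\varepsilon)\big)\cdot p_\varepsilon\\
&+f\big(x_\varepsilon,u_1(x_\varepsilon),p_\varepsilon\sigma(x_\varepsilon,v_\varepsilon),v_\varepsilon\big)-f\big(y_\varepsilon,u_2(y_\varepsilon),p_\varepsilon\sigma(y_\varepsilon,v_\varepsilon),v_\varepsilon\big).
\end{align*}
The matrix estimate together with $|\sigma(x_\varepsilon,v_\varepsilon)-\sigma(y_\varepsilon,v_\varepsilon)|\le L|x_\varepsilon-y_\varepsilon|$ from $(H_1)$ bounds the second-order term by $\tfrac{3L^2}{2\varepsilon}|x_\varepsilon-y_\varepsilon|^2\to0$; the drift term is $\le L\tfrac{|x_\varepsilon-y_\varepsilon|^2}{\varepsilon}\to0$; splitting the $f$-difference and using the $\beta$-Lipschitz continuity in $(x,z)$ of $(H_3)(ii)$ with $|p_\varepsilon\sigma(x_\varepsilon,v_\varepsilon)-p_\varepsilon\sigma(y_\varepsilon,v_\varepsilon)|\le L\tfrac{|x_\varepsilon-y_\varepsilon|^2}{\varepsilon}$ makes the $x$- and $z$-contributions vanish. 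The surviving term $f(y_\varepsilon,u_1(x_\varepsilon),\cdot,v_\varepsilon)-f(y_\varepsilon,u_2(y_\varepsilon),\cdot,v_\varepsilon)$ is controlled by the monotonicity $(H_3)(iii)$: as $u_1(x_\varepsilon)>u_2(y_\varepsilon)$ it is $\le-\alpha\big(u_1(x_\varepsilon)-u_2(y_\varepsilon)\big)$. Passing to the limit leaves $\alpha M\le0$, the desired contradiction.

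I expect the main obstacle to be, as always in second-order comparison, the diffusion term $\mathrm{Tr}[\sigma\sigma^*(x_\varepsilon,\cdot)X_\varepsilon-\sigma\sigma^*(y_\varepsilon,\cdot)Y_\varepsilon]$ with $\sigma$ only Lipschitz in $x$: it is precisely the theorem on sums that converts the Lipschitz bound into the factor $\tfrac1\varepsilon|x_\varepsilon-y_\varepsilon|^2$, and precisely the $\tfrac12$-H\"older regularity of the competing solutions that forces this factor to zero, which is exactly why uniqueness is asserted in the $\tfrac12$-H\"older class. A secondary point requiring care is the sign of the zeroth-order monotonicity: the argument closes directly once this term is proper, i.e. for strictly dissipative $f$ ($\alpha>0$), which is the decisive case that assumptions $(H_3)(iii)$--$(H_5)$ are designed to encode; in the degenerate case one would first pass to a strict supersolution built from the non-degeneracy barrier of Lemma \ref{lemma for assumption}, whose $\mathcal L$-image is uniformly negative, before taking $\varepsilon\to0$.
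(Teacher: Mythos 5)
Your overall strategy --- reduce uniqueness to the comparison of a $1/2$-H\"older subsolution with a $1/2$-H\"older supersolution, double the variables, apply the Crandall--Ishii theorem on sums, and use the $1/2$-H\"older regularity to force the penalization term $|x_\varepsilon-y_\varepsilon|^2/\varepsilon$ to zero --- is exactly the route the paper takes, except that the paper compresses the entire argument into the single observation that $\overline{F}:=-\inf_{v\in V}\{\mathcal{L}(x,v)u+f(x,u,\nabla u\,\sigma(x,v),v)\}$ satisfies the hypotheses of Theorem 3.3 of \cite{CIL-92}. Your hand verification of the structure condition (the trace estimate via $|\sigma(x_\varepsilon,v)-\sigma(y_\varepsilon,v)|\le L|x_\varepsilon-y_\varepsilon|$, the drift term, and the $(x,z)$-dependence of $f$ via $(H_3)(ii)$) is correct, as is the selection of $v_\varepsilon$ from the supersolution inequality by compactness of $V$ and the reuse of that same $v_\varepsilon$ in the subsolution inequality; you have also respected the paper's reversed sign convention (subsolution: $\inf_v\{\cdots\}\ge0$ at maxima of $u-\varphi$).

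The genuine gap is the zeroth-order term. Assumption $(H_3)(iii)$ explicitly allows $\alpha\le 0$ (only $(H_5)$, i.e. $\mu>\beta^2-2\alpha$, constrains it), so your main chain of inequalities terminates in $\alpha M\le 0$, which is no contradiction unless $\alpha>0$. You acknowledge this and sketch the standard remedy --- perturb $u_2$ by a strict supersolution built from a barrier as in Lemma \ref{lemma for assumption} --- but the sketch as stated is insufficient: replacing $u_2$ by $u_2+\delta\chi$ changes not only $\mathcal{L}u_2$ but also the arguments $u$ and $\nabla u\,\sigma$ of $f$, so strictness requires a barrier $\chi\ge0$ on $\overline{D}$ satisfying $\mathcal{L}(x,v)\chi+\tilde{L}\chi+\beta|\nabla\chi\,\sigma(x,v)|\le -c<0$ uniformly on $\overline{D}\times V$, with $\tilde{L}$ the constant of $(H_6)$ and $\beta$ that of $(H_3)(ii)$. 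This is achievable from the non-degeneracy $(H_4)(1)$ and the boundedness of $D$ by taking $k$ large in an exponential barrier, but it must be written out, together with the checks that $u_2+\delta\chi\ge g$ on $\partial D$ and that the maxima of $u_1-u_2-\delta\chi$ converge to $M$ as $\delta\searrow 0$. (To be fair, the paper's own one-line appeal to Theorem 3.3 of \cite{CIL-92} glosses over the same point, since that theorem in its basic form assumes properness in $u$; your write-up at least makes the issue visible.)
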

\begin{proof}
To prove the theorem, it is sufficient to show that if $u_{1}$ (resp. $u_{2}$) is a $1/2$-H\"{o}lder continuous subsolution (resp. supersulotion), then $u_{1}\leq u_{2}$ for all $x\in \overline{D}$. One can check easily that under assumptions $(H_{1})$-$(H_{6})$,
\[
\overline{F}:=-\inf\limits_{v\in V}\left\{\mathcal{L}(x,v)u(x)+f(x,u(x),\nabla u(x) \sigma(x,v),v)\right\}
\]
satisfies the assumptions of Theorem 3.3 \cite{CIL-92}. The proof is complete.
\end{proof}

\end{document}